%
%

\documentclass[reqno]{amsart}
\usepackage{graphicx}

\newtheorem{lemma}{Lemma}
\newtheorem{theorem}{Theorem}
\newtheorem{proposition}{Proposition}

\newtheorem{hypothesis}{Hypothesis}
\newtheorem{remark}{Remark}
\newtheorem{definition}{Definition}

\newcommand{\be}{\begin{eqnarray}}
\newcommand{\ee}{\end{eqnarray}}
\newcommand{\bee}{\begin{eqnarray*}}
\newcommand{\eee}{\end{eqnarray*}}
\newcommand{\R}{{\mathbb R}}
\newcommand{\N}{{\mathbb N}}

\newcommand{\C}{{\mathbb C}}

\newcommand{\I}{\mbox {\sc 1}}

\newcommand{\asy}{{\it O}}

\newcommand{\case}[2]{\textstyle{\frac{#1}{#2}}}


\begin{document}

\title [Bifurcation and stability for double-well NLS] {Bifurcation and stability 
for Nonlinear Schr\"odinger equations 
with double well potential in the semiclassical limit}

\author {Reika FUKUIZUMI}

\address {Graduate School of Information Sciences, Tohoku University, Sendai 980-8579, Japan}

\email {fukuizumi@math.is.tohoku.ac.jp}

\author {Andrea SACCHETTI}

\address {Faculty of Sciences, University of Modena e Reggio Emilia, Modena, Italy}

\email {andrea.sacchetti@unimore.it}

\date {\today}

\thanks {One of us (A.S.) is very grateful to Giuseppe Mazzuoccolo for useful discussions about the Budan-Fourier theorem, and to Riccardo Adami and Hynek Kovarik for useful discussions on NLS equations with singular pointwise interactions.}

\begin {abstract} We consider the stationary solutions for a class of Schr\"odinger equations with a symmetric double-well potential and a nonlinear perturbation. \ Here, in the semiclassical limit we prove that the reduction to a finite-mode approximation give the stationary solutions, up to an exponentially small term, and that symmetry-breaking bifurcation occurs at a given value for the strength of the nonlinear term. \ The kind of bifurcation picture only depends on the non-linearity power. \ We then discuss the stability/instability properties of each branch of the stationary solutions. \ Finally, we consider an explicit one-dimensional toy model where the double well potential is given by means of a couple of attractive Dirac's delta pointwise interactions.
\end{abstract}

\maketitle

\section {Introduction} \label {Sec1}

Here, we consider the stationary solutions of the nonlinear Schr\"odinger (hereafter NLS) equations
\be
i \hbar \frac {\partial \psi }{\partial t} = H_0 \psi + \epsilon g(x) |\psi |^{2\sigma}  \psi , \quad  \| \psi (\cdot ,t) \| =1 , \label {Eq1}
\ee
where $\epsilon \in \R$ and $\| \cdot \|$ denotes the $L^2$ norm, 
\be
H_0 = - \frac {\hbar^2}{2m} \Delta + V , \ \  \ \Delta = \sum_{j=1}^d \frac {\partial^2}{\partial x_j^2} \, ,  \label {Eq2}
\ee
is the linear Hamiltonian and $g(x) |\psi |^{2\sigma} $ is a nonlinear perturbation. \ For the sake of definiteness we assume the units such that $2m=1$.

Atomic Bose-Einstein condensates (BECs) are described by means of nonlinear Schr\"odinger equations of the type (\ref {Eq1}) where $H_0$ represents the Hamiltonian of a single trapped atom and the nonlinear term $ |\psi|^{2\sigma} $, $\sigma=1,2,\ldots $, is the $(\sigma+1)$-body contact potential \cite {Kohler}. \ In fact, BECs strongly depend by interatomic forces and the binary coupling term $ |\psi|^2 \psi$ usually represents the dominant nonlinear term and equation (\ref {Eq1}) takes the form of the well-known Gross-Pitaevskii equation \cite {PitStr}.  \ Even if in most of the applications the parameter $\sigma$ takes only integer and positive values, here we take that $\sigma$ can assume non integer values too, as considered in \cite {Smerzi}. \ 
It is worth mentioning also the fact that equation (\ref {Eq1}) with nonlinearity corresponding to the power-law $|\psi |^{2\sigma}$, where the parameter $\sigma$ takes any positive real value,  is used in other contexts, including semiconductors \cite {Mihalache} and nonlinear optics \cite {Christian, Snyder,Zakharov}.

In this paper we consider the case of symmetric potentials $V$ with \emph {double well} shape; the function $g(x)$ is a bounded regular function (in the following we assume, for argument's sake, that $g(x)$ has the same symmetric properties as $V(x)$). 

If the nonlinear term is absent then the linear Hamiltonian $H_0$ has even--parity and odd-parity eigenstates: the $d$-dimensional linear Schr\"odinger equation with a symmetric double well potential has stationary states of a definite even $\varphi_+$ and odd-parity $\varphi_-$, with associate nondegenerate eigenvalues $\lambda_+ < \lambda_-$. 

However, the introduction of a nonlinear term,  which usually models in quantum mechanics an interacting many-particle system, may give rise to asymmetrical states related to spontaneous symmetry breaking phenomenon.

In the \emph {semiclassical limit} and in the \emph {two-level approximation} has been seen \cite {Sacchetti2} that the symmetric/antisymmetric stable stationary state bifurcates when the adimensional nonlinear parameter $\eta$ takes absolute value equal to the critical value
\begin{eqnarray}
\eta^\star = 2^\sigma /\sigma \, . \label {Eq3}
\end{eqnarray}
The parameter $\eta$ is associated with the coupling factor of the nonlinear perturbation by 
\begin{eqnarray}
\eta  = c\epsilon /{\omega} \label {Eq4}
\end{eqnarray}
and it is the effective nonlinear coupling factor, where $\omega$ is the (half of the) splitting between the two levels 
\begin{eqnarray}
\omega = \frac 12 (\lambda_- - \lambda_+)
\end {eqnarray}
and $c$ is a constant defined below in \S 2.2. \ In fact, in the semiclassical limit (or also for large distance between the two wells) the splitting $\omega$ is exponentially small, as $\hbar$ goes to zero. \ Furthermore, in \cite {Sacchetti2} it has been also seen that for $\sigma$ less than a critical value
\begin {eqnarray*}
\sigma_{threshold} = \frac 12 \left [ 3 + \sqrt {13} \right ]
\end {eqnarray*}
then a supercritical pitchfork bifurcation occurs; on the other hand, for $\sigma$ bigger than the critical value $\sigma_{threshold}$ a subcritical pitchfork bifurcation associated to the appearance on a couple of saddle node points occurs.

It is worth mentioning the fact that the main problem consists in proving the stability of the two-level approximation (which basically is a two-mode problem) with respect to the NLS equation (\ref {Eq1}). \ So far, the stability of the two-level approximation has been proved, in the semiclassical limit, only for times of the order of the beating period $T=2\pi \hbar/\omega$ \cite {S}, or for exponentially large times (that is of the order $e^T$) under further assumptions as proved by \cite {BaSa}. \ In fact, our previous approach was rather efficient in order to study the dynamics, but only give a partial result in order to look for the stationary solutions. \ Recently, Kirr, Kevrekidis, Shlizerman and Weinstein \cite {KKSW} has considered the stationary solution problem for the Cauchy problem (\ref {Eq1}) with $\hbar $ fixed (i.e. $\hbar =1$) in the limit of large barrier between the two wells, and in the case of cubic nonlinearities. \ In their seminal paper they make use of the Lyapunov-Schmidt reduction method to the two-level approximation equation for the stationary solutions. \ In such a way they overcome the limit of the method applied by \cite {S} for the study of the stationary solutions. \ Furthermore, they also applied the same method in order to study the orbital stability of the obtained solutions. 

In this paper we follow the ideas developed by \cite {KKSW}, adapted to the semiclassical limit and considering the case of any positive and real nonlinearity power $\sigma$, in order to study the stationary solutions of equation (\ref {Eq1}) and their stability properties as function of the nonlinearity power $\sigma$. \ In particular we are able to prove that the result obtained by \cite {Sacchetti2} for the two-level approximation, concerning the existence on the critical value $\sigma_{threshold}$, holds true for the whole Cauchy problem (\ref {Eq1}), too. \ To this end we prove the stability of the two-level approximation, when restricted to the stationary problem, and then we apply a generalization of the Budan-Fourier theorem \cite {CLLLR} in order to count all the branches associated to the stationary solutions. 

It is worth to mention the fact that the stability of the two-level approximation holds true in order to classify the stability/instability properties of the stationary solutions, too. \ In fact, stability/instability properties of the stationary solutions for the two level approximation are easily obtained since such an approximation has a finite-dimensinal Hamiltonian structure. \ On the other side, orbital stability/instability properties of the stationary solutions of the full nonlinear problem are much harder to obtain. \ However, in this paper, by making use of the methods developed by Grillakis, Shatah and Strauss \cite {G,GSS}, and succesfully applied by \cite {KKSW} for double well problems with cubic nonlinearity, we prove the equivalence between the stability/instability properties when we restrict our problem to the case of attractive nonlinearity and when we restrict our analysis to the "ground state".

There are already many studies on the existence of stationary solutions and the stability of Eq.(\ref{Eq1}) in the semiclassical limit (e.g., \cite{FlWe,G,GSS}). \ However, our aim is to understand what happens with double-well problem. \ When we consider the stationary problem with symmetric double-well and nonlinearity strength large enough, the bifurcation picture tells us that we have asymmetrical stationary solutions localized on just one well, as well as asymmetrical stationary solution delocalized between the two wells. \ The first type of solution was obtained, but the second type of solution was not considered in \cite{FlWe}, and it is identified with the multi-bump stationary solution studied in, e.g., \cite{DeFe}. \ Also it would be important to understand 
the destruction of the beating motion in the framework of the dynamics (see \cite{GMS} for related topics).

The paper is organized as follows. \ In Section \ref {Sec2} we recall some preliminary spectral results for Schr\"odinger operator with double well potential in the semiclassical limit, we introduce the main assumptions and we collect some general global well-posedness results for the Cauchy problem (\ref {Eq1}). \ In Section \ref {Sec3} we prove (Theorem \ref {Theorem1}) concerning the occurrence and the nature of spontaneous symmetry breaking phenomenon for equation (\ref {Eq1}) by applying, in the semiclassical limit, the Lyapunov-Schmidt reduction method to the two-level approximation and some results of the theory of numbers in order to count the number of solutions of a polynomial-type equation coming from the two-level approximation. \ In Section \ref {Sec4} we consider the dynamical properties of the stationary solutions of the two-level approximation, which has Hamiltonian form. \ In Section \ref {Sec5} we consider the orbital stability properties of the ground state stationary solutions. Appendix is devoted to an application of all the arguments in the previous sections to an explicit one dimensional toy model where the double well potential is given by a couple of attractive Dirac's delta interactions.

{\it Notations.} \ Hereafter, 

\begin {itemize}

\item [$\bullet$] $y = \tilde \asy (x )$, means that for any $0 < \alpha <1$ there exists a positive constant 
$C:= C_{\alpha }$ such that $|y| \le C_{\alpha } |x|$. \ Here, as usual $y = \asy (x )$ means that there exists a 
positive constant $C$ such that $|y| \le C |x| $, and $x\sim y$ means that $\lim_{\hbar \to 0 } \frac {x}{y} =C$ 
for some $C \in \R$;

\item [$\bullet$] $\| \cdot \|_p$ and $\| \cdot \|$ denote the norm of the spaces $L^p$ and $L^2$, $\langle \phi , \varphi \rangle = \int \bar \phi \varphi $ denotes the scalar product in the Hilbert space $L^2$;

\item [$\bullet$] $C$ denotes any positive constant which value is independent of $\hbar$.

\end {itemize}

\section {Main assumptions and preliminary results} \label {Sec2}

Here, we recall some preliminary results. \ Throughout the paper we always assume 
the Hypotheses below in this section.  

\subsection {Linear operator}

Here, we introduce the assumptions on the double-well potential $V$ and we collect some well known results on the linear operator $H_0$.

\begin {hypothesis} \label {Hyp1}
The potential $V (x)$ is a bounded real valued function such that:

\begin {itemize}

\item [{i.}] $V$ is a symmetric potential. \ For the sake of definiteness we can always assume that, by means of a suitable choice of the coordinates, $V$ is symmetric with respect to the spatial coordinate $x_1$, that is 
\be
[ {\mathcal S} , V ] =0 
\label {Eq6}
\ee
where 
\bee
\left [ {\mathcal S} \psi \right ] (x_1 , x_2, \ldots , x_d ) = \psi (-x_1 , x_2 , \ldots , x_d) .
\eee
Hence, the Hamiltonian $H_0$ is invariant under the space inversion: $[{\mathcal S},H_0]=0$, 

\item [{ii.}] $V \in C^\infty (\R^d )$;

\item [{iii.}] $V (x)$ admits two minima at $x=x_\pm $, where $x_- = {\mathcal S} x_+ \not= x_+$, such that 
\be
V (x) > V_{min} = V(x_\pm ) , \ \ \forall x \in \R^d , \ x\not= x_\pm  . \label {Eq7}
\ee
For the sake of simplicity, we assume also that 
\bee
\nabla V (x_\pm ) =0 \ \ \mbox { and } \ \ \mbox { Hess } V (x_\pm  ) >0 . 
\eee

\item [{iv.}] Finally we assume that the two minima are not degenerate:
\be
V^-_\infty = {\liminf}_{|x|\to \infty}  V(x)  >V_{min} \, . \label {Eq8}
\ee
\end {itemize}
\end {hypothesis}

\begin {remark}
In fact, some assumptions on $V$ may be weakened. \ In particular, the case of degenerate minima, that is $ \mbox {\rm det } [\mbox {Hess } V(x_\pm )]=0$, could be treated in a similar way; however, we don't dwell here on such details. \ Furthermore, boundedness of $V$ is assumed just for sake of definiteness: if $V$ is not bounded we could make use of the argument by \cite {BaSa} in order to prove the well-posedness of the Cauchy problem (\ref {Eq1}), under some assumptions of the behavior of the potential at infinity. \ For instance, we could assume that there exists a positive constant $0 < m \le 2$ such that for large $|x|$
\bee
C \langle x \rangle ^m \le V(x) \le C^{-1} \langle x\rangle ^m \, , \ \langle x\rangle  =  (1+|x|^2)^{1/2},  
\eee
for some $C>0$, and
\bee
\left | \partial_{x_1}^{\alpha_1} \ldots \partial_{x_d}^{\alpha_d} V (x) \right | \le C_\alpha \langle x \rangle^{m-|\alpha |}\, , \ |\alpha |= \sum_{j=1}^d \alpha_j \, ,
\eee
for any multi-index $\alpha \in \N^d$.
\end {remark} 

The operator $H_0$ formally defined by (\ref {Eq2}) admits a self-adjoint realization (still denoted by $H_0$) on $H^2 (\R^d )$ since $V$ is a bounded potential.

Let $\sigma (H_0 ) = \sigma_d \cup \sigma_{ess}$ be the spectrum of the self-adjoint operator $H_0$, where $\sigma_d$ denotes the discrete spectrum and $\sigma_{ess}$ denotes the essential spectrum. \ It follows that
\bee
\sigma_d \subset (V_{min},V_\infty^- ) 
\ \ \mbox { and } \ \ \sigma_{ess}=[V_\infty^- ,+\infty )  \, . 
\eee

Furthermore, for any $\hbar \in (0,\hbar^\star )$, for some $\hbar^\star >0$ fixed and small enough, it follows that $\sigma_d$ is not empty and, in particular, it contains two eigenvalues at least $\lambda^1_+$ and $\lambda^1_-$ where $\lambda^1_+ < \lambda^1_-$ and 
\be
\inf_{\zeta \in \sigma (H_0) \backslash \{ \lambda_{\pm}^1 \} } [\zeta - \lambda_{\pm}^1 ] \ge C \hbar \, ,  \label {Eq9}
\ee
for some positive constant $C$ independent of $\hbar$.

\begin {remark} \label {Remark2}
Actually, from Hypothesis \ref {Hyp1} and for $\hbar$ small enough in general it follows that for some $E>V_{min}$ then  
\bee
\sigma_d \cap  (V_{min},E)
\eee
is given by a sequence of couple of nondegenerate eigenvalues $\lambda_\pm^j$, $j=1,2,\ldots ,n$ where $n \sim \hbar^{-1}$, such that $\lambda^j_+ < \lambda_-^j$ and 
\be
\inf_{\zeta \in \sigma (H_0) \backslash \{ \lambda_{\pm}^j \} } \left | \zeta - \lambda_{\pm}^j \right | \ge C \hbar \,  \label {Eq10}
\ee
hold true. \ In fact, degeneracy may occur for some $j>1$ only in special cases, for instance when other symmetry properties for the potential $V$ are present (see, e.g., \cite {H}). \ Hereafter, for the sake of definiteness, we assume that degeneracy does not occur and that (\ref {Eq10}) holds true for any $j=1,2,\ldots ,n$.
\end {remark}

Let $\varphi_{\pm}^j$ be the normalized eigenvectors associated to $\lambda_{\pm}^j$, then $\varphi_{\pm}^j$ can be chosen to be real-valued functions such that 
\be
{\mathcal S} \varphi_{\pm}^j = \pm \varphi_{\pm}^j ; \label {Eq11}
\ee
Furthermore

\begin {lemma} \label {Lemma1}
The eigenvectors $\varphi_\pm^j$ belong to the space $H^2 (\R^d ) \cap L^p (\R^d)$ where 
\be
2 \le p \ 
\left \{
\begin {array}{ll}
\le +\infty  & \mbox { if } d=1 \\
< +\infty  & \mbox { if } d=2 \\
< 2d/(d-2) & \mbox { if } d>2 
\end {array}
\right. \, . \label {Eq12}
\ee
In particular, it follows that
\be
\| \nabla \varphi_\pm^j \| \le C_j \hbar^{-1/2} \ \mbox { and } \ \| \varphi_\pm^j \|_{H^2} \le C_j \hbar^{-1} \label {Eq13}
\ee
and
\be
\| \varphi_{\pm }^j \|_p  \le C_j \hbar^{- d\frac {p-2}{4 p}} \, ,  \label {Eq14}
\ee
for some positive constant $C_j$, independent on $\hbar$.
\end {lemma}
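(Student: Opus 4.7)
The plan is to establish each of the three bounds separately, using the eigenvalue equation $H_0 \varphi_\pm^j = \lambda_\pm^j \varphi_\pm^j$ together with the standard semiclassical fact that the low-lying eigenvalues cluster near $V_{min}$, specifically $\lambda_\pm^j - V_{min} = O(\hbar)$ (harmonic approximation at the non-degenerate minima). First, $H^2$ membership follows by elliptic regularity: since $V$ is bounded, $-\hbar^2 \Delta \varphi_\pm^j = (\lambda_\pm^j - V)\varphi_\pm^j \in L^2$, so $\varphi_\pm^j \in H^2(\R^d)$. Once the $H^1$ bound below is in hand, membership in $L^p$ for $p$ in the range (\ref{Eq12}) follows from the standard Sobolev embedding $H^1(\R^d) \hookrightarrow L^p(\R^d)$.

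For the gradient bound, I would take the $L^2$ inner product of the eigenvalue equation with $\varphi_\pm^j$, which gives $\hbar^2 \|\nabla \varphi_\pm^j\|^2 = (\lambda_\pm^j - V_{min}) - \langle (V-V_{min}) \varphi_\pm^j, \varphi_\pm^j\rangle$. Since $V \ge V_{min}$ pointwise by Hypothesis \ref{Hyp1}.iii, the last term is non-negative and hence $\hbar^2 \|\nabla \varphi_\pm^j\|^2 \le \lambda_\pm^j - V_{min} = O(\hbar)$, yielding $\|\nabla \varphi_\pm^j\| \le C_j \hbar^{-1/2}$.

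For the $H^2$ bound, I would rewrite the eigenvalue equation as $\hbar^2 \Delta \varphi_\pm^j = (V - \lambda_\pm^j)\varphi_\pm^j$ and reduce matters to showing $\|(V - \lambda_\pm^j)\varphi_\pm^j\| = O(\hbar)$. This is where genuine semiclassical input is needed: by Agmon estimates the eigenfunctions decay exponentially outside a fixed neighborhood of the wells, while inside an $O(\sqrt{\hbar})$-neighborhood of each well a Taylor expansion around the non-degenerate minima gives $V - V_{min} = O(\hbar)$; splitting $\int (V-\lambda_\pm^j)^2 |\varphi_\pm^j|^2\,dx$ between these two regions yields the desired $O(\hbar^2)$ bound, and hence $\|\Delta \varphi_\pm^j\| = O(\hbar^{-1})$.

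Finally, the $L^p$ estimate follows from the Gagliardo--Nirenberg interpolation inequality: for $p$ in the range (\ref{Eq12}) one has $\|\varphi_\pm^j\|_p \le C\|\varphi_\pm^j\|_2^{1-\theta} \|\nabla \varphi_\pm^j\|_2^\theta$ with $\theta = d(p-2)/(2p)$, so combining $\|\varphi_\pm^j\|_2 = 1$ with the gradient bound gives $\|\varphi_\pm^j\|_p \le C \hbar^{-\theta/2} = C \hbar^{-d(p-2)/(4p)}$. The main obstacle is the semiclassical localization underlying the $H^2$ bound---establishing $\|(V-\lambda_\pm^j)\varphi_\pm^j\| = O(\hbar)$---which requires combining Agmon-type exponential decay in the classically forbidden region with the quadratic vanishing of $V - V_{min}$ at the wells. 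The gradient bound and the $L^p$ bound are essentially elementary consequences once this input is available.
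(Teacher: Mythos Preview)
Your gradient bound and your $L^p$ bound via Gagliardo--Nirenberg coincide exactly with the paper's proof. The difference lies in the $H^2$ bound.

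The paper does \emph{not} invoke any semiclassical localization for $\|\Delta\varphi_\pm^j\|$: it simply writes $\hbar^2\|\Delta\varphi_\pm^j\|=\|(\lambda_\pm^j-V)\varphi_\pm^j\|\le C_j\|\varphi_\pm^j\|$ using only that $V$ is bounded. Taken literally this yields $\|\Delta\varphi_\pm^j\|=O(\hbar^{-2})$, so the printed argument actually delivers only $\|\varphi_\pm^j\|_{H^2}\le C_j\hbar^{-2}$ rather than the stated $\hbar^{-1}$ (this weaker bound is in fact all that is used later, since $\epsilon$ is exponentially small). Your route---exploiting the concentration of $\varphi_\pm^j$ at the wells together with the quadratic vanishing of $V-V_{min}$ there---is what is needed to obtain $\|(V-\lambda_\pm^j)\varphi_\pm^j\|=O(\hbar)$ and hence the sharp $\|\varphi_\pm^j\|_{H^2}=O(\hbar^{-1})$. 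One refinement: Agmon decay controls only the region outside a \emph{fixed} neighborhood of the wells, not outside the shrinking ball of radius $O(\sqrt\hbar)$; in the intermediate annulus $\{C\sqrt\hbar\le|x-x_\pm|\le r_0\}$ you need the harmonic-oscillator approximation $\varphi_\pm^j(x)\approx\hbar^{-d/4}\phi\bigl((x-x_\pm)/\sqrt\hbar\bigr)$ and the Gaussian tails of $\phi$ to control $\int|x-x_\pm|^4|\varphi_\pm^j|^2\,dx=O(\hbar^2)$. With that adjustment your argument is correct and in fact sharper than the paper's.
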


\begin {proof}
Indeed, $\varphi_\pm^j$ is normalized and it satisfies to the following eigenvalue equation $-\hbar^2 \Delta \varphi^j_\pm =(\lambda^j_\pm -V) \varphi^j_\pm$, from which immediately follows that 
\bee
\hbar^2 \| \nabla \varphi_\pm^j \|^2 &=& \langle (\lambda^j_\pm -V) \varphi^j_\pm, \varphi^j_\pm \rangle \\
&\le & \langle (\lambda^j_\pm -V) \varphi^j_\pm, \varphi^j_\pm \rangle_{L^2(\Omega^j_\pm)} \\ &\le & C_j \hbar \| \varphi^j_\pm \|^2 
\eee
where 
\bee
\Omega^j_\pm = \{ x \in \R^d \ | \ V(x) \le \lambda^j_\pm \}
\eee
is such that $\lambda^j_\pm -V \ge \lambda^j_\pm -V_{min} \ge C_j \hbar$ for any fixed $j$ and $\hbar$ small enough. \ Similarly
\bee
\hbar^2 \| \Delta \varphi_\pm^j \|^2 = \left \| (\lambda^j_\pm -V) \varphi^j_\pm \right \| \le C_j \| \varphi^j_\pm \| \, . 
\eee
since $V$ is a bounded potential. \ Estimate (\ref {Eq14}) follows by means of the Gagliardo-Nirenberg inequality:
\bee
\| \varphi_\pm^j \|_p \le C \| \nabla \varphi_\pm^j \|^\delta \| \varphi_\pm^j \|^{1-\delta} \le C \hbar^{-\delta/2}
\eee
where $\delta = \frac {p-2}{2p}d$.
\end {proof}

\begin {remark}
Actually, $\varphi_\pm^j \in L^p$ for any $p$ and, by means of the Riesz-Thorin interpolation Theorem, inequality (\ref {Eq14}) holds true for any $p$ independently on the dimension $d$ (see, e.g., \cite {S}). \ Indeed, by means of the semiclassical expression of $\varphi_j$ it follows that $\| \varphi_j \|_\infty \le C_j \hbar^{-d/4}$.
\end {remark}

The \emph {splitting} between the two eigenvalues 
\be
\omega^j = \case 12 (\lambda_{-}^j -\lambda_{+}^j ) \label {Eq15}
\ee
vanishes as $\hbar $ goes to zero. \ In order to give a precise estimate of the splitting $\omega^j$ we make use of the fact that $V$ is a symmetric double-well potential with non-zero barrier between the wells. \ That is, let $j$ be fixed and let
\be
\rho = \inf_\gamma \int_{\gamma} \sqrt {\left [ V (x)-V_{min} \right ]_+ }\, d x >0, \label {Eq16}
\ee
be the Agmon distance between the two wells; where $\gamma$ is any path connecting the two wells, that is $\gamma \in AC ([0,1],\R^d)$ such that $\gamma (0)=x_-$ and $\gamma (1)=x_+$, and where $[\cdot ]_+ = \max (\cdot , 0 )$. \ From standard WKB arguments (see \cite{H} for details) then it follows that the splitting is \emph {exponentially small}, that is 
\be
\omega^j = \tilde \asy ( e^{- \rho /\hbar } ) \, . \label {Eq17}
\ee

Let $\varphi_{R,L}^j$ be the normalized \emph {single well states} associated to the linear eigenstates  $\varphi^j_\pm$ by means of 
\begin{eqnarray}
\varphi_R^j ={(\varphi_+^j + \varphi_-^j )}/{\sqrt {2}} \label {Eq18}
\end {eqnarray}
and 
\begin {eqnarray} 
\varphi_L^j =  {(\varphi_+^j - \varphi_-^j )}/{\sqrt {2}},\label {Eq19}
\end{eqnarray}
They are \emph {localized on one well} in the sense that and for any $p \in [2,+\infty ]$ then 
\be
\| \varphi_{R}^j \varphi_{L}^j \|_p = \tilde \asy (  e^{-\rho /\hbar })\,  . 
\label {Eq20}
\ee
More precisely, these functions are localized on only one of the two wells in the sense that for any $r>0$ there exists $c:=c(r)>0$ such that 
\bee
\int_{D_r(x_+)} |\varphi_{R}^j (x)|^2 dx = 1 + \asy (e^{-c /\hbar }) 
\eee
and
\bee 
\int_{D_r(x_-)} |\varphi_{L}^j (x)|^2 dx = 1 + \asy (e^{-c /\hbar })  
\eee
where $D_r(x_\pm )$ is the ball with center $x_\pm$ and radius $r$. \ For such a reason we call them \emph {single-well} (normalized) states. 

\begin {remark} \label {Remark4}
In the following, for the sake of definiteness, we restrict ourselves to the couple of eigenvalues $\lambda_+^1$ and $\lambda_-^1$, corresponding to the lowest energies. \ Hereafter, we simply denote them by $\lambda_\pm$ dropping out the index $1$, and $\varphi_\pm$ denote the associated eigenvectors. \ The symmetric solution $\varphi_+$ is the first eigenfunction of $H_0$, so it is positive.  \ We remark that the existence of the stationary solutions for (\ref {Eq1}) and their dynamical stability still hold true when we consider all the unperturbed energy levels $\lambda_\pm^j$ provided that degeneracy does not occur as discussed in Remark \ref {Remark2}. 
\end {remark}

\subsection {Assumption on the non linear term}

In order to obtain some a priori estimates of the wavefunction $|\psi |^{2\sigma} \psi$ we introduce the following assumption on the nonlinearity power $\sigma$.

\begin {hypothesis} We assume that
\be
0 < \sigma < 
\left \{
\begin {array}{ll}
+ \infty & \ \mbox { if } \ d=1,\ 2 , \\
\frac {1}{d-2} & \ \mbox { if } \ d >2
\end {array}
\right. \, . \label {Eq21}
\ee
where $d$ is the spatial dimension.
\end {hypothesis}

Let
\bee
C_R = \langle \varphi_R^{\sigma +1} , g \varphi_R^{\sigma+1} \rangle  \ \mbox { and } \ C_L = \langle \varphi_L^{\sigma +1} , g \varphi_L^{\sigma+1} \rangle 
\eee
where $C_R =C_L$ because of the symmetric properties of $g$ and $V$. \ We assume also the following scaling limit.

\begin {hypothesis} \label {Hyp3} Let $\omega = \frac 12 (\lambda_- - \lambda_+)$ be the splitting (\ref {Eq15}) satisfying to the asymptotic estimate (\ref {Eq17}). \ We assume that the real-valued parameter $\epsilon$ depends on $\hbar$ in such a way
\be
|\eta |\le C \  \mbox { where } \ \eta = \frac {\epsilon c}{\omega} \, , \ \  c:= C_R = C_L ,  \label {Eq22}
\ee
for some positive constant $C$, independent of $\hbar$. \ The parameter $\eta$ plays the role of effective nonlinearity parameter. \ Hereafter, we assume that $g(x)$ has the same symmetry property (\ref {Eq6}) of the potential $V$ and it is such that $\langle \varphi_R^{\sigma +1}, g \varphi_R^{\sigma +1} \rangle \not= 0 $. \ In particular, for the sake of definiteness, let 
\be
\langle \varphi_R^{\sigma +1}, g \varphi_R^{\sigma +1} \rangle > 0 \, . \label {Eq23}
\ee
\end {hypothesis}

\subsection {Existence results in $H^1$ and conservation laws.} \label {Sec2c}

The results below follow from \cite {C} and from the a priori estimate given by \cite {S}.

\subsubsection {Local existence in $H^1$} 
Let the initial state $\psi^0 \in H^1 $, then there exists $T^\star >0$ and an unique solution $\psi (x,t) \in C([0,T^\star ),H^1) \cap C^1 ([0,T^\star), H^{-1})$ of (\ref {Eq1}), where $T^\star =+\infty $ or $\| \nabla \psi \| \to +\infty$ as $t\to T^\star -0$. 
\ Furthermore, the conservation of the norm and of the energy hold true 
for $t\in [0,T^\star]$:
\bee
\| \psi (\cdot , t) \| = \| \psi^0 (\cdot )\| 
\eee
and
\bee
\tilde {\mathcal H} (\psi (\cdot , t )) = \tilde {\mathcal H} (\psi^0 (\cdot ))
\eee
where
\bee
\tilde {\mathcal H} (\psi ) = \langle \psi , H_0 \psi \rangle + \frac {\epsilon}{\sigma +1} \langle \psi^{\sigma +1} , g \psi^{\sigma +1} \rangle 
\eee
represents the energy functional.

\subsubsection {Global existence}

The solution $\psi$ of (\ref {Eq1}) globally exists, that is $T^\star =+\infty$, provided that the state is initially prepared on the first $N$ states of the linear problem, for any $N$ fixed, and $\hbar$ is small enough. \ Indeed, this fact immediately follows from a priori estimate of the norm of the gradient of the wavefunction \cite {S}.
\begin {remark}
The solution $\psi (x,t)$ globally exists for both positive and negative values of the parameter $\epsilon$, provided that $\hbar$ is small enough and $\epsilon$ satisfies Hyp. \ref {Hyp3}. \ That is, because of the scaling assumptions, blow-up effect cannot occur.
\end {remark}

\section {Stationary solutions and bifurcation} \label {Sec3}

Since the beating period $T= \frac {2\pi \hbar}{\omega}$ plays the role of the unit of time it is convenient to introduce the \emph {slow time}
\bee
\tau = \frac {\omega t}{\hbar } \, , 
\eee
then equation (\ref {Eq1}) takes the form (here $'$ denotes the derivative with respect to $\tau $ and where, with abuse of notation, $\psi = \psi (\tau , x)$)
\be
i \omega \psi' = H_0 \psi + \epsilon g |\psi |^{2\sigma } \psi \, , \ \| \psi (\cdot , \tau ) \| =1 . 
\label {Eq24}
\ee

In order to study the stationary solution we set 
\bee
\psi (x,\tau ) = e^{-i \lambda \tau /\omega}  \psi (x), \quad \| \psi (\cdot ) \| =1 \, , \  \lambda = {\Omega}+{\omega}  E , 
\eee
where
\bee
\Omega = \frac 12 \left [ \lambda_+ + \lambda_- \right ] \, . 
\eee
As specified in Remark \ref {Remark4} we restrict ourselves, for the sake of definiteness, to the first couple of energy level $\lambda^1_\pm$, where we simply denote them by $\lambda_\pm$ dropping out the index $1$; similarly $\varphi_\pm$ denote the associated eigenvectors and $\varphi_{R,L}$ the associated single-well states.

Hence, equation (\ref {Eq24}) takes the form 
\be
\lambda \psi = H_0 \psi + \epsilon g |\psi |^{2\sigma } \psi, \quad \| \psi (\cdot ) \| =1 . 
\label {Eq25}
\ee
Now, let us set 
\be
\psi (x) = a_R \varphi_R (x)+ a_L \varphi_L (x) + \psi_c (x) \, , \label {Eq26}
\ee
where
\bee
\psi_c (x ) = \Pi_c \psi (x ) 
\eee
and
\bee
a_R = \langle \varphi_R , \psi \rangle \ \ \mbox { and } \ \ a_L = \langle \varphi_L , \psi \rangle 
\eee
are unknown complex-valued values. \ Here,
\bee
\Pi_c = \I - \Pi , \ \Pi = \left [ \langle \varphi_+ , \cdot \rangle \varphi_+ + \langle \varphi_- , \cdot \rangle \varphi_- \right ] 
\eee
denotes the projection operator onto the eigenspace orthogonal to the bi-dimensional space associated to the doublet $\{ \lambda_\pm \}$. 

Since
\be
H_0 \psi &=& a_R H_0 \varphi_R + a_L H_0 \varphi_L + H_0 \psi_c \nonumber \\ 
&=& a_R \left [ \Omega \varphi_R - \omega \varphi_L \right ] + a_L \left [ -\omega \varphi_R + \Omega \varphi_L \right ] + H_0 \psi_c \label {Eq27}
\ee
then, by substituting (\ref {Eq26}) in (\ref {Eq25}) and projecting the resulting equation onto the one-dimensional spaces spanned by the \emph {single-well} states $\varphi_R$ and $\varphi_L$, and on the space $\Pi_c L^2 (\R^d)$ it follows that (\ref {Eq25}) takes the form
\be
\left \{
\begin {array}{ll}
E  a_R = - a_L + r_R, &  r_R = r_R (a_R , a_L , \psi_c ) = \frac {\epsilon}{\omega} \langle \varphi_R , g |\psi |^{2 \sigma}  \psi \rangle \\ 
E  a_L = - a_R + r_L, & r_L = r_L (a_R , a_L , \psi_c ) = \frac {\epsilon}{\omega} \langle \varphi_L , g |\psi |^{2 \sigma} \psi \rangle \\ 
E \psi_c = \frac {1}{\omega} \left [ H_0-\Omega \right ] \psi_c + r_c, & r_c = r_c (a_R , a_L , \psi_c ) = \frac {\epsilon}{\omega} \Pi_c g |\psi |^{2 \sigma} \psi 
\end {array}
\right. \label {Eq28}
\ee
with the normalization condition
\bee
|a_R|^2 + |a_L|^2 + \| \psi_c \|^2 = 1 \, . 
\eee

\begin {remark}
Since equations (\ref {Eq28}) has stationary solutions (\ref {Eq26}) defined up to a phase term then we can always assume, for the sake of definiteness, that the stationary solution of equation (\ref {Eq25}) is such that $a_L$ is a real-valued positive constant: $a_R\in \C$ and $a_L \in \R^+$. \ Furthermore, we remark that $[H_0 , {\mathcal S}]=0$ and $[g,{\mathcal S}]=0$; hence, if $\psi$ is a stationary solution of equation (\ref {Eq25}) associated to a given value $\lambda$, then ${\mathcal S}\psi$ is a solution associated to the same level, too. 
\end {remark}

Then, collecting the results from Lemmata 3 and 4 (and the associated remarks) by \cite {S} we have the following.

\begin {lemma} \label {Lemma2}
Let $\rho$ be the Agmon distance between the two wells defined as in (\ref {Eq16}). \ It follows that 
\bee
r_{R,L} (a_R, a_L, \psi_c) = r_{R,L} (a_R, a_L, 0) + r_{R,L}^c (a_R,a_L,\psi_c) 
\eee
where

\begin {itemize}

\item [(i)] 
\be
r_{R,L} (a_R, a_L, 0)= \frac {\epsilon}{\omega} C_{R,L} |a_{R,L}|^{2\sigma} a_{R,L} + \tilde \asy (e^{-\rho /\hbar } ) \label {Eq29}
\ee
and
\be
C_{R,L} &=&  \langle \varphi_{R,L} , g |\varphi_{R,L} |^{2\sigma } \varphi_{R,L} \rangle = \langle \varphi_{R,L}^{\sigma +1}, g \varphi_{R,L}^{\sigma +1} \rangle = \asy \left ( \hbar^{-d\sigma /2} \right ); \label {Eq30}
\ee
by the symmetry assumptions it turn out that 
\bee 
C_R &=& C_L \, . 
\eee

\item [(ii)] The remainder terms are estimated as follow
\bee
|r_{R,L}^c | \le \frac {\epsilon}{\omega} C \hbar^{-d \sigma /2} \| \psi_c \|^\gamma  
\eee
where
\be
\gamma = 
\left \{
\begin {array}{ll}
1 & \ \mbox { if } \ d=1,2 \\ 
1+ (2-d)\gamma & \ \mbox { if } \ d >2
\end {array}
\right. \, . \label {Eq31}
\ee
\end {itemize}

\end {lemma}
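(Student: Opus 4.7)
My plan is to decompose $r_R$ (and symmetrically $r_L$) by writing $\psi=\psi_0+\psi_c$ with $\psi_0=a_R\varphi_R+a_L\varphi_L$, and splitting
\begin{eqnarray*}
r_R = \frac{\epsilon}{\omega}\langle\varphi_R,g|\psi_0|^{2\sigma}\psi_0\rangle + \frac{\epsilon}{\omega}\langle\varphi_R,g\bigl(|\psi|^{2\sigma}\psi-|\psi_0|^{2\sigma}\psi_0\bigr)\rangle,
\end{eqnarray*}
the first summand being $r_R(a_R,a_L,0)$ and the second being $r_R^c$. The single elementary tool that drives both parts is the pointwise inequality $\bigl||z|^{2\sigma}z-|w|^{2\sigma}w\bigr|\le C(|z|^{2\sigma}+|w|^{2\sigma})|z-w|$, valid for any $\sigma>0$.

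For part (i), I apply the above inequality once with $z=\psi_0$ and $w=a_R\varphi_R$, and once with $w=a_L\varphi_L$, obtaining
\begin{eqnarray*}
|\psi_0|^{2\sigma}\psi_0 = |a_R\varphi_R|^{2\sigma}(a_R\varphi_R)+|a_L\varphi_L|^{2\sigma}(a_L\varphi_L) + R(x),
\end{eqnarray*}
where the remainder $R(x)$ contains products involving both $\varphi_R$ and $\varphi_L$. After multiplying by $g\varphi_R$ and integrating, the two cross-type contributions $\langle\varphi_R, g R\rangle$ and $\langle\varphi_R, g|a_L\varphi_L|^{2\sigma}a_L\varphi_L\rangle$ each contain at least one factor of $\varphi_R\varphi_L$ in every $x$, so by (\ref{Eq20}) together with Lemma \ref{Lemma1} (giving uniform $L^p$ control of $\varphi_{R,L}$) they collapse to $\tilde\asy(e^{-\rho/\hbar})$. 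The surviving leading term is exactly $\frac{\epsilon}{\omega}|a_R|^{2\sigma}a_R C_R$. The claimed size $C_R=\asy(\hbar^{-d\sigma/2})$ follows from $|C_R|\le\|g\|_\infty\|\varphi_R\|_{2\sigma+2}^{2\sigma+2}$ and applying (\ref{Eq14}) with $p=2\sigma+2$.

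For part (ii), I apply the same pointwise inequality with $z=\psi$ and $w=\psi_0$, so that $z-w=\psi_c$, and bound
\begin{eqnarray*}
|r_R^c|\le C\frac{|\epsilon|}{\omega}\|g\|_\infty\int |\varphi_R|\bigl(|\psi|^{2\sigma}+|\psi_0|^{2\sigma}\bigr)|\psi_c|\,dx.
\end{eqnarray*}
A three-factor Hölder inequality, with exponents chosen in accordance with Hypothesis 2 on the range of $\sigma$, splits this into $\|\varphi_R\|_{p_1}$, $\|\psi\|_{p_2}^{2\sigma}+\|\psi_0\|_{p_2}^{2\sigma}$, and $\|\psi_c\|_{p_3}$. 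The first two factors produce together the declared power $\hbar^{-d\sigma/2}$ via (\ref{Eq14}). The third factor is handled by Gagliardo-Nirenberg interpolation between $L^2$ and $H^1$: in dimensions $d=1,2$ one may take $p_3=2$ and get $\gamma=1$; for $d>2$ Sobolev forces $p_3<2d/(d-2)$, and the interpolation exponent yields the $\gamma$ stated.

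The main obstacle is the $H^1$ control of $\psi_c$ needed to bound $\|\psi_c\|_{p_3}$ in terms of $\|\psi_c\|$ with a constant independent of $\hbar$. This must be extracted from the third equation of (\ref{Eq28}), using invertibility of $\frac{1}{\omega}(H_0-\Omega)$ on the range of $\Pi_c$ (uniformly in $\hbar$, thanks to the spectral gap (\ref{Eq9}) rescaled by $\omega$) together with Hypothesis \ref{Hyp3}; otherwise one would lose an $\hbar^{-1}$ factor and spoil the exponent $\gamma$. The bookkeeping of Hölder exponents in the regime $d>2$, where the permissible $\sigma$ is constrained, is the delicate part of the argument.
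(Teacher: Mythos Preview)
The paper does not give a self-contained proof here; it simply states that the result follows by ``collecting the results from Lemmata 3 and 4 (and the associated remarks) by \cite{S}.'' Your outline is essentially the argument carried out in \cite{S}: the decomposition $\psi=\psi_0+\psi_c$, the elementary pointwise inequality $\bigl||z|^{2\sigma}z-|w|^{2\sigma}w\bigr|\le C(|z|^{2\sigma}+|w|^{2\sigma})|z-w|$, and then H\"older plus Gagliardo--Nirenberg. Part (i) is handled exactly as you describe, and the identification $C_R=\asy(\hbar^{-d\sigma/2})$ via (\ref{Eq14}) is correct.

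One point deserves correction. You propose to extract the $H^1$ control of $\psi_c$ from the third equation of (\ref{Eq28}) via the spectral gap. That would be circular: Lemma~\ref{Lemma2} is stated for \emph{general} $(a_R,a_L,\psi_c)$ and is used as input to the fixed-point argument of Lemma~\ref{Lemma3}, not the other way around. In \cite{S} the setting is the time-dependent problem, and the required bound $\|\nabla\psi\|\le C\hbar^{-1/2}$ (hence $\|\nabla\psi_c\|\le C\hbar^{-1/2}$) is an \emph{a priori} estimate coming from conservation of energy (this is precisely the global existence input recalled in \S\ref{Sec2c} and the estimate (\ref{Eq35})). With that a priori bound in hand, Gagliardo--Nirenberg gives $\|\psi_c\|_{p_3}\le C\hbar^{-\delta/2}\|\psi_c\|^{1-\delta}$, and the bookkeeping of H\"older exponents (subject to Hypothesis~2) then produces the power $\hbar^{-d\sigma/2}$ and the exponent $\gamma$ exactly as you indicate. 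So replace ``extracted from the third equation of (\ref{Eq28})'' by ``assumed as the a priori energy bound of \cite{S}'' and your argument is complete.
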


Here we come with the existence result of stationary states for the nonlinear Schr\"odinger equation (\ref {Eq25}).

\begin {theorem} \label {Theorem1}
Let 
\be
a_R = p e^{i\theta }\, , \ a_L = q \ \mbox { and }\ z = p^2 - q^2
\label {Eq32}
\ee
where $p,q \in [0,1]$ and $ \theta \in [0, 2 \pi)$. \ Let $\hbar \in (0, \hbar^\star )$, where $\hbar^\star $ is small enough, let $\rho$ be the Agmon distance between the two wells and let $\eta$ be the effective nonlinearity defined by (\ref {Eq22}). \ Then the stationary problem (\ref {Eq28}) always has 

\begin {itemize}

\item [-] a {\bf symmetric} solution $\psi_E^s$ such that 
\bee
\theta^s = \tilde \asy (e^{-\rho \gamma /\hbar }), \ z^s =\tilde \asy (e^{-\rho \gamma /\hbar }),
\eee
associated to
\bee 
E:=  -1 + \eta \frac {1}{2^\sigma} +\tilde \asy (e^{-\rho  \gamma/\hbar }),
\eee

\item [-] an {\bf antisymmetric} solution $\psi_E^a$ such that 
\bee
\theta^a = \pi + \tilde \asy (e^{-\rho  \gamma/\hbar }), \ z^a=\tilde \asy (e^{-\rho  \gamma/\hbar }), 
\eee 
associated to
\bee
E := +1 + \eta \frac {1}{2^\sigma} + \tilde \asy (e^{-\rho  \gamma/\hbar }).
\eee

\end {itemize}

Furthermore, in the case of negative (resp. positive) $\eta $, then  asymmetrical solution $\psi_E^{as}$ corresponding to $\theta^{as} =\tilde \asy (e^{-\rho  \gamma/\hbar })$ (resp. $\theta^{as} =\pi + \tilde \asy (e^{-\rho \gamma /\hbar })$) may appear as a result of spontaneous symmetry bifurcation phenomenon. \ That is: 
\begin {itemize}

\item [-] for $\sigma \le \sigma_{threshold}$ the symmetric (resp. antisymmetric) state corresponding to $z^s =\tilde \asy (e^{-\rho  \gamma/\hbar })$ bifurcates showing a pitchfork bifurcation when the adimensional nonlinear parameter $|\eta |$ is larger than the critical value $\eta^\star$ given by (see Fig. \ref {Fig1}, panel (a))
\bee
\eta^\star = \frac {2^\sigma}{\sigma}
\eee

\item [-] for $\sigma > \sigma_{threshold}$ two couples of new {\bf  asymmetrical} stationary states appear as saddle-node bifurcations when $|\eta |$ is equal to a given value $\eta^+$ such that $\eta^+ < \eta^\star$; then, for increasing values of $|\eta |$ two branches of the solutions disappear at $|\eta | = \eta^\star$ showing a subcritical pitchfork bifurcation (see Fig. \ref {Fig1}, panel (b)). \ The critical value $\eta^+$ is given by $\eta (z^+)$ where 
\be
\eta (z) = \frac {2z}{\sqrt {1-z^2}} \left [ \left ( \frac {1+z}{2} \right )^{\sigma} - \left ( \frac {1-z}{2} \right )^{\sigma} \right ]^{-1} \label {Eq33}
\ee
and $z^+ \in (0,1)$ is the non zero solution of the equation $\eta' (z)=0$.

\end {itemize}

In all the cases, the remainder term $\psi_c$ of the stationary solutions is such that
\be
\| \psi _c \|_{H^2} = \tilde \asy (e^{-\rho /\hbar } )\, . \label {Eq34}
\ee

The critical value $\sigma_{threshold}$ is given by
\bee
\sigma_{threshold} = \frac 12 \left [ 3 + \sqrt {13} \right ]
\eee
and it is an universal value in the sense that it does not depend on the shape of the double well potential as well as on the dimension $d$.
\end {theorem}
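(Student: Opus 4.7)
The plan is to apply a Lyapunov-Schmidt reduction to (\ref{Eq28}), turning the infinite-dimensional stationary problem into a two-dimensional real system, and then count its branches by a Taylor analysis at the symmetric point supplemented by a generalized Budan-Fourier argument.

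First, I would solve the third equation of (\ref{Eq28}) for the non-resonant component $\psi_c$. Since $\omega = \tilde{\asy}(e^{-\rho/\hbar})$ while the spectral gap of $(H_0 - \Omega)$ on $\Pi_c L^2$ is at least $C\hbar$, the operator $E - \omega^{-1}(H_0 - \Omega)$ restricted to $\Pi_c L^2$ is boundedly invertible with norm $\asy(\omega/\hbar)$. Combined with the uniform bound on $r_c$ from Lemma \ref{Lemma2}, the contraction mapping principle delivers a unique $\psi_c = \psi_c(a_R,a_L)$ with $\|\psi_c\|_{H^2} = \tilde{\asy}(e^{-\rho/\hbar})$, establishing (\ref{Eq34}). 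Substituting back into the first two equations of (\ref{Eq28}) and using Lemma \ref{Lemma2}(i) reduces the problem to
\[
E a_R + a_L = \eta |a_R|^{2\sigma} a_R + \tilde{\asy}(e^{-\rho\gamma/\hbar}), \qquad E a_L + a_R = \eta |a_L|^{2\sigma} a_L + \tilde{\asy}(e^{-\rho\gamma/\hbar}),
\]
with $|a_R|^2 + |a_L|^2 = 1 + \tilde{\asy}(e^{-\rho/\hbar})$. Writing $a_R = p e^{i\theta}$, $a_L = q \ge 0$, the imaginary part of the second equation yields $p\sin\theta = \tilde{\asy}(e^{-\rho\gamma/\hbar})$, so $\theta$ sits within $\tilde{\asy}(e^{-\rho\gamma/\hbar})$ of either $0$ or $\pi$.

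Next, I would extract the three types of branches. At leading order (errors dropped), setting $u=p$, $v=q$ for $\theta=0$ and eliminating $E$ by adding and subtracting the two real equations yields either (i) $u=v$ with $u^2=1/2$, giving the symmetric branch $E = -1+\eta/2^\sigma$; (ii) $u=-v$ (obtained for $\theta=\pi$), giving the antisymmetric branch $E = 1+\eta/2^\sigma$; or (iii) $u\ne v$, in which case the consistency of the two derived expressions for $E$, together with $u^2+v^2=1$ and $z=u^2-v^2$, produces exactly the relation $\eta = \eta(z)$ of (\ref{Eq33}). The symmetric and antisymmetric branches persist for the full perturbed system by the implicit function theorem applied at these non-degenerate roots, and asymmetric branches correspond to preimages of $\eta$ under $z\mapsto \eta(z)$ on $(0,1)$, up to exponentially small corrections. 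To classify the bifurcation I would Taylor-expand (\ref{Eq33}) at $z=0$, obtaining
\[
\eta(z) = \frac{2^\sigma}{\sigma}\left[1 + \frac{1 + 3\sigma - \sigma^2}{6}\, z^2 + \asy(z^4)\right],
\]
whose quadratic coefficient vanishes precisely at $\sigma = \sigma_{threshold} = (3+\sqrt{13})/2$. For $\sigma < \sigma_{threshold}$ the coefficient is positive and $\eta(z)$ is monotone on $(0,1)$, producing a supercritical pitchfork at $\eta^\star = 2^\sigma/\sigma$; for $\sigma > \sigma_{threshold}$ it is negative and, since $\eta(z)\to +\infty$ as $z\to 1^-$, the curve must turn around at some $z^+\in(0,1)$, yielding a saddle-node there and a subcritical pitchfork at $\eta^\star$.

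The main obstacle is proving that the turning point $z^+$ is unique for every $\sigma > \sigma_{threshold}$, and likewise that the global monotonicity claim for $\sigma<\sigma_{threshold}$ follows from the local one at $z=0$. After the substitution $\zeta = ((1-z)/(1+z))^\sigma$, the equation $\eta'(z)=0$ becomes a real linear combination of a small fixed number of real powers of $\zeta$, i.e.\ a generalized polynomial, to which the Budan-Fourier-type sign-change count of \cite{CLLLR} can be applied to pin down the exact number of zeros. This is the only non-routine ingredient: steps 1--3 are standard perturbation theory once the exponentially small control on $\psi_c$ is in hand.
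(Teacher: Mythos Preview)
Your proposal is correct and follows essentially the same route as the paper: Lyapunov--Schmidt reduction for $\psi_c$ via a contraction argument (the paper's Lemma~3), reduction of the first two equations to the scalar relation $f_\pm(z,\eta)=0$, Taylor analysis of $\eta(z)$ at $z=0$ yielding $\sigma_{threshold}$, and a generalized Budan--Fourier count from \cite{CLLLR} to pin down the global branch structure (the paper's Lemma~4). One small correction on the last step: the substitution that actually works is $y=(1-z)/(1+z)$ rather than $\zeta=\bigl((1-z)/(1+z)\bigr)^\sigma$; the equation $\eta'(z)=0$ then becomes $p_\sigma(y)=y^{\sigma+2}+by^{\sigma+1}+ay^\sigma-ay^2-by-1=0$ with $a=1+2\sigma$, $b=2-2\sigma$, a generalized polynomial in $y$ with six real exponents $0,1,2,\sigma,\sigma+1,\sigma+2$, to which the sign-change bound of \cite{CLLLR} applies directly to give $N\le 5$.
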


\begin {remark} \label {Remark7}
Concerning the symmetric solution $\psi_E^s = a_R^s \varphi_R + a_L^s \varphi_L + \psi_c$ we should remark the above statement implies that the corresponding level $E$ is non-degenerate in the sense that we have only this stationary solution corresponding to such value of $E$. \ On the other side, by means of a symmetrical argument, then ${\mathcal S} \psi_E^s = a_R^s \varphi_L + a_L^s \varphi_R + {\mathcal S} \psi_c$ is a solution associated to same level $E$, too. \ Hence, $\psi_E^s$ and ${\mathcal S} \psi_E^s$ coincide, up to a phase factor. \ From this fact and from Theorem \ref {Theorem1} it turns out that $\theta^s$ and $z^s$ are {\bf exactly} zero:
\bee
\theta^s = 0 \ \mbox { and } \ z^s =0 \, .
\eee
Similarly, it follows that 
\bee
\theta^a = \pi \ \mbox { and } \ z^a =0 \, 
\eee
and 
\bee
\theta^{as} = 0 \ (\mbox {respectively } \theta^{as} = \pi )
\eee
for negative value of $\eta$ (resp. for positive value of $\eta$). \ By means of a similar argument we can also conclude that the stationary solution is, up to a phase term, a real valued function; indeed if $\psi$ is a solution associated to a given level $E$, then $\bar \psi$ is a solution associated to the same value $E$, too.
\end {remark}

\begin{remark}
Because of the technical assumptions on $\sigma$, this critical value $\sigma_{threshold}$ makes sense for the non-linear Schr\"odinger equation (\ref {Eq25}) only in dimensions 1 and 2. \ This is not the case when we restrict our analysis to the two-level approximation.
\end{remark}

\begin {remark}
From Theorem \ref {Theorem1} it appears that we have only two pictures, accordingly with the value of $\sigma$. \ In Fig. \ref {Fig1} (panel (a)) we consider the bifurcation scenario for the imbalance function $z=|a_R|^2-|a_L|^2$ appearing when $\sigma \le \sigma_{threshold}$. \ In Fig. \ref {Fig1} (panel (b)) we consider the bifurcation scenario appearing when $\sigma > \sigma_{threshold}$. \ The same picture has been previously obtained for the two-level approximation (see, e.g., \cite {Sacchetti2}) where we have taken $\psi_c =0$; in fact, $\psi_c$ is exponentially small as proved in Theorem \ref {Theorem1}.
\end {remark}

\begin{remark}
The stationary solutions $\psi := \psi_E$, associated to the level $E$, given in Theorem \ref {Theorem1} are such that 
\be
\| \nabla \psi_E \| \le C \sqrt {\Lambda} \label {Eq35}
\ee
and
\be
\| \psi_E \|_p \le C \Lambda^{d \frac {p-2}{4p}} \label {Eq36}
\ee
where $p$ satisfies condition (\ref {Eq12}) and where 
\bee
\Lambda = \frac {{\mathcal H} (\psi_E) - V_{min}}{\hbar^2} \sim \hbar^{-1}
\eee
and  
\bee
{\mathcal H} (\psi ) = \langle \psi , H_0 \psi \rangle + \frac {\epsilon}{\sigma +1} \langle \psi^{\sigma +1} , g \psi^{\sigma +1} \rangle 
\eee
is the energy functional defined on $H^1 (\R^d ) \cap L^{2(\sigma +1)} (\R^d )$. \ Indeed, estimates (\ref {Eq35}) and (\ref {Eq36}) hold true for any vector $\psi$ belonging to the space $\Pi (L^2)$ (see Theorem 2 in \cite {S}). \ The results finally follow from this fact and since $\Pi_c \psi_E = \tilde \asy (e^{-\rho/\hbar })$.
\end{remark}

\begin{figure}
\begin{center}
\includegraphics[height=5cm,width=5cm]{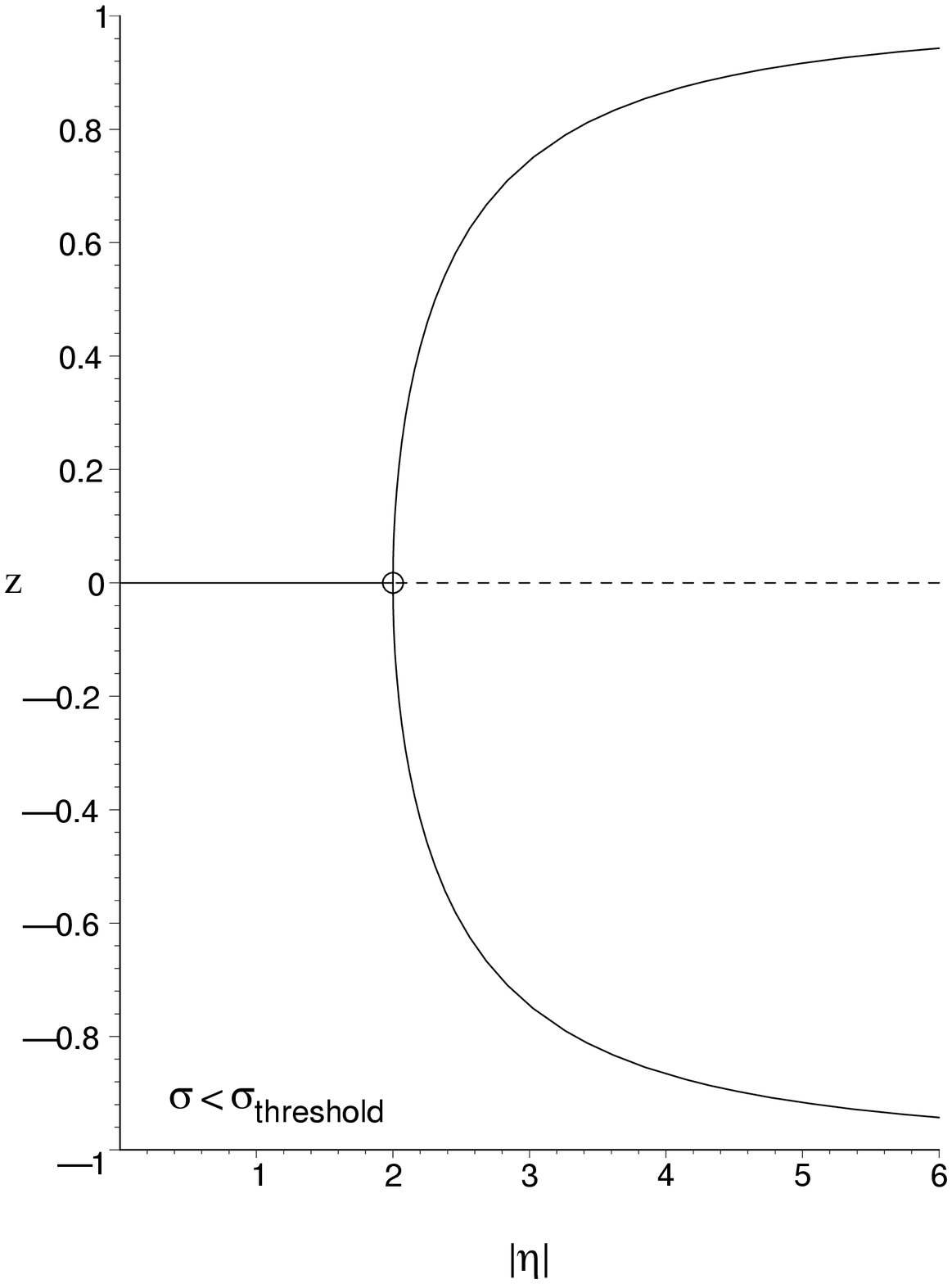}
\includegraphics[height=5cm,width=5cm]{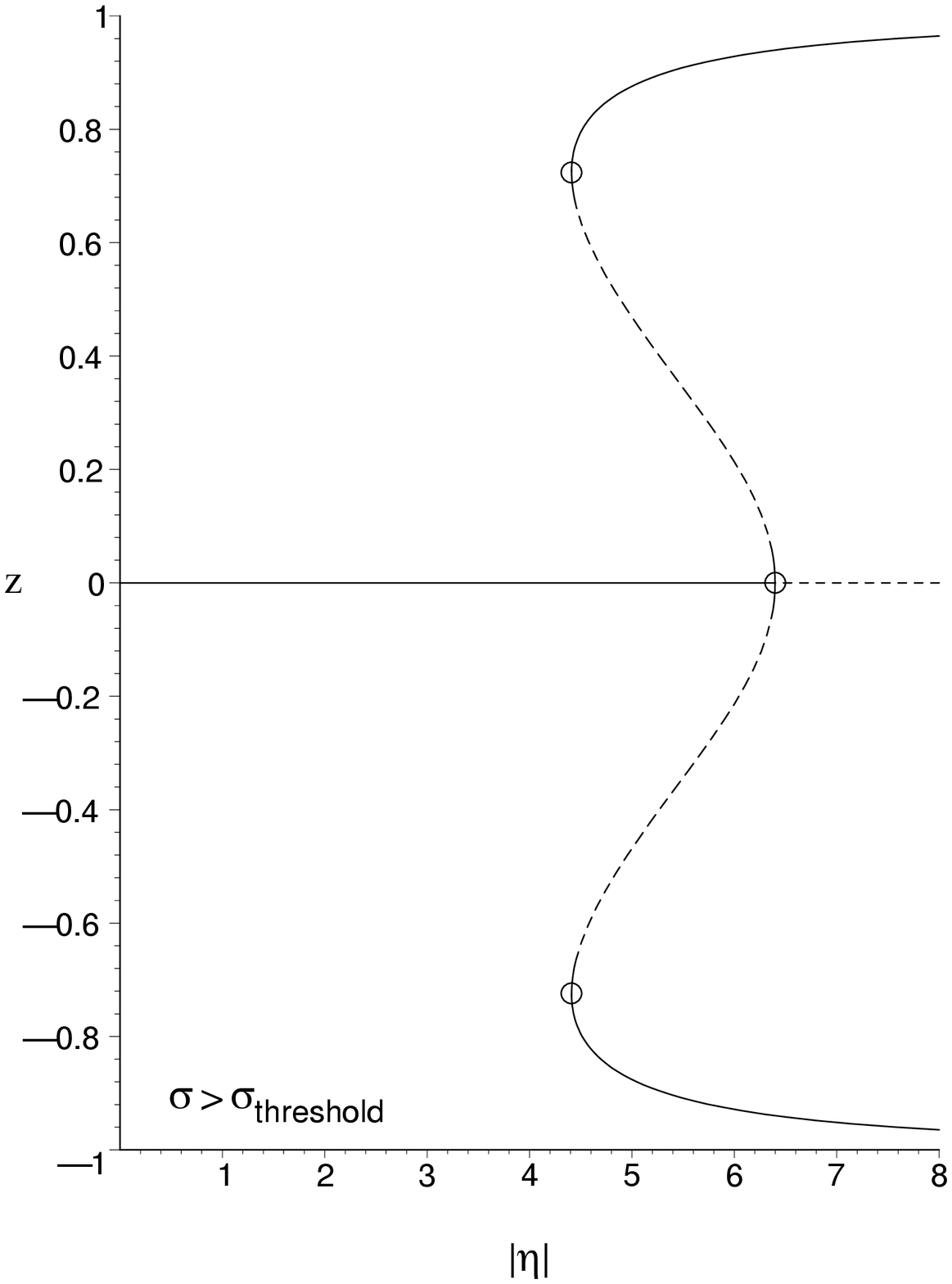}
\end{center}
\caption{\label {Fig1} In this figure we plot the graph of the stationary states of the non-linear Schr\"odinger equation (\ref {Eq25}) as function of the nonlinearity parameter $\eta$ for nonlinearity $\sigma =1 < \sigma_{threshold}$ (panel (a)) and for nonlinearity $\sigma =5 > \sigma_{threshold}$ (panel (b)); here $z = |a_R|^2 - |a_L|^2$ is the imbalance function. \ Full lines represent stable stationary states and broken lines represent unstable stationary states, where the notion of stability is referred to the dynamical stability associated to the Hamiltonian system given by the two-level approximation, as discussed in \S \ref {Sec4}; and also to orbital stability, as discussed in \S \ref {Sec5} in the case of attractive nonlinear case (i.e. $\eta <0$).}
\end{figure}

\subsection {Proof of Theorem \ref {Theorem1}}

Here, we prove the existence of the stationary solutions by making use of the Lyapunov-Schmidt method and applying some results of the theory of numbers in order to count the number of stationary solutions of the equation coming from the two-level approximation. \ In this section, for argument's sake, we take $\eta >0$; however, the same results still hold true also for $\eta <0$. 

\begin {lemma} \label {Lemma3}
We consider the following equation
\be
\left [H_0 -\Omega - \omega E \right ] \psi_c + \epsilon \Pi_c g |\psi |^{2\sigma } \psi =0 \, . 
\label {Eq37}
\ee
where the nonlinearity power $\sigma$ satisfies condition (\ref {Eq21}). \ For any fixed $C>0$ let 
\bee
D = \left \{ (a_R , a_L, E) \in \C^2 \times \R \ :\ |a_R|^2 + |a_L|^2 \le 1 , \ |\omega E |\le C \hbar^2  \right \} \, .
\eee
Then, for there exists $\hbar^\star >0$ small enough such that for any $\hbar \in (0, \hbar^\star )$ then there exists an unique solution $\psi_c \in H^2$ 
of equation (\ref {Eq37}) depending on $a_R$, $a_L$ and $E$, and such that 
\be
\max_{(a_R,a_L,E) \in D} \| \psi_c \|_{H^2} = \tilde \asy \left ( e^{-\rho /\hbar } \right )\, ,\ \mbox { as } \ \hbar \to 0 \, . \label {Eq38}
\ee
\end {lemma}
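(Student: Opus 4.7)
The plan is to recast equation (\ref{Eq37}) as a fixed-point equation for $\psi_c$ in $H^2$ and apply Banach's contraction principle on a suitable ball of exponentially small radius. The key structural input is the spectral gap (\ref{Eq9}): since $\Omega = \frac12(\lambda_+ + \lambda_-)$ lies within $\omega = \tilde{\asy}(e^{-\rho/\hbar})$ of each of $\lambda_\pm$, while the rest of $\sigma(H_0)$ is at distance $\ge C\hbar$ from $\lambda_\pm$, the operator $H_0 - \Omega - \omega E$ restricted to $\Pi_c L^2$ is invertible for $\hbar$ small (using also $|\omega E| \le C\hbar^2 \ll \hbar$) with $\|(H_0 - \Omega - \omega E)^{-1}\|_{\Pi_c L^2 \to \Pi_c L^2} \le C/\hbar$. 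Testing the equation $-\hbar^2\Delta u = (\Omega + \omega E - V)u + \Pi_c f$ and using boundedness of $V$, elliptic regularity then upgrades this to $\|u\|_{H^2} \le C\hbar^{-3}\|f\|_{L^2}$.

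Define the map $T(\psi_c) = -\epsilon (H_0 - \Omega - \omega E)^{-1}\Pi_c\,g|\psi|^{2\sigma}\psi$ with $\psi = a_R\varphi_R + a_L\varphi_L + \psi_c$; a solution of (\ref{Eq37}) is exactly a fixed point of $T$. To bound $T$, I will estimate the nonlinearity in $L^2$: for $\|\psi_c\|_{H^2} \le 1$, the Sobolev embedding (valid under Hypothesis on $\sigma$, i.e.\ (\ref{Eq21})) and the $L^p$ bounds (\ref{Eq14}) for $\varphi_{R,L}$ give $\|g|\psi|^{2\sigma}\psi\|_{L^2} \le C\hbar^{-d\sigma/2}$. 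The crucial cancellation is then $|\epsilon| = |\eta|\omega/c$ with $c = C_R = \asy(\hbar^{-d\sigma/2})$, so
\begin{equation*}
\|T(\psi_c)\|_{H^2} \le C\hbar^{-3}|\epsilon|\cdot \hbar^{-d\sigma/2} \le C\hbar^{-3}|\eta|\omega \cdot \hbar^{d\sigma/2}\cdot \hbar^{-d\sigma/2} = \tilde{\asy}(e^{-\rho/\hbar}),
\end{equation*}
since any polynomial factor in $\hbar^{-1}$ is absorbed by the exponentially decaying $\omega$ into the $\tilde\asy$-symbol. This shows $T$ maps the closed ball $B = \{\psi_c \in H^2 : \|\psi_c\|_{H^2} \le K e^{-\rho\alpha/\hbar}\}$ into itself for a suitable $K$ and any $\alpha<1$.

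For the contraction property, I would use the elementary Lipschitz estimate $\bigl||z|^{2\sigma}z - |w|^{2\sigma}w\bigr| \le (2\sigma+1)(|z|^{2\sigma}+|w|^{2\sigma})|z-w|$, then H\"older's inequality and Sobolev embedding to control the $L^2$ norm of this difference by $C\hbar^{-d\sigma/2}\|\psi_c^1 - \psi_c^2\|_{H^2}$. Combined with the same $C\hbar^{-3}|\epsilon|$ prefactor as above, the Lipschitz constant of $T$ is $\tilde\asy(e^{-\rho/\hbar}) \ll 1/2$ for $\hbar$ small, yielding a strict contraction. Banach's theorem then produces a unique fixed point in $B$; the bound (\ref{Eq38}) is precisely the radius of $B$, and uniqueness in the full space $H^2$ follows afterwards by observing that any $H^2$ solution automatically satisfies the same a priori bound (bootstrapping the nonlinear estimate).

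The main obstacle I anticipate is the dimension-dependent book-keeping in the nonlinear estimate: for $d > 2$ the Sobolev embedding of $H^2$ (or rather the semiclassically-scaled version with weighted norms) is delicate, and when $2\sigma < 1$ the elementary Lipschitz inequality above is the best available and must be combined carefully with the $L^\infty$-bounds on $\varphi_{R,L}$ from Lemma 1 (and the sharper $\|\varphi_{R,L}\|_\infty \le C\hbar^{-d/4}$ from the subsequent remark). These are essentially the same estimates already controlled by Lemma 2, so the structure is available; the work lies in assembling all $\hbar$-powers so that the exponential smallness of $\omega$ overwhelms every polynomial loss.
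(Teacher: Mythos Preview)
Your proposal is correct and follows essentially the same route as the paper: a Banach contraction argument on the fixed-point reformulation $\psi_c = -\epsilon(H_0-\Omega-\omega E)^{-1}\Pi_c\, g|\psi|^{2\sigma}\psi$, with the resolvent bound coming from the spectral gap (\ref{Eq9}) and the nonlinear estimates from H\"older/Gagliardo--Nirenberg under the restriction (\ref{Eq21}). The one implementation difference worth noting is that the paper contracts on a \emph{large} ball of radius $c(\hbar)=[\hbar/(C\epsilon)]^{1/2\sigma}$ (which actually grows as $\hbar\to 0$, since $\epsilon=\tilde\asy(e^{-\rho/\hbar})$) and then bounds the fixed point a posteriori by the geometric series starting from $\|F(0)\|_{H^2}\le C\epsilon\hbar^{-1}\|a_R\varphi_R+a_L\varphi_L\|_{H^2}^{2\sigma+1}=\tilde\asy(e^{-\rho/\hbar})$; this yields uniqueness in that large ball directly, so the extra bootstrap you sketch at the end for full-$H^2$ uniqueness becomes unnecessary.
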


\begin {proof}
Recalling that
\bee
\psi = \varphi + \psi_c \, , \ \mbox { where}, \ \varphi = a_R \varphi_R + a_L \varphi_L \, , 
\eee
then (\ref {Eq37}) takes the form
\be
\psi_c = F (\psi_c ) \label {Eq38Bis}
\ee
where 
\be
F(\psi_c ):= F (\psi_c ; a_R, a_L, E)=- \epsilon \left [ H_0 - \Omega - \omega E \right ]^{-1} \Pi_c g |\psi |^{2\sigma } \psi \label {Eq38Ter}
\ee
and where
\be
\left \| \left [ H_0 - \Omega - \omega E \right ]^{-1} \Pi_c \right \|_{\mathcal{L}(L^2 \to H^2)} \le C_1 \hbar^{-1} \label {Eq39}
\ee
for some positive constant $C_1$ and for $\hbar$ small enough, since (\ref {Eq9}) and since $\omega E = \asy (\hbar^2 )$. \ On the other side we have that
\bee
\left \| F(u)-F(v) \right \|_{H^2} 
&\le & \epsilon \frac {C_2}{\hbar} \| |f|^{2\sigma} f - |g|^{2\sigma} g \| \\
&\le & \epsilon \frac {C_2}{\hbar} \|(|f|^{2\sigma} + |g|^{2\sigma})|f-g|\| \\
&\le & \epsilon \frac {C_2}{\hbar} (\|f\|_{H^1}^{2\sigma} +\|g\|_{H^1}^{2\sigma}) 
\|f-g\|_{H^1}
\eee
for some positive constant $C_2$, where we set 
\bee
f = \varphi + u~ \mbox { and }~ g = \varphi + v, 
\eee
with $|a_R|^2 + |a_L|^2 +\|u\|^2=1$, $|a_R|^2 + |a_L|^2 +\|v\|^2=1$. \ We have indeed made use of the H\"older inequality and of the Gagliardo-Nirenberg inequality  with $\sigma$ satisfying condition (\ref {Eq21}): if $2p \sigma < b$ and $ 2p/(p-2)<b$ where $b=+\infty $ if $d=1,2$ and $b = 2d/(d-2)$ if $d>2$, i.e. $\sigma$ satisfies (\ref {Eq21}). \ Finally, we get the wanted estimate
\be
\left \| F(u)-F(v) \right \|_{H^2} 
\le \epsilon \frac {2^{2\sigma} C_2}{\hbar}  \left \{ \max \left [ \| \varphi + u \|_{H^2} , \| \varphi + v \|_{H^2} \right ] \right \}^{2\sigma } \, \| u-v \|_{H^2}  \label {Eq40}
\ee
provided that $\sigma$ satisfies condition (\ref {Eq21}).

Now, let $C_3 = \max [C_1, 2^{2\sigma} C_2 ]$ and let 
\bee
K = \left \{ u \in H^2 \ : \ \| u \|_{H^2} \le c(\hbar ) \right \}, \ \ c(\hbar ) 
= \max \left \{ \left [ \frac {\hbar}{2^{2\sigma+2} 3 C_3 \epsilon } \right ]^{1/2\sigma}, 
\| \varphi \|_{H_2} \right \} \, .
\eee
Since $\| \varphi \|_{H_2} = \asy (\hbar^{-1})$, by Lemma \ref {Lemma1}, and $\epsilon = \tilde O (e^{-\rho /\hbar })$ 
then $c(\hbar ) = \left [ \frac {\hbar}{2^{2\sigma+2} 3 C_3 \epsilon } \right ]^{1/2\sigma }$.

Then $F$ is an operator from $K$ to $K$; indeed, from (\ref {Eq39}) and (\ref {Eq40}) it follows that
\bee
\| F(u) \|_{H^2} \le 
\epsilon C_3 \hbar^{-1} \| u +\varphi \|_{H_2}^{2\sigma +1} 
\le \left [ 2 \epsilon C_3 \hbar^{-1} (2c)^{2\sigma} \right ] c(\hbar) 
=\frac{1}{2} c(\hbar) <c(\hbar)\, .
\eee
Moreover, $F(u)$ is a contraction in $K$:
\bee
\| F(u) - F(v) \|_{H^2} \le C_3 \epsilon \hbar^{-1} \left [ 2 c (\hbar ) \right ]^{2\sigma } \| u-v \|_{H^2} < \frac{1}{4} \| u-v \|_{H^2} \, .
\eee
Hence, equation 
\bee
F(u) =u 
\eee
admits a unique solution $\psi_c$ in $K$ for any $(a_R, a_L, E) \in D$ and any $\epsilon$ satisfying Hyp. \ref {Hyp3}. \ This solution is given by the limit of the following sequence $\{ u_n \}_{n=0}^\infty $ where
\bee
u_0 =0 \ \mbox { and } \ u_{n+1} = F (u_n) \, .
\eee
In particular (the convergence is in $H^2$)
\bee
\psi_c = \lim_{n\to + \infty} u_n = \sum_{j=1}^{+\infty} \left [ u_{j+1} - u_j \right ] = \sum_{j=1}^{+\infty} \left [ F(u_{j}) - F(u_{j-1}) \right ] 
\eee
Since
\bee
\left \| F(u_{j+1}) - F(u_{j}) \right \|_{H^2} 
&\le & C_3 \epsilon \hbar^{-1} [2 c(\hbar )]^{2\sigma} \left \| F(u_{j}) - F(u_{j-1}) \right \|_{H^2} \\ 
&\le & \left [ C_3 \epsilon \hbar^{-1} [2c(\hbar )]^{2\sigma} \right ]^{j+1} 
\left \| F(u_{0})  \right \|_{H^2}
\eee
then we have that 
\be
\| \psi_c \|_{H^2} &\le & \frac {1}{1-C_2 \epsilon \hbar^{-1} [2c(\hbar )]^{2\sigma}} \left \| F(u_{0})  \right \|_{H^2} \nonumber \\ 
&\le & \frac {1}{1-C_3 \epsilon \hbar^{-1} [2c(\hbar )]^{2\sigma}} C_3 \epsilon \hbar^{-1} \left \| 
a_R \varphi_R + a_L \varphi_L \right \|_{H^2}^{2\sigma +1} \nonumber \\
& =& \tilde \asy (e^{-\rho /\hbar} ) \label {Eq41}
\ee
Since the constants $C_1$ and $C_2$ depend on $a_R$, $a_L$ and $E$ in such a way that 
\bee
\max_{|\omega E |\le C \hbar^2  } C_1 < + \infty 
\eee
and
\bee
\max_{|a_R|^2 + |a_L|^2 \le 1} C_2 < + \infty 
\eee
then the estimate (\ref {Eq41}) uniformly holds true on the set $D$.
\end {proof}

\begin {remark} \label {Remark11}
By means of the same arguments it follows that $\psi_c \in H^2$, as function on $a_R$, $a_L$ and $E$, admits the first derivatives and in particular these derivatives satisfy estimate (\ref {Eq38}) in the sense that  
\be
\max_{(a_R,a_L,E) \in D} \left [ \left \| \frac {\partial \psi_c }{\partial E} \right \|_{H^2} ,\  \left \| \frac {\partial \psi_c }{\partial a_R} \right \|_{H^2},\  \left \| \frac {\partial \psi_c }{\partial a_L} \right \|_{H^2}\right ] = \tilde \asy \left ( e^{-\rho /\hbar } \right )\, ,\ \mbox { as } \ \hbar \to 0 \, . \label {Eq42}
\ee
We can also give an estimate of the dependence of $\psi_c$ on the parameter $\epsilon$; this estimate will be given in Lemma \ref {Lemma7}.
\end {remark}

Now, setting $\psi_c = \psi_c (a_R, a_L, E)$ in (\ref {Eq28}), let any $0<\rho' < \rho$ fixed, let
\be
\nu = e^{-\rho' \gamma /\hbar }   \label {Eq43} 
\ee
where $\gamma$ is defined in equation (\ref {Eq31}), and making use of Lemma \ref {Lemma2}, then (\ref {Eq28}) takes the form 
\be
\left \{
\begin {array}{lcll} 
E a_R &=& - a_L + \eta  |a_R|^{2\sigma } a_R & + \nu f_R (a_R , a_L, E)  \\ 
E a_L &=& - a_R + \eta  |a_L|^{2\sigma } a_L & + \nu f_L (a_R , a_L, E)  \\ 
1 &=& |a_R|^2 + |a_L|^2 & + \nu f_c (a_R , a_L, E)
\end {array}
\right. \, 
\label {Eq44}
\ee
where $f_R$, $f_L$ and $f_c$ are uniformly bounded on $D$ with their first derivatives. \ Since Lemma \ref {Lemma3} and Remark \ref {Remark11}, and recalling that $\epsilon /\omega = \eta / \langle \varphi_R^{\sigma +1} , g \varphi_R^{\sigma +1} \rangle = O (\hbar^{-d\sigma/2})$. 
\ From (\ref {Eq32}) then (\ref {Eq44}) takes the form
\bee
\left \{ 
\begin {array}{lcll}
E p &=& - q e^{-i \theta } + \eta p^{2\sigma +1} &+ \nu e^{-i \theta } f_R \\
E q &=& - p e^{i \theta }  + \eta q^{2\sigma +1} &+ \nu f_L \\
1   &=& p^2 +q^2 &+ \nu f_c 
\end {array}
\right.
\eee
By taking the real and imaginary part of the previous equations we obtain the following system 
\be
G (p,q,E,\theta ;\nu )=0 \label {Eq45}
\ee
on 
\bee
D' = \left \{ (p, q, E, \theta) \in [0,1]^2 \times \R \times [0,2\pi ) \ :\ p^2 + q^2 \le 1 , \ |\omega E | \le C \hbar^2  \right \} 
\eee
and where $G =(G_1,G_2,G_3,G_4)$ are given by 
\bee
G_1 &=& E- \frac {1}{1-\nu f_c} \left [ - 2 pq \cos \theta + \eta (p^{2\sigma +2} + q^{2\sigma +2} ) + \nu \Re (p e^{-i\theta } f_R + q f_L ) \right ] \\ 
&=& E + 2 pq \cos \theta - \eta (p^{2\sigma +2} + q^{2\sigma +2} ) + \nu f_1 \\
G_2 &=& (p^2+q^2) \sin \theta + \nu \Im (e^{-i\theta } f_R - p f_L ) = (p^2+q^2) \sin \theta + \nu f_2 \\ 
G_3 &=& (p^2 - q^2) \cos \theta + \eta p q (p^{2\sigma} - q^{2\sigma } ) + \nu \Re (q e^{-i\theta } f_R - p f_L ) \\ 
&=& (p^2 - q^2) \cos \theta + \eta p q (p^{2\sigma} - q^{2\sigma } ) + \nu f_3 \\ 
G_4 &=& p^2 + q^2 + \nu f_c -1 = p^2 + q^2 - 1 +\nu f_4 
\eee
where $f_j$, $j=1,2,3,4$, are uniformly bounded on the set $D'$ with their first derivatives.

From equations $G_2=0$ and $G_4 =0$ we obtain that 
\bee
p^2+q^2 =1+ \asy (\nu ) \ \mbox { and } \ \theta =\asy (\nu )\, , \ \theta =\pi +\asy (\nu )\, .
\eee
From this fact and from equations $G_1=0$ and $G_3 =0$ we finally obtain the equations
\be
& G_\pm  + \asy (\nu ) =0   \label {Eq46} \\ 
& E_\pm = \mp 2 p q + \eta (p^{2\sigma +2} +q^{2\sigma +2} ) + \asy (\nu ) \label {Eq47}
\ee
where the asymptotics is uniformly on $D'$, the index $+$ corresponds to the choice $\theta =\asy (\nu )$, the index $-$ corresponds to the choice $\theta =\pi +\asy (\nu )$ and where 
\bee
G_\pm = \pm \left [  (p^2-q^2) \pm \eta pq (p^{2\sigma} - q^{2\sigma }) \right ] \, . 
\eee
The imbalance function $z=p^2-q^2$ is such that 
\bee
p = \sqrt {\frac {1+z}{2}} + \asy (\nu ) \ \ \mbox { and } \ \ q = \sqrt {\frac {1-z}{2}} + \asy (\nu )
\eee
and thus equations (\ref {Eq46}) and (\ref {Eq47}) take the form
\be
& f_\pm (z,\eta ) + \asy (\nu) = 0  \label {Eq48} \\ 
& E_\pm = \mp \sqrt {1-z^2} + \eta \left [ \left ( \frac {1+z}{2} \right )^{\sigma+1} + \left ( \frac {1-z}{2} \right )^{\sigma+1} \right ] + \asy (\nu) \label {Eq49}
\ee
where 
\be
f_\pm (z,\eta ) = z \pm \eta \frac {\sqrt {1-z^2}}{2} \left [ \left ( \frac {1+z}{2} \right )^\sigma - \left ( \frac {1-z}{2} \right )^\sigma \right ] \, . \label {Eq50}
\ee

Since the asymptotic term $\asy (\nu)$ in (\ref {Eq48}), with its derivative with respect to $z$, is uniform with respect to $z \in [-1,+1]$ then it is enough to look for the solutions of equations $f_\pm (z,\eta )=0$.

Of course, equation 
\bee
f_\pm (0,\eta ) = 0
\eee
holds true for any $\eta$; that is the symmetric stationary solution $(z=0 , \theta =0)$ which is positive and the antisymmetric stationary solution $(z=0 , \theta = \pi)$ exist for the nonlinear problem (up to an exponentially small perturbation) as well as for the linear one. 

Since we have assumed, for the sake of definiteness, $\eta >0$; then equation $f_+ (z,\eta )=0$ does not have non zero solutions, indeed the derivative of $f_+$ with respect to $z$ is given by 
\bee
f_+ ' (z,\eta ) = 2 \frac {1+z^2}{[1-z^2]^{3/2}} + \frac 12 \eta \sigma \left [ \left ( \frac {1+z}{2} \right )^{\sigma -1} + \left ( \frac {1-z}{2} \right )^{\sigma -1} \right ] 
\eee
which is always positive for any $z \in [-1 , + 1]$ and for any $\eta >0$.

Thus, we have only to look for the non zero solutions $z$ of equation
\be
f_- (z,\eta )=0 \, . \label {Eq51}
\ee
To this end, we consider the function $\eta (z)$, defined by (\ref {Eq33}), which satisfies the implicit equation 
\bee
f_- \left [ z , \eta (z) \right ] =0\, , \ \forall z \in (0,1)\, .
\eee
Thus, the inverse function $z=z(\eta )$ of $\eta (z)$ gives the solutions of equation (\ref {Eq51}); in order to count the branches of the inverse function $z = z(\eta)$ we compute the first derivative 
\bee
\eta ' (z) = 2^{\sigma +1} \frac {g(z)-g(-z)}{[1-z^2]^{3/2} [(1+z)^\sigma - (1-z)^\sigma ]^2} \, , 
\eee
where
\bee
g(z) = (\sigma z^2 - \sigma z +1) (1+z)^\sigma \, .
\eee
Since 
\bee
\lim_{z\to 0^+} \eta' (z) = 0
\eee
then a bifurcation of the stationary solution occurs at $z=0$ for 
\bee
\eta^\star = \lim_{z\to 0^+} \eta (z) = 2^\sigma /\sigma \, .
\eee
Furthermore, a straightforward calculation gives also that 
\bee
\lim_{z \to 0^+} \eta '' (z) = - \frac {2^\sigma}{3\sigma } (\sigma^2 - 3 \sigma -1)
\eee
and
\be
\lim_{z\to 0} \eta'' (z) 
\left \{
\begin {array}{ll}
> 0 & \ \mbox { if } \ \sigma < \sigma_{threshold} \\
= 0 & \ \mbox { if } \ \sigma = \sigma_{threshold} \\
< 0 & \ \mbox { if } \ \sigma > \sigma_{threshold} \\
\end {array}
\right. \label {Eq52}
\ee
where
\bee
\sigma_{threshold} = \frac {3+\sqrt {13}}{2} \, . 
\eee
Hence, we can conclude that in the case $\sigma \le \sigma_{threshold}$ then we have a supercritical pitchfork bifurcation at $z=0$ (see Fig. \ref {Fig1} - panel (a)), and for $\sigma > \sigma_{threshold}$ then we have a subcritical pitchfork bifurcation at $z=0$ (see Fig. \ref {Fig1} - panel (b)).

Finally, we only have to count the number of branches of the function $z(\eta )$ and thus we look for the number $N$ of the solutions (counting multiplicity) of the equation
\be
h(z)=0 \, , \ h(z)= g(z)-g(-z)\ , \label {Eq53}
\ee
for $z$ in the interval $z \in (-1 , +1)$. 

\begin {lemma} \label {Lemma4}
Let $N$ be the number of solutions $z$ of the equation $h(z)=0$ in the interval $[-1,+1]$, counting multiplicity. \ It follows that $z=0$ is a solution with multiplicity $3$ if $\sigma \not= \sigma_{threshold}$, and with multiplicity $5$ if $\sigma = \sigma_{threshold}$. \ Furthermore, it also follows that 
\be
N = 
\left \{
\begin {array}{ll}
3 & \mbox { if } \sigma < \sigma_{threshold} \\ 
5 & \mbox { if } \sigma \ge \sigma_{threshold}
\end {array}
\right. \, . 
\ee
\end {lemma}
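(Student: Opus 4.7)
My plan has three parts: a symmetry reduction, a Taylor-series computation for the multiplicity of $z=0$, and a zero-counting step on $(0,1)$ that combines endpoint data with a Budan-Fourier-type upper bound.

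Since $h(z)=g(z)-g(-z)$ is odd by construction, its zeros in $[-1,+1]$ decompose as $\{0\}$ together with a set in $(0,1)$ reflected symmetrically into $(-1,0)$; hence $N = m_0 + 2N_+$, where $m_0$ is the multiplicity of $z=0$ and $N_+$ is the number of zeros (with multiplicity) in $(0,1)$. To identify $m_0$, I would Taylor expand $g(z)=(\sigma z^2-\sigma z+1)(1+z)^\sigma$ using the binomial series $(1+z)^\sigma=\sum_k\binom{\sigma}{k} z^k$, which yields the coefficients $g_n = \binom{\sigma}{n}-\sigma\binom{\sigma}{n-1}+\sigma\binom{\sigma}{n-2}$. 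Because only odd powers of $z$ survive in $h$, $m_0$ equals the smallest odd $n$ with $g_n\ne 0$. A direct calculation gives $g_1=\sigma-\sigma=0$ and $g_3=-\sigma(\sigma^2-3\sigma-1)/3$, which is, up to a factor, exactly the expression already appearing in (\ref{Eq52}); hence $m_0=3$ except when $\sigma=\sigma_{threshold}$, in which case I would compute $g_5$ explicitly at that specific value and check that it is nonzero, yielding $m_0=5$.

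For the count on $(0,1)$, a direct evaluation gives $h(1)=2^\sigma>0$, while near $z=0^+$ the leading behavior $h(z)\sim 2g_3 z^3$ (or $2g_5 z^5$ at the borderline) makes the sign positive for $\sigma<\sigma_{threshold}$ and negative for $\sigma>\sigma_{threshold}$. The intermediate value theorem then forces at least one zero in $(0,1)$ when $\sigma>\sigma_{threshold}$, and forces the number of sign-change zeros in $(0,1)$ to be even when $\sigma\leq\sigma_{threshold}$. I would then apply the generalized Budan-Fourier theorem of \cite{CLLLR} to $h$, viewed as a fewnomial combination of $(1+z)^\sigma$ and $(1-z)^\sigma$ with polynomial coefficients of degree at most two, in order to cap $N_+$ from above by one. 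Matching this upper bound against the parity and existence constraints then yields $N_+=0$ for $\sigma\leq\sigma_{threshold}$ and $N_+=1$ for $\sigma>\sigma_{threshold}$, so that $N=3$ or $N=5$ respectively, as claimed.

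The main obstacle is the Budan-Fourier step itself: since $\sigma$ is typically non-integer, $h$ is not a polynomial, so the classical Descartes-Fourier sequence is unavailable and one must carefully fit $h$ into the fewnomial framework of \cite{CLLLR} to get a tight upper bound on $N_+$. A back-up route is to analyze $\eta(z)$ from (\ref{Eq33}) directly, proving that $\eta$ is monotone on $(0,1)$ for $\sigma<\sigma_{threshold}$ and unimodal for $\sigma>\sigma_{threshold}$ (for instance via the sign of $(\log\eta)'$ combined with the boundary behavior $\eta(0^+)=2^\sigma/\sigma$ and $\eta(1^-)=+\infty$), which would give $N_+$ as the number of local minima of $\eta$ on $(0,1)$.
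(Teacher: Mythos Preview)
Your overall architecture---compute the multiplicity $m_0$ at $z=0$ by Taylor expansion, bound $N$ from below via the signs of $h$ near $0^+$ and at $z=1$, and bound $N$ from above by a Budan--Fourier-type count---is exactly the paper's strategy, and your multiplicity and lower-bound steps are fine.

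The gap is in the upper bound. The generalized Budan--Fourier theorem of \cite{CLLLR} applies to functions of the form $\sum_j a_j y^{r_j}$ with real exponents $r_j$ in a \emph{single} base variable; it does not handle combinations of two distinct bases $(1+z)^\sigma$ and $(1-z)^\sigma$. So the step ``apply \cite{CLLLR} to $h$ as a fewnomial'' does not go through as stated, and you have (rightly) flagged this as the main obstacle without resolving it. The paper's missing ingredient is the M\"obius-type substitution $y=\frac{1-z}{1+z}$, which sends $(-1,1)$ bijectively to $(0,+\infty)$ and, after clearing the positive factor $2^\sigma (1+y)^{-(\sigma+2)}$, turns $h(z)=0$ into
\[
p_\sigma(y)=y^{\sigma+2}+(2-2\sigma)\,y^{\sigma+1}+(1+2\sigma)\,y^{\sigma}-(1+2\sigma)\,y^{2}-(2-2\sigma)\,y-1=0,
\]
a genuine six-term generalized polynomial in the single variable $y$. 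Now the Budan--Fourier sign-variation count (classical for integer $\sigma$, the version of \cite{CLLLR} otherwise) gives at most five roots in $(0,+\infty)$, i.e.\ $N\le 5$, which together with your lower bounds closes the argument. The paper also treats $0<\sigma<1$ by a separate elementary monotonicity argument, since in that range $b=2-2\sigma>0$ and the sign pattern of $p_\sigma$ degenerates.

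Your back-up route through the monotonicity or unimodality of $\eta$ on $(0,1)$ is circular: $\eta'(z)=0$ is precisely $h(z)=0$, so proving $\eta$ has at most one critical point in $(0,1)$ is the original problem restated.
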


\begin {proof}
We may remark that if $z^\star$ is such that $h(z^\star )=0$ then $h(-z^\star )=0$, too; furthermore $h(\pm 1) = \pm 2^\sigma \not= 0$. \ First of all we see that $z=0$ is a solution of (\ref {Eq53}) with multiplicity $3$ for any $\sigma \not= \sigma_{threshold}$; indeed, a straightforward calculation gives that
\bee
h(0) = h'(0)= h'' (0) =0 \ \ \mbox { and } \ \ h''' (0)= 4 \sigma (-\sigma^2 + 3 \sigma +1) \, . 
\eee
Then $h''' (0) \not= 0 $ if $\sigma \not= \sigma_{threshold}$. \ If $\sigma = \sigma_{threshold}$ then a straightforward calculation gives that $h''' (0) = h^{IV} (0 )=0$ and 
\begin{eqnarray*}
h^V (0) &=& - 86 (\sigma_{threshold}^4 -10 \sigma_{threshold}^3 
+ 20 \sigma_{threshold}^2 - 5 \sigma_{threshold} - 6 ) \\ 
&=& 24 (3+\sqrt {13}) (4+\sqrt {13}) >0.
\end{eqnarray*}

Hence, it follows that
\bee
N \mbox { is } \ \left \{ 
\begin {array}{ll}
\ge 5 & \ \mbox { if } \ \sigma > \sigma_{threshold} \\ 
= 5 \ \mbox { or } \ \ge 9 & \ \mbox { if } \ \sigma = \sigma_{threshold} \\ 
= 3 \ \mbox { or } \ \ge 7 & \ \mbox { if } \ \sigma < \sigma_{threshold}
\end {array}
\right.
\eee
where $N$ is number of solutions, counting multiplicity, of equation $f_- (z,\eta )=0$

Indeed, we see that 
\bee
\lim_{z \to \pm 1} \eta (z) = + \infty \, .
\eee
Then , in the case $\sigma > \sigma_{threshold}$ since $\lim_{z\to 0} \eta'' (z) <0$ then there exists two non-zero solutions of equation (\ref {Eq53}) in the interval $(-1,+1)$ at least; hence, the number $N$ of solutions of equation (\ref {Eq53}), counting multiplicity, is $N \ge 5$. 

In the opposite case $\sigma < \sigma_{threshold}$ it follows $\lim_{z\to 0} \eta '' (z) > 0$, then we have two cases: or equation (\ref {Eq53}) does not have solutions $z\in (-1,+1)$, $z\not= 0$, and in this case $N=3$; or equation (\ref {Eq53}), counting multiplicity, has other solutions $z\in (-1,+1)$, $z\not= 0$, and in this case the number of such a solutions is bigger than $4$, in this case $N \ge 7$. 

Finally, in the case $\sigma = \sigma_{threshold}$ it follows that $\lim_{z\to 0} \eta'' (z)= \lim_{z\to 0} \eta ''' (z) =0$ and 
\bee
\lim_{z\to 0} \eta^{IV} (z) = \frac {6 \cdot 2^{\sigma_{threshold}} \left ( 829 \sqrt {13} + 2989 \right )}{5 \left ( 649 + 180 \sqrt {13} \right ) } >0 \, , 
\eee
hence $N=5$ or $N \ge 9$.

If we can prove that $N \le 5$ then the Theorem is completely proved.

To this end we set
\bee
y = \frac {1-z}{1+z} \, , \ y \in (0, + \infty )\, .  
\eee
Hence, equation $h(z)=0$ in the interval $(-1,1)$ reduces to the equation of the form $p_\sigma (y)=0$ where
\bee
p_\sigma (y) &=& y^\sigma (y^2 + b y +a) - (a y^2 + b y +1)  \\
&=& y^{\sigma +2}  + b y^{\sigma +1}   +a y^\sigma - a y^2 - b y -1
\eee
and where
\bee
a= 1 + 2 \sigma , \ \ b=2 - 2 \sigma \, .
\eee

We remark that if $y^\star >1$ is a root of the polynomial $p_\sigma $ corresponding to a give $z^\star >0$, then $1/y^\star <1$ corresponds to $-z^\star$ and it is a root, too. \ We remark also that: 

\begin {itemize}

\item [-] for any $\sigma \not= \frac {1}{2} \left [ 3 + \sqrt {13} \right ] $ then $p_\sigma$ has solution $y=1$ with multiplicity $3$;

\item [-] for $\sigma = \frac {1}{2} \left [ 3 + \sqrt {13} \right ] $ then $p_\sigma$ has solution $y=1$ with multiplicity $5$.

\end {itemize}

We assume, for a moment, that $\sigma$ is a positive integer and we see that: 

\begin {itemize}

\item [-] $\sigma =1$: in such a case $p_1 = (y-1)^3$ which has only the solution $y=1$ in the interval $(0,+\infty )$ with multiplicity $3$;

\item [-] $\sigma =2$: in such a case $p_2 =(y-1)^3 (y+1)$ which has only the solution $y=1$ in the interval $(0,+\infty )$ with multiplicity $3$;

\item [-] $\sigma = 3$: in such a case $p_3 = (y-1)^3 (y^2-y+1)$ which has only the solution $y=1$ in the interval $(0,+\infty )$ with multiplicity $3$;

\end {itemize}

Now, we are looking for the number $N$ of real solutions, counting multiplicity, of the polynomial $p_\sigma$ in the interval $(0,+\infty )$. \ We already know that for $\sigma =1,2,3$ then $N=3$; we also already know that for $\sigma =4,5,6, \ldots$ then $N\ge 5$ and, in order to get an upper estimate of $N$, we make use of the Budan-Fourier theorem \cite {Prasolov}.

If we denote by $v(y)$ the number of sign changes in the sequence
\bee
\left \{ p_\sigma (y) , p_\sigma' (y), \ldots , p_{\sigma}^{(\sigma )} (y) , p_{\sigma}^{(\sigma +1)} (y) , p_{\sigma}^{(\sigma +2)} (y) \right \}
\eee
then the Budan-Fourier theorem applied to the polynomial $p_\sigma$ with degree $\sigma +2$, where $\sigma =4,5,6, \ldots$, states that
\bee
N \le |v(+\infty ) - v(0)| .
\eee

Since 
\bee
\lim_{y \to + \infty } p_\sigma^{(n)} > 0 
\eee
for any $n=0,1,2, \ldots , \sigma$ then $v (+\infty )=0$. \ On the other hand we observe that
\bee
p_\sigma (0) &=& -1 < 0 \\
p_\sigma ' (0) &=& - b > 0 \\
p_\sigma '' (0) &=& -2  a < 0 \\ 
p_\sigma^{(n)} (0) &=& 0 \ \ \mbox { if } \ 2 < n < \sigma \\
p_\sigma^{(\sigma)} (0) &=& \sigma ! a > 0 \\
p_\sigma^{(\sigma +1)} (0) &=& b (\sigma +1)! < 0 \\
p_\sigma^{(\sigma +2)} (0) &=& a (\sigma +2)! > 0
\eee
since $a=1+2\sigma >0$ and $b= 2-2\sigma <0$ for $\sigma = 4,5,6,\ldots $. \ Then, $v(0)=5$ and so we can conclude that
\bee
N \le | v(+\infty )- v(0)| = 5
\eee
Therefore, the number of solutions $y \in (0,+\infty )$, $y\not= +1$, is exactly equal to $2$.

We prove now that $N \le 5$ even for any positive not integer $\sigma$. \ In order to prove that $N \le 5$ we make use of an extended version of the Budan-Fourier theorem \cite {CLLLR} applied to the polynomial $p_\sigma (y)$ for $y \in (0,+\infty)$. \ If we assume, for a moment, that $\sigma >2$ (the case $1 < \sigma <2$ can be similarly treated) then we set 
\bee
r_0 =0, \ r_1=1, \ r_2=2, \ r_3 = \sigma , \ r_4 = \sigma +1, \ r_5 =\sigma +2 
\eee
where (mimicking Example 2 in \cite {CLLLR})
\bee
a_0 =-1, \ a_1 = (2\sigma -2) , \ a_2 = -(1+2\sigma ),\ a_3 = 1 + 2 \sigma , \ a_4 = -(2\sigma -2) ,\ a_5 =1 . 
\eee 
In this case we have 6 functions $g_j (y)$, $j=0,1,2,3,4,5$ such that
\bee
g_5 (y) = p_\sigma (y)
\eee
and
\bee
\lim_{y\to + \infty} g_j (y) = + \infty 
\eee
and
\bee
g_0 (0) &= & 2 (\sigma +1) (\sigma +2) >0 \\
g_1 (0) &= & -2(\sigma -1)^2 (\sigma +1) \sigma <0 \\
g_2 (y ) &=& (1+2\sigma) \sigma (\sigma -1) (\sigma -1) y^{\sigma -3} + O\left ( y^{\sigma -2} \right ) >0  \ \mbox { as } \ y \to 0^+ \\ 
g_3 (0) &=& -2 (1+2\sigma ) <0 \\ 
g_4 (0) &=& 2( \sigma -1) > 0 \\
g_5 (0) &=& -1 < 0
\eee
Then the sequence $g_j$ has zero sign changes at $+\infty$ and it has $5$ sign changes at $0^+$, i.e.: $v(+\infty )=0$ and $v(0)=5$. \ Therefore,  Theorem 1 \cite {CLLLR} implies that $N \le |v(+\infty )- v(0)| = 5$.

It remains to consider the case $0<\sigma <1$. \ In order to look for the solutions $y>0$ of equation $p_\sigma (y)=0$ we observe that these solutions are such that 
\bee
y^\sigma = \frac {ay^2 + by + 1}{y^2 + b y +a^2}
\eee
where the l.h.s. of this equation is a monotone increasing function, while the r.h.s. is a monotone decreasing function for $0<\sigma <1$. \ Hence, the number of solutions, counting multiplicity, of the equation $p_\sigma (y)=0$ is $N=3$. 
\end {proof}

The proof of the theorem is so completed.

\begin{remark}
From Lemma 4 it turns out that when $\sigma \le \sigma_{threshold}$ then
equation $\eta'(z)=0$ has only solution $z=0$ and therefore, under such
condition on $\sigma$, we only observe a bifurcation of the stationary
solution at $|\eta |= \eta^\star$. On the other side, when $\sigma >  
\sigma_{threshold}$ then the number of solutions (counting multiplicity)
of equation $\eta'(z)=0$ is $5$, since the solution $z=0$ has multiplicity
$3$ then the other $2$ solutions are $\pm z^+$, where $z^+ \in (0,1)$, and
they are associated to saddle points appearing at $|\eta |=\eta^+$, where
$\eta^+ = \eta (z^+)$.
\end{remark}

\begin {remark} We just point out that in the case of $\eta <0$ then we can apply the same arguments; we only have to emphasize that for negative values of $\eta$ then equation $f_- (z,\eta )=0$ does not have non zero solutions and that bifurcations come from equation $f_+ (z,\eta )=0$.
\end {remark}

\begin{figure}
\begin{center}
\includegraphics[height=8cm,width=10cm]{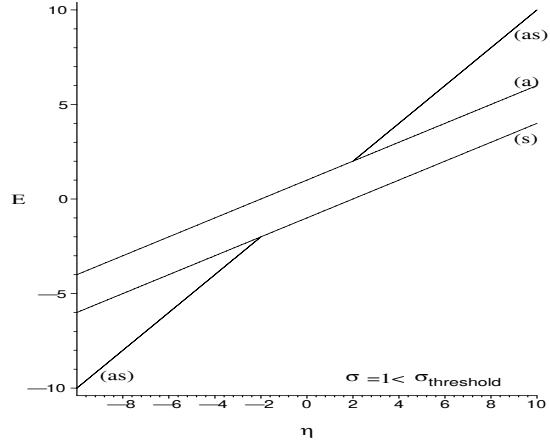}
\end{center}
\caption{\label {Fig2} In this figure we plot the graph of the values of the function $E$ versus the nonlinearity parameter $\eta$ for nonlinearity $\sigma =1 < \sigma_{threshold}$. \ For $\eta = \pm \eta^\star$, $\eta^\star =2$ for $\sigma =1$, a bifurcation occurs and a new branch corresponding to the asymmetrical stationary state appears. \ Line (s) denotes the symmetric stationary solutions, line (a) denotes the antisymmetric stationary solutions, and (as) denote the asymmetrical stationary solutions.}
\end{figure}
\begin{figure}
\begin{center}
\includegraphics[height=8cm,width=10cm]{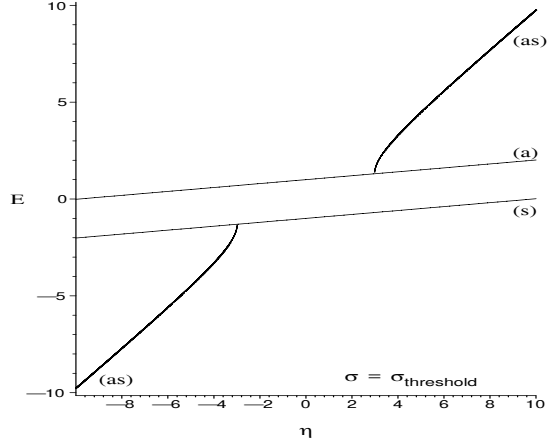}
\end{center}
\caption{\label {Fig3} In this figure we plot the graph of the values of $E$ as function of the nonlinearity parameter $\eta$ for critical  nonlinearity $\sigma =  \sigma_{threshold}$.}
\end{figure}
%
\begin{figure}
\begin{center}
\includegraphics[height=8cm,width=10cm]{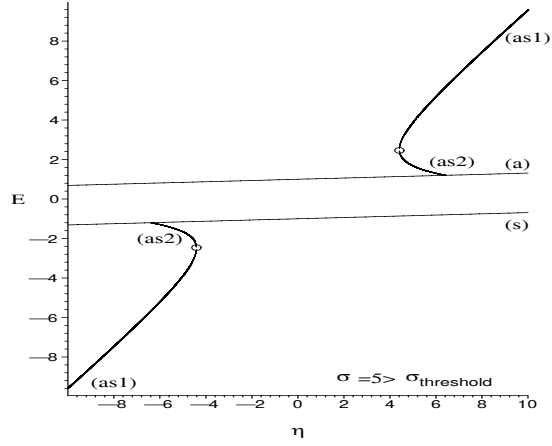}
\end{center}
\caption{\label {Fig4} In this figure we plot the graph of the values of the function $E$ versus the nonlinearity parameter $\eta$ for nonlinearity $\sigma =5 > \sigma_{threshold}$. \ At $|\eta |=\eta^+$, $\eta^+ \approx 4.41$ for $\sigma =5$, a couple of saddle nodes appear, and the corresponding branches, denoted (as1) and (as2), are associated to asymmetrical stationary solutions; asymmetrical solution (as2) then disappears at $|\eta | = \eta^\star$, $\eta^\star = 6.4$ for $\sigma =5$.}
\end{figure}

\begin {remark}
For large $\sigma$ the roots $y < 1$ of the polynomial $p_\sigma (y)$ are asymptotically given by the roots of equation
\bee
(1+2\sigma) y^2 + (2-2\sigma) y +1 =0.
\eee
That is
\bee
y \sim \frac {1}{1+2\sigma} \ \mbox { for } \ \sigma \gg 1 
\eee
Hence, the solution $z^+$ of equation $\eta ' (z)=0$ is asymptotically given by
\bee
z^+ \sim 1 - \frac {1}{\sigma} - \frac {1}{\sigma^2} 
\eee
and we have that
\bee
\eta^+ = \sqrt {2 e \sigma } \left [ 1 + O(\sigma^{-1}) \right ]
\eee
in the limit of large $\sigma$.
\end {remark}

\begin {remark} 
The frequency $\lambda$ of stationary solutions of equation (\ref {Eq25}) are thus given by 
\bee
\lambda = \Omega + \omega E 
\eee 
where $E=E(z)$ is the multivalued function given by (\ref {Eq49}), where $z=z(\eta )$ are the roots of the equation $f_\pm ( z )=0$. \ For the graph of the functions $E(z)$, depending on $\eta$, we refer to the Fig. \ref {Fig2}, Fig. \ref {Fig3} and Fig. \ref {Fig4}. \ We observe the following behaviors (where we assume $\eta <0$ for argument's sake):

\begin {itemize}

\item [-] When $- \eta^\star < \eta < 0$ for $\sigma \le \sigma_{threshold}$, or $- \eta^+ < \eta < 0$ for $\sigma > \sigma_{threshold}$, then we only have the linear stationary states.

\item [-] When $\eta <- \eta^\star$ and $\sigma \le \sigma_{threshold}$, then the symmetric solution bifurcates at $\eta = - \eta^\star$ and then we have 4 stationary solutions: the two linear stationary states and two new asymmetrical stationary states; a similar picture actually occurs also when $\sigma > \sigma_{threshold}$, but in this case the two new asymmetrical stationary solutions don't come by a bifurcation of the symmetric stationary solution, but they come from a branch of saddle points.

\item [-] When $- \eta^\star < \eta  <  - \eta^+$ and $\sigma > \sigma_{threshold}$, then a couple of saddle points occurs and thus we have 4 asymmetrical stationary solutions. \ Two of them, denoted as (as1), are much more localized on a single well than the ones denoted by (as2).

\end {itemize}

\end {remark}

\section {Dynamical stability}\label {Sec4}

The time-dependent equation (\ref {Eq24}), when projected on the one-dimensional spaces spanned by the single-well states $\varphi_R$ and $\varphi_L$, and on the space $\Pi_c L^2 (\R^d )$, takes the form
\be
\left \{
\begin {array}{lcl}
ia_R'  &=& - a_L + r_R  \\ 
i a_L' &=& - a_R + r_L  \\ 
i \psi_c' &=& \frac {1}{\omega} \left [ H_0-\Omega \right ] \psi_c + r_c
\end {array}
\right. \label {Eq55}
\ee
where we have set $\psi \to e^{-i \Omega \tau /\omega} \psi (x,\tau )$. \ We call \emph {two-level approximation} the system of differential equations coming from (\ref {Eq55}) taking $\psi_c =0$ and neglecting the exponential remainder term in $r_{R,L} (a_R, a_L, 0)$ (see Lemma \ref {Lemma2}); in such a case the two-level approximation takes the form 
\be
\left \{
\begin {array}{lcl} 
i a_R' &=& - a_L + \eta  |a_R|^{2\sigma } a_R  \\ 
i a_L' &=& - a_R + \eta  |a_L|^{2\sigma } a_L  
\end {array}
\right. \, ,\ |a_R|^2 + |a_L|^2 =1 
\label {Eq56}
\ee
We may remark that the two-level system (\ref {Eq56}) takes the Hamiltonian form
\bee
i A' = \partial_{\bar A} {\mathcal H} , \ \ A = (a_R, a_L)\, , 
\eee
with Hamiltonian function 
\be
{\mathcal H} = - \left [ \left ( \bar a_R a_L + \bar a_L a_R \right ) - \frac {\eta }{\sigma +1} \left ( |a_R|^{2(\sigma +1)} + |a_L|^{2(\sigma +1)} \right ) \right ] \label {Eq57}
\ee
corresponding to the energy functional restricted to the two-dimensional space spanned by the two single-well states. \ The stationary solutions of the two-level system (\ref {Eq56}) are associated to stationary points of the energy functional ${\mathcal H}$, then we can attribute them some stability/instability properties in the sense of the theory of dynamical system. \ In particular, let 
$\theta = \mbox {arg} (a_R) - \mbox {arg} (a_L)$ be the difference between the phases of $a_R$ and $a_L$, and let $z=|a_R|^2-|a_L|^2$ be the imbalance function, then system (\ref {Eq56}) takes the Hamiltonian form 
\be
\left \{
\begin {array}{lcl}
\dot \theta &=& \partial_z {\mathcal H} \\ 
\dot z &=& - \partial_\theta {\mathcal H} 
\end {array}
\right. \label {Eq58}
\ee
where the Hamiltonian (\ref {Eq57}) takes now the form
\bee
{\mathcal H} = - \sqrt {1-z^2} \cos \theta + \frac {\eta}{\sigma +1} \left [ \left ( \frac {1+z}{2} \right )^{\sigma +1} + \left ( \frac {1-z}{2} \right )^{\sigma +1} \right ] \, . 
\eee

In order to study the stability properties of the stationary solutions of equation (\ref {Eq58}) we have to consider the matrix
\bee
Hess = \left ( 
\begin {array}{cc}
\frac {\partial^2 {\mathcal H}}{\partial z \partial \theta} & \frac {\partial^2 {\mathcal H}}{\partial z^2} \\ 
- \frac {\partial^2 {\mathcal H}}{\partial \theta^2} & -\frac {\partial^2 {\mathcal H}}{\partial \theta \partial z} 
\end {array}
\right )
\eee
at the stationary points. \ Since the trace of $Hess$ is zero then we have that the stationary point is a circle if det $Hess >0$, and it is a saddle point if det $Hess <0$. 

\subsection {Dynamical stability of the symmetric and antisymmetric stationary states} We consider, at first, the symmetric and antisymmetric stationary states corresponding to $\theta =0$ and $z=0$ (symmetric), and $\theta =\pi $ and $z=0$ (antisymmetric). \ A straightforward calculation gives that 
\bee
\left. \mbox {det } Hess \right |_{\theta=0, \ z=0} = 1 + \eta \frac {\sigma}{2^\sigma} \ \mbox { and } \ \left. \mbox {det } Hess \right |_{\theta=\pi , \ z=0} = 1 - \eta \frac {\sigma}{2^\sigma}\, .
\eee
Then, it follows that the symmetric stationary solution is dynamically stable for any $\eta > - \eta^\star$, and the antisymmetric stationary solution is dynamically stable for any $\eta < \eta^\star$, where $\eta^\star = 2^\sigma/\sigma$.

\subsection {Dynamical stability of the asymmetrical stationary solutions.} \ For argument's sake let us assume  $\eta <0$. \ Then the symmetric stationary solution bifurcates and new asymmetrical solutions appear, they correspond to $\theta =0$ and the values of $z$ are the non zero solutions of the equation $f_+ (z,\eta )=0$ (in fact, we have assumed $\eta <0$; in the case of $\eta >0$, as considered in \S \ref {Sec3} for the sake of definiteness, then the stationary solutions corresponds to the roots $z$ of equation $f_- (z,\eta )=0$). \ A straightforward calculation gives that 
\bee
\left. 
\mbox {det } Hess 
\right |_{\theta =0} 
= \sqrt {1-z^2} \left [ (1-z^2)^{-3/2} + \frac {\eta \sigma}{4} \left (  
\left ( \frac {1+z}{2} \right )^{\sigma -1} + \left ( \frac {1-z}{2} \right )^{\sigma -1}  \right ) \right ]
\eee
By the relation $\eta =\eta (z)$ implicitly defined by the equation $f_+ (z,\eta )=0$ it follows that
\bee
\left. \mbox {det } Hess \right |_{\theta =0, \ \eta = \eta (z) } =  \frac {g(z)-g(-z)}{(1-z^2) \left [ (1+z)^\sigma - (1-z)^\sigma \right ] }
\eee
where it has been already proved that the equation $g(z)-g(-z)=0$ has a solution at $z=0$ with multiplicity $3$ (multiplicity $5$ if $\sigma=\sigma_{threshold}$). \ Since this equation has no other solution for $\sigma \le \sigma_{threshold}$, since $q(z)=q(-z)$ and since 
\bee
\lim_{z \to 1^-} \mbox {det} \left. Hess \right |_{\theta =0, \ \eta = \eta (z) } = + \infty 
\eee
then 
\bee
\mbox {det} \left. Hess \right |_{\theta =0, \ \eta = \eta (z) } > 0 \, , \ \forall z \not= 0 .
\eee
Then, the asymmetrical solutions, if there, are stable. \ On the other side, for $\sigma > \sigma_{threshold}$ then the equation $g(z)-g(-z)=0$ has three distinct solutions; hence, by means of the same arguments as before, it follows that the branch $(as2)$ is dynamically unstable and the branch $(as1)$ is dynamically stable. 

We can collect all these results as follows (see also Fig. \ref {Fig1}).

\begin {theorem} \label {Theorem2} 
Let us consider the stationary solutions of the {\bf  two level approximation} (\ref {Eq56}) that coincide, up to an exponentially small term, with the solutions given in Theorem \ref {Theorem1}. \ The symmetric and antisymmetric solutions of the two-level approximation are such that:

\begin {itemize}

\item [-] for any $\sigma >0$, the symmetric stationary solution $(s)$ is stable for any $\eta \ge - \eta^\star$, and it is unstable for any $\eta < - \eta^\star$;

\item [-] for any $\sigma >0$, the antisymmetric stationary solution $(a)$ is stable for any $\eta \le  \eta^\star$, and it is unstable for any $\eta > \eta^\star$.

\end {itemize}

The asymmetrical solutions of the two-level approximation are such that:

\begin {itemize}

\item [-] for any $\sigma \le \sigma_{threshold}$ the asymmetrical stationary solution $(as)$ is stable;

\item [-] for any $\sigma > \sigma_{threshold}$ the branch $(as2)$ of the asymmetrical stationary solution there exists for any $\eta^+ < |\eta |< \eta^\star$ and it is unstable, the other branch $(as1)$ of the asymmetrical stationary solution there exists for any $\eta^+ < |\eta |$ and it is stable.

\end {itemize}

\end {theorem}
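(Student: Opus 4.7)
The plan is to reduce the question to the planar Hamiltonian system (\ref{Eq58}) in canonical coordinates $(\theta,z)$, whose Hamiltonian $\mathcal H$ is already written out above. Since any Hamiltonian vector field has zero-trace Jacobian, the linearization at an equilibrium has characteristic polynomial $\mu^2+\det\mathrm{Hess}$, so each equilibrium is a nondegenerate center --- hence Lyapunov stable --- precisely when $\det\mathrm{Hess}>0$, and a saddle --- hence unstable --- when $\det\mathrm{Hess}<0$. The degenerate values $\det\mathrm{Hess}=0$ coincide with the bifurcation thresholds $\eta=\pm\eta^\star,\,\pm\eta^+$ and do not lie in the open stability intervals of the statement, so the whole task reduces to tracking the sign of $\det\mathrm{Hess}$ along each stationary branch produced by Theorem \ref{Theorem1}.

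For the symmetric and antisymmetric branches I would simply evaluate the Hessian at $(z=0,\theta=0)$ and $(z=0,\theta=\pi)$. A direct computation yields $\det\mathrm{Hess}=1\pm\eta\sigma/2^\sigma$, from which the thresholds $\mp\eta^\star=\mp 2^\sigma/\sigma$ fall out immediately. For the asymmetric branches the equilibrium is parametrised by the implicit relation $\eta=\eta(z)$ coming from $f_\pm(z,\eta)=0$, and the decisive step is the algebraic identity
\[
\det\mathrm{Hess}\Big|_{\theta=0,\,\eta=\eta(z)}=\frac{g(z)-g(-z)}{(1-z^2)\bigl[(1+z)^{\sigma}-(1-z)^{\sigma}\bigr]}.
\]
The denominator is positive on $(0,1)$, so along an asymmetric branch the sign of $\det\mathrm{Hess}$ is controlled by the very polynomial $h(z)=g(z)-g(-z)$ already analysed in Lemma \ref{Lemma4}. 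I expect this algebraic reduction --- identifying $\det\mathrm{Hess}$ on the branch with the polynomial whose roots control the bifurcation diagram --- to be the main analytical obstacle; once it is in place the classification becomes automatic.

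With the formula in hand the conclusion is bookkeeping against Lemma \ref{Lemma4}. For $\sigma\le\sigma_{threshold}$, $h$ has no zero on $(0,1)$ besides $z=0$, and the limit $\lim_{z\to 1^-}\det\mathrm{Hess}=+\infty$ pins its sign down as positive throughout, so the branch $(as)$ is stable wherever it exists. For $\sigma>\sigma_{threshold}$, Lemma \ref{Lemma4} produces two extra simple zeros $\pm z^+\in(0,1)$, across which $h$ changes sign. The branch $(as1)$ lives on $|z|>z^+$ (the strongly localized regime, with $|z|\to 1$ as $|\eta|\to\infty$) and inherits $\det\mathrm{Hess}>0$ from the limit at $z=1$, so it is stable; the branch $(as2)$ lives on $0<|z|<z^+$, where the sign is reversed, so $\det\mathrm{Hess}<0$ and $(as2)$ is a saddle, hence unstable. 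The case $\eta>0$ is handled identically after swapping the roles of $f_+$ and $f_-$ and of the symmetric and antisymmetric branches.
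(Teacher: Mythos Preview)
Your proposal is correct and follows essentially the same route as the paper: reduce to the planar Hamiltonian system in $(\theta,z)$, use the zero-trace property so that stability is governed by the sign of $\det\mathrm{Hess}$, compute $\det\mathrm{Hess}=1\pm\eta\sigma/2^\sigma$ at the (anti)symmetric points, and derive the identity $\det\mathrm{Hess}|_{\eta=\eta(z)}=h(z)/\bigl((1-z^2)[(1+z)^\sigma-(1-z)^\sigma]\bigr)$ on the asymmetric branch, after which Lemma~\ref{Lemma4} and the limit $z\to1^-$ give the sign classification. The paper carries out exactly these steps, including the same formula, the same appeal to $h(z)=g(z)-g(-z)$, and the same limit argument.
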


\section {Orbital stability} \label {Sec5}
In this section our aim is to study the orbital stability of the stationary solutions of the NLS (\ref {Eq1}). \ So far we have considered both cases of attractive and repulsive nonlinearity for any couple of eigenvalues $\lambda_\pm$. \ Hereafter we consider only the first two eigenvalues and we assume to be in the attractive nonlinearity, that is:

\begin {hypothesis} \label {Hyp4} Let $\lambda_\pm$ be the {\bf first two eigenvalues} of $H_0$. \ Let $\eta = \frac {\epsilon}{\omega} \langle \varphi_R^{\sigma +1}, g \varphi_R^{\sigma +1} \rangle$ be the effective nonlinearity parameter in Eq.(\ref {Eq14}) where $\langle \varphi_R^{\sigma +1}, g \varphi_R^{\sigma +1} \rangle >0$; we assume that
\bee
\epsilon < 0 \ \ \mbox { that is } \ \ \eta < 0 \, .
\eee
\end {hypothesis}

If we rescale the solution $\psi$ as $\phi = |\epsilon |^{1/2\sigma} \psi$, then equation (\ref {Eq1}) is equivalent to the equation
\be
i \hbar \frac {\partial \phi }{\partial t} 
= H_0 \phi - g |\phi |^{2\sigma} \phi, 
\quad  \| \phi \| =|\epsilon |^{1/2\sigma} . \label {Eq59}
\ee
The stationary solutions of the equation
\be
H_0 \phi_{\lambda ,\epsilon } - g |\phi_{\lambda ,\epsilon } |^{2\sigma } \phi_{\lambda ,\epsilon } - \lambda \phi_{\lambda,\epsilon } =0, \quad  \lambda = \Omega + \omega E \, , \label {Eq60}
\ee
are associated, by means of the scaling, to the stationary solutions $\psi_E^s$, $\psi_E^a$ and $\psi_E^{as}$ given in Theorem \ref {Theorem1} where $E=E(\epsilon )$ is a multivalued function and where the stationary solutions are now denoted by
\bee
\phi_{\lambda ,\epsilon }^s:& \mbox { symmetric stationary solution} \\ 
\phi_{\lambda ,\epsilon }^a:& \mbox { antisymmetric stationary solution} \\ 
\phi_{\lambda ,\epsilon }^{as}:& \mbox { asymmetrical stationary solution for } \sigma \le \sigma_{threshold} \\ 
\phi_{\lambda ,\epsilon }^{as1} \mbox { and } \ \phi_{\lambda ,\epsilon }^{as2} :& \mbox { asymmetrical stationary solutions for } \sigma > \sigma_{threshold}
\eee
If we consider a general stationary state, we denote the solution by $\phi_{\lambda,\epsilon}$ and $\psi_E$, but if we want to distinguish the branches, we insist, in such above way, by denoting $s$, $a$, $as$, $as1$ and $as2$, on each shoulder of solutions. 

Here, we consider the orbital stability for the symmetric stationary solution $\phi^s_{\lambda ,\epsilon }$ and for the asymmetrical stationary solutions $\phi^{as}_{\lambda ,\epsilon }$ that bifurcate from the symmetric one.

\begin {definition}
The family of nonlinear bound states $\{e^{i\alpha }\phi_{\lambda, \epsilon}, \alpha \in \R\}$ is said to be orbitally stable in $H^1(\R^d)$ if for any $\kappa>0$ there exists a $\delta>0$ such that if $\phi_0$ satisfies 
\be
\inf_{\alpha\in \R} \|\phi_0-e^{i\alpha}\phi_{\lambda, \epsilon}\|_{H^1} <\delta, \label {Eq61}
\ee 
then for all $t\ge 0$, the solution $\phi(t)$ of (\ref{Eq59}) with $\phi(0)=\phi_0$ exists and satisfies 
\bee 
\inf_{\alpha \in \R} \|\phi(\cdot,t) -e^{i\alpha}\phi_{\lambda, \epsilon}\|_{H^1}  <\kappa.
\eee
Otherwise, it is said to be unstable in $H^1(\R^d)$. 
\end {definition}

The main result of this section is the following:

\begin {theorem} \label{Theorem3} 
Fix any $\hbar>0$ be sufficiently small such that $\hbar \in (0,\hbar_3)$ for some $\hbar_3 >0$ small enough. \ Then, the following statements hold.

\begin{itemize}

\item Let $\sigma \le \sigma_{threshold}$. \ The symmetric solution corresponding to $z^{s}=\tilde{O}(e^{-\rho/\hbar})$ is orbitally stable in $H^1$ for $|\eta| < \eta^\star$. \ At the bifurcation point $\eta=\eta^\star$, there is an exchange of stability, that is, for $|\eta| > \eta^\star$, the asymmetric solution is stable in $H^1$ and the symmetric solution is unstable.

\item Let $\sigma > \sigma_{threshold}$. By Theorem \ref {Theorem1}, two couples of new asymmetric stationary states, denoted by $\psi^{as1}$ and $\psi^{as2}$ appears at $|\eta|=\eta^+$. \ For $|\eta| > \eta^+$, $\psi^{as1}$ is orbitally stable in $H^1$, $\psi^{as2}$ is unstable. \ 
On the other hand, 
the symmetric state is orbitally stable in $H^1$ for $|\eta|<\eta^\star,$ 
and unstable for $|\eta|>\eta^\star$.  

\end{itemize}

\end {theorem}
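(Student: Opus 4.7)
The plan is to apply the Grillakis-Shatah-Strauss (GSS) orbital stability theory as adapted by Kirr, Kevrekidis, Shlizerman and Weinstein to the double-well setting, and to push the verification of the GSS criteria down to the two-mode analysis already carried out in Section \ref{Sec4} via the Lyapunov-Schmidt reduction of Theorem \ref{Theorem1}. For the rescaled equation (\ref{Eq59}) the natural Lyapunov functional is
\[
S_\lambda(\phi) = \tfrac{1}{2}\langle \phi, H_0\phi\rangle - \tfrac{1}{2(\sigma+1)}\int g|\phi|^{2(\sigma+1)}\,dx - \tfrac{\lambda}{2}\|\phi\|^2,
\]
and every stationary state $\phi_{\lambda,\epsilon}$ of Theorem \ref{Theorem1} is a critical point. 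Setting $d(\lambda) = S_\lambda(\phi_{\lambda,\epsilon})$ one has $d'(\lambda) = -\tfrac{1}{2}\|\phi_{\lambda,\epsilon}\|^2$, and the GSS dichotomy asserts that, once the Hessian $S_\lambda''(\phi_{\lambda,\epsilon})$ has the right signature (one simple negative direction plus the phase kernel), orbital stability is equivalent to $d''(\lambda) > 0$.

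The first step is to verify the signature of the Hessian. Writing $\phi = u + iv$, the Hessian decomposes as a block diagonal of
\[
L_+ = H_0 - \lambda - (2\sigma+1)\,g|\phi_{\lambda,\epsilon}|^{2\sigma}, \qquad L_- = H_0 - \lambda - g|\phi_{\lambda,\epsilon}|^{2\sigma}.
\]
By the stationary equation $L_-\phi_{\lambda,\epsilon}=0$, and since under Hypothesis \ref{Hyp4} the ground-state branches of interest are close to a positive symmetric two-well profile or to a positive single-well profile, one has $L_-\ge 0$ with one-dimensional kernel spanned by $\phi_{\lambda,\epsilon}$. For $L_+$, Hypothesis \ref{Hyp3}, the $L^p$-bounds (\ref{Eq14}), and the exponentially small remainder $\|\psi_c\|_{H^2}=\tilde\asy (e^{-\rho/\hbar})$ from Theorem \ref{Theorem1} imply that the multiplicative perturbation $(2\sigma+1)g|\phi_{\lambda,\epsilon}|^{2\sigma}$ has operator norm $\tilde\asy (e^{-\rho/\hbar})$, small compared to the $O(\hbar)$ spectral gap (\ref{Eq9}) separating the doublet $\{\lambda_+,\lambda_-\}$ from the rest of $\sigma(H_0)$. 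Standard perturbation theory then localizes all low-lying eigenvalues of $L_+$ in neighbourhoods of $\{\lambda_+-\lambda,\lambda_--\lambda\}$ and identifies its Morse index with that of the $2\times 2$ matrix obtained by projecting $L_+$ onto $\mathrm{span}\{\varphi_R,\varphi_L\}$.

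The second step is to compute this projected matrix together with $d''(\lambda)$ from the reduced equations (\ref{Eq48})-(\ref{Eq49}). Differentiating $\|\phi_{\lambda,\epsilon}\|^2$ with respect to $\lambda$ via $\lambda = \Omega + \omega E$ and the explicit dependence $E=E(\eta,z)$, $z=z(\eta)$ on each branch, reduces $d''(\lambda)$ to a rational expression in $z$, $\eta(z)$ and $\eta'(z)$; its sign is then governed by $\eta'(z)$ and by the analysis of the roots of $h(z)$ already performed in Lemma \ref{Lemma4}. In parallel, the projected $2\times 2$ matrix for $L_+$ coincides, after elementary manipulations, with the Hessian of the reduced Hamiltonian $\mathcal H(z,\theta)$ of Section \ref{Sec4}, whose determinant was computed there branch by branch. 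Combining the sign of $\det\mathrm{Hess}$ with that of $d''(\lambda)$ reproduces exactly the stability picture of Theorem \ref{Theorem3}: exchange of stability at $\eta = \pm\eta^\star$ on the supercritical pitchfork for $\sigma\le\sigma_{threshold}$, and the two-branch pattern with $\phi^{as1}$ stable, $\phi^{as2}$ unstable, together with loss of stability of the symmetric branch at $\eta=\pm\eta^\star$, for $\sigma > \sigma_{threshold}$.

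The main obstacle is the uniform control of the Hessian signature across the bifurcation point $\eta = \pm\eta^\star$: there an eigenvalue of the projected $2\times 2$ matrix passes through zero while the nonlinear perturbation itself is only $\tilde\asy (e^{-\rho/\hbar})$, so the perturbative argument must be made uniform both in $\eta$ near $\eta^\star$ and in $\hbar\to 0$. For this one uses the smooth dependence of the stationary data of Theorem \ref{Theorem1} on $(a_R,a_L,E)$ recorded in Remark \ref{Remark11} and the derivative estimate (\ref{Eq42}) to propagate the two-mode computation through the full Hessian. Once this uniformity is established, Theorem \ref{Theorem3} follows from the combination of Theorem \ref{Theorem2} with the sign computation of $d''(\lambda)$ through the GSS criterion.
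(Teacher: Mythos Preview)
Your overall architecture is the same as the paper's: apply the Grillakis--Shatah--Strauss criteria (Proposition~\ref{Proposition1}) and verify the three ingredients (Morse index of $L_+$, nonnegativity of $L_-$, slope condition) by Lyapunov--Schmidt reduction to the two--mode data of Theorem~\ref{Theorem1}. However, two of your verification steps contain genuine gaps.

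First, your argument for $L_-\ge 0$ on the asymmetric branches does not work. You write that the branches of interest are ``close to a positive symmetric two-well profile or to a positive single-well profile'' and deduce nonnegativity from that. Positivity of $\phi^s_{\lambda,\epsilon}$ does give $L_-[\phi^s]\ge 0$ immediately, but for the asymmetric solutions $\phi^{as}$, $\phi^{as1}$, $\phi^{as2}$ there is no positivity information available (the paper says so explicitly before Lemma~\ref{Lemma6}). Being $H^1$-close to a positive function does not imply that the solution itself is positive, and nonnegativity of $L_-$ for a sign-changing ground state is false in general. The paper instead repeats the full Lyapunov--Schmidt reduction for the eigenvalue problem $L_-u=\omega\mu u$, obtains a perturbed $2\times 2$ matrix $N'$ with entries $\alpha'=E+|\eta||a_R^\lambda|^{2\sigma}$, $\beta'=E+|\eta||a_L^\lambda|^{2\sigma}$, and checks directly that $\alpha'\beta'\sim 1$ and $\alpha'+\beta'<0$, so that one reduced eigenvalue is the known kernel direction and the other is strictly positive. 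You need this computation; the positivity shortcut is not available.

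Second, your claim that the projected $2\times 2$ matrix for $L_+$ ``coincides, after elementary manipulations, with the Hessian of the reduced Hamiltonian $\mathcal H(z,\theta)$ of Section~\ref{Sec4}'' is asserted but not established, and the paper does not take this route. The paper reduces the eigenvalue problem $L_+u=\omega\mu u$ directly (Lemma~\ref{Lemma5}), obtaining a matrix $N$ with $\alpha=E+(2\sigma+1)|\eta||a_R^\lambda|^{2\sigma}$, $\beta=E+(2\sigma+1)|\eta||a_L^\lambda|^{2\sigma}$, and determines the Morse index from the sign of $\alpha\beta-1$. For the symmetric branch this gives the threshold $|\eta|=\eta^\star$; for the asymmetric branches it requires a separate algebraic computation showing $\ell(z,\sigma)-1<0$. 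These computations and the determinant $\det\mathrm{Hess}\,\mathcal H$ of Section~\ref{Sec4} do yield the same thresholds, but the two matrices live in different coordinates ($(a_R,a_L)$ versus $(z,\theta)$) and are not literally equal; you would have to supply the change of variables and check that the signatures agree, which is not ``elementary'' without writing it down. Similarly, your slope computation is only sketched; the paper devotes Lemmata~\ref{Lemma7}--\ref{Lemma9} to controlling $\partial\psi_c/\partial\eta$ and inverting a $5\times 5$ matrix $M$ to extract $dE/d\eta$ on each branch, with the sign ultimately governed by the function $h(z)=g(z)-g(-z)$ from Lemma~\ref{Lemma4}.
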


As a standard method to prove the orbital stability of a stationary solution $\phi_{\lambda , \epsilon}$, the following proposition is well known. \ We first define $L_+^{\lambda , \epsilon }$ and $L_-^{\lambda,\epsilon}$, which are respectively the real and the imaginary part of the linearized operators around a real valued stationary solution $\phi_{\lambda , \epsilon}$ :  
\bee
L_+^{\lambda , \epsilon } \equiv L_+ [\phi_{\lambda, \epsilon}] 
= H_0 - \lambda - (2\sigma +1) 
g |\phi_{\lambda ,\epsilon } |^{2\sigma },
\eee
\bee
L_-^{\lambda,\epsilon}\equiv L_- [\phi_{\lambda, \epsilon}] 
= H_0 - \lambda - g |\phi_{\lambda ,\epsilon } |^{2\sigma }.  
\eee
It is clear that $L_-^{\lambda,\epsilon} \phi_{\lambda, \epsilon} =0$ since $\phi_{\lambda, \epsilon}$ is a solution of (\ref{Eq60}). \ 
Moreover, $L_+^{\lambda,\epsilon}$ and $L_-^{\lambda,\epsilon}$ are self-adjoint operators on $L^2(\R^d)$ with domain $H^2(\R^d)$. \ 
The essential spectrum of these two operators coincides with the interval $[V_{\infty}^- -\lambda, \infty)$ with $V_{\infty}^- -\lambda >0$, since $\phi_{\lambda, \epsilon}$ vanishes at infinity; indeed, $V$ is bounded, and we can apply the proof of Theorem 1 in \cite{FuOz}, regarding the term 
$V \phi_{\lambda,\epsilon}$ of (\ref{Eq60}) as one of nonlinear parts. \ There are also finitely many of discrete spectrum and  $\sigma_d(L_{\pm}^{\lambda,\epsilon}) \subset (-\infty, V_{\infty}-\lambda)$ (see \cite{BeSh}).  

In order to prove the orbital stability we make use of the following criteria (see, e.g., \cite{G} or Part I of \cite{GSS}).

\begin{proposition} \label{Proposition1}  
\ Suppose that $L_-^{\lambda,\epsilon}$ is nonnegative. 
\ Let $F(\lambda ) = \| \phi_{\lambda ,\epsilon } \|^2$. 

\begin{itemize}

\item[(1)] If $L_+^{\lambda,\epsilon}$ has only one negative eigenvalue, and $dF/d\lambda <0,$ then, $\phi_{\lambda ,\epsilon}$ is stable in $H^1(\R^d)$. 

\item[(2)] If $L_+^{\lambda,\epsilon}$ has only one negative eigenvalue, and $dF/d\lambda >0,$  then, $\phi_{\lambda ,\epsilon}$ is unstable in $H^1(\R^d)$. 

\item[(3)] If $L_+^{\lambda,\epsilon}$ has at least two negative eigenvalues, then, $\phi_{\lambda ,\epsilon}$ is unstable in $H^1(\R^d)$. 
 
\end{itemize}
\end{proposition}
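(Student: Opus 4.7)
The plan is to apply Proposition \ref{Proposition1} branch by branch. For each stationary solution $\phi^s_{\lambda,\epsilon},\phi^{as}_{\lambda,\epsilon},\phi^{as1}_{\lambda,\epsilon},\phi^{as2}_{\lambda,\epsilon}$ exhibited in Theorem \ref{Theorem1}, I must verify (i) that $L_-^{\lambda,\epsilon}$ is nonnegative, (ii) how many negative eigenvalues $L_+^{\lambda,\epsilon}$ has, and (iii) the sign of $F'(\lambda)$ for $F(\lambda)=\|\phi_{\lambda,\epsilon}\|^2=|\epsilon|^{1/\sigma}$ computed along the branch, where $\epsilon=\omega\eta/c$ and $\lambda=\Omega+\omega E(\eta)$ are coupled via the bifurcation diagram.

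For (i) I would use a positivity argument. Since $L_-^{\lambda,\epsilon}\phi_{\lambda,\epsilon}=0$ by (\ref{Eq60}), if $\phi_{\lambda,\epsilon}$ does not change sign then Perron--Frobenius forces $0$ to be the ground-state eigenvalue of $L_-^{\lambda,\epsilon}$ and hence $L_-^{\lambda,\epsilon}\ge 0$. On the symmetric branch, $\phi^s_{\lambda,\epsilon}\approx|\epsilon|^{1/(2\sigma)}\varphi_+$ is positive because $\varphi_+$ is the linear ground state and the correction $\psi_c$ is exponentially small in $H^2\hookrightarrow L^\infty$. On the asymmetric branches the phase $\theta^{as}=0$ (for $\eta<0$, by Remark \ref{Remark7}) means that the coefficients $a_R,a_L$ in (\ref{Eq26}) have the same sign, so $\phi$ is essentially a positive superposition of the localized states $\varphi_R,\varphi_L$, again positive up to exponentially small tails. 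This positivity argument visibly fails for the antisymmetric branch, which is precisely why Theorem \ref{Theorem3} does not treat it.

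Step (ii) is the heart of the proof and I would attack it by the same Lyapunov--Schmidt reduction that produced the stationary solutions. Split $L^2=\mathcal{E}\oplus\Pi_c L^2$ with $\mathcal{E}=\mathrm{span}\{\varphi_+,\varphi_-\}$ and write $L_+^{\lambda,\epsilon}$ as a block matrix. On $\Pi_c L^2$ the operator is a perturbation of $\Pi_c(H_0-\lambda)\Pi_c$, whose spectrum is bounded below by $C\hbar$ by the gap estimate (\ref{Eq9}); the nonlinear piece $-(2\sigma+1)g|\phi_{\lambda,\epsilon}|^{2\sigma}$ has operator norm $\mathcal{O}(|\eta|\omega\hbar^{-d\sigma/2})$, which is exponentially small since $\omega$ is, and therefore cannot push any eigenvalue through zero. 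All the negative spectrum of $L_+^{\lambda,\epsilon}$ therefore arises from the $2\times 2$ block on $\mathcal{E}$; by Lemma \ref{Lemma2} and the exponential smallness of $\psi_c$, this block reduces, modulo exponentially small errors, to the Hessian of the effective two-mode Hamiltonian (\ref{Eq57}) at the corresponding critical point. Consequently the number of negative eigenvalues of $L_+^{\lambda,\epsilon}$ equals the number of negative eigenvalues of $\mathrm{Hess}\,\mathcal{H}$ computed in Section \ref{Sec4}, after discarding the zero-mode along the gauge orbit that is quotiented away in Proposition \ref{Proposition1}.

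For (iii) the computation is explicit: since $F(\lambda)=(\omega/c)^{1/\sigma}|\eta|^{1/\sigma}$ one has $\mathrm{sgn}\,F'(\lambda)=\mathrm{sgn}(\eta)\cdot\mathrm{sgn}(dE/d\eta)$, and the sign is read off directly from the slopes of the branches in Figures \ref{Fig2}--\ref{Fig4}. Feeding (i)--(iii) into Proposition \ref{Proposition1} yields the asserted stability/instability on each branch and the exchange of stability at $|\eta|=\eta^\star$. The main obstacle is step (ii): one must control the spectrum of $L_+^{\lambda,\epsilon}$ uniformly in $\eta$ as it crosses the critical values $\eta^\star$ and $\eta^+$, in particular ensuring that the eigenvalue which crosses zero at each bifurcation genuinely comes from the reduced $2\times 2$ block and that the exponentially small remainder $\psi_c$ does not generate spurious low-lying eigenvalues in $\Pi_c L^2$. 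The hypothesis $\hbar\in(0,\hbar_3)$ in the statement is precisely what is needed to make this spectral separation quantitative.
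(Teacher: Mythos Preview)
Your proposal does not prove Proposition \ref{Proposition1}. Proposition \ref{Proposition1} is an abstract orbital stability criterion for NLS-type equations; the paper does not prove it either, but simply cites it from \cite{G} and Part I of \cite{GSS}. What you have written is instead an outline of how to \emph{apply} Proposition \ref{Proposition1} to derive Theorem \ref{Theorem3}. So there is a mismatch between the target statement and what you actually argue.

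As an outline of the proof of Theorem \ref{Theorem3}, your plan is close to the paper's, with one substantive deviation. For the nonnegativity of $L_-^{\lambda,\epsilon}$ on the asymmetrical branches you invoke a Perron--Frobenius argument based on the positivity of $\phi^{as}_{\lambda,\epsilon}$. The paper explicitly avoids this: it says ``we do not know the sign of the asymmetric solutions'' and instead reruns the Lyapunov--Schmidt reduction on the eigenvalue problem for $L_-^{\lambda,\epsilon}$ (Lemma \ref{Lemma6}), showing that the reduced $2\times 2$ matrix has one eigenvalue that must be the known zero mode and one strictly positive eigenvalue. Your claim that the asymmetrical solution is positive because $a_R,a_L>0$ and $\psi_c$ is small in $H^2$ would require $\varphi_R,\varphi_L\ge 0$ pointwise, which does not follow from the localization estimates in \S\ref{Sec2} (these are $L^p$-type statements, and $\varphi_L=(\varphi_+-\varphi_-)/\sqrt{2}$ need not be signed). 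For step (iii), the paper does not ``read off'' the sign of $dE/d\eta$ from figures but computes it via implicit differentiation (Lemmas \ref{Lemma7}--\ref{Lemma9}), which is needed in particular to distinguish the $(as1)$ and $(as2)$ branches through the sign of $h(z^\lambda)$. Your step (ii) is essentially the paper's Lemma \ref{Lemma5}.
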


\begin{remark}
For the instability (3), it is enough to find a vector $p\in H^1$ such that 
\begin{equation} \label{Eq62}
\langle L_+^{\lambda,\epsilon} p,p \rangle <0, \quad p \perp \phi_{\lambda,\epsilon} \mbox{~in~} L^2. 
\end{equation}
(see for e.g., \cite{CoCoOhIHP,G}). \ As we will see below, ``$L_-^{\lambda,\epsilon}$ is nonnegative and $L_+^{\lambda,\epsilon}$ has two negative eigwnvalues'' occurs only for the symmetric stationary solution $\phi_{\lambda,\epsilon}^s$. \ In this case, we can find the normalized antisymmetric solution $\frac{\phi_{\lambda,\epsilon}^a}{\|\phi_{\lambda,\epsilon}^a\|}$ as the vector $p$ satisfying the property (\ref{Eq62}) for $\hbar$ small.   
\end{remark}

We shall therefore check the following properties:

\begin{itemize}

\item the number of negative eigenvalues of $L_+^{\lambda,\epsilon}$;

\item $L_-^{\lambda,\epsilon}$ is a nonnegative operator;

\item (Slope condition) the sign of the function $d F (\lambda )/d\lambda $.  

\end{itemize}

\subsection {Number of negative eigenvalues of $L_+^{\lambda,\epsilon}$}

First we consider the number of negative eigenvalues of $L_+^{\lambda,\epsilon}$. \ We will prove that:

\begin{lemma} \label{Lemma5} 
\ Let $\hbar^\star >0$ small enough as in Theorem \ref {Theorem1}; there exists $\hbar_1 \in (0, \hbar^\star )$ such that for any $\hbar \in (0, \hbar_1)$ the following statements are satisfied.

\begin {itemize} 

\item [(i)] Let $\lambda$ be the energy level associated to the symmetric stationary state $\phi_{\lambda , \epsilon } 
= \phi^s_{\lambda , \epsilon }$. \ Then, $L_+^{\lambda,\epsilon}$ admits only one negative eigenvalue provided that $|\eta | < \eta^\star$. \ On the other hand, $L_+^{\lambda,\epsilon}$ admits two negative eigenvalues provided that $|\eta | > \eta^\star$.  

\item [(ii)] Let $\lambda$ be the energy level associated to the asymmetrical stationary state $\phi_{\lambda , \epsilon }= \phi^{as}_{\lambda ,\epsilon }$ if $\sigma \le \sigma_{threshold}$, and $\phi_{\lambda , \epsilon } = \phi^{as1}_{\lambda , \epsilon }$ and $\phi_{\lambda , \epsilon } = \phi^{as2}_{\lambda , \epsilon }$, if $\sigma > \sigma_{threshold}$. \ Then, $L_+^{\lambda,\epsilon}$ admits only one negative eigenvalue. 

\end{itemize} 

\end{lemma}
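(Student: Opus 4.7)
The plan is to reduce the spectral analysis of $L_+^{\lambda,\epsilon}$ to a $2\times 2$ matrix problem on the doublet subspace $\Pi L^2=\mathrm{span}\{\varphi_+,\varphi_-\}$, exploiting the spectral gap (\ref{Eq10}) of order $\hbar$ separating $\{\lambda_+,\lambda_-\}$ from the rest of $\sigma(H_0)$. On $\Pi_c L^2$, this gap gives $H_0-\lambda\ge C\hbar/2$ once $|\omega E|=\asy(\omega)$ is absorbed, while the multiplicative perturbation $(2\sigma+1)g|\phi_{\lambda,\epsilon}|^{2\sigma}$ has amplitude $\asy(|\eta|\omega)$ by (\ref{Eq14}), (\ref{Eq22}) and (\ref{Eq30}), which is exponentially small compared to $C\hbar$. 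Hence $\Pi_c L_+^{\lambda,\epsilon}\Pi_c\ge C\hbar/4>0$, and a Feshbach--Schur reduction puts the negative eigenvalues of $L_+^{\lambda,\epsilon}$ in bijection with those of $M_0:=\Pi L_+^{\lambda,\epsilon}\Pi$, up to corrections that are exponentially small: since $\Pi(H_0-\lambda)\Pi_c=0$, the off-diagonal block $\Pi L_+^{\lambda,\epsilon}\Pi_c=-(2\sigma+1)\Pi g|\phi_{\lambda,\epsilon}|^{2\sigma}\Pi_c$ is controlled by $\|\psi_c\|_{H^2}=\tilde{\asy}(e^{-\rho/\hbar})$ from Theorem \ref{Theorem1}.

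Working in the single-well basis $\{\varphi_R,\varphi_L\}$, the $H_0-\lambda$ contribution gives $-\omega E$ on the diagonal and $-\omega$ off-diagonally. For the nonlinear part, the decomposition $\phi_{\lambda,\epsilon}=|\epsilon|^{1/2\sigma}(a_R\varphi_R+a_L\varphi_L)+\tilde{\asy}(e^{-\rho/\hbar})$ combined with the single-well product estimate (\ref{Eq20}) yields $|\phi_{\lambda,\epsilon}|^{2\sigma}\approx|\epsilon|(p^{2\sigma}|\varphi_R|^{2\sigma}+q^{2\sigma}|\varphi_L|^{2\sigma})$ (with $p=|a_R|$, $q=|a_L|$) up to exponentially small terms, and (\ref{Eq30}) together with $\eta=\epsilon c/\omega$ gives
\[
\omega^{-1}M_0=\begin{pmatrix}-E-(2\sigma+1)|\eta|\,p^{2\sigma} & -1\\ -1 & -E-(2\sigma+1)|\eta|\,q^{2\sigma}\end{pmatrix}+\tilde{\asy}(e^{-c/\hbar}).
\]

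For part (i), the symmetric state has $p=q=1/\sqrt{2}$ and $E^s=-1-|\eta|/2^\sigma$; the two diagonal entries coincide, so $\omega^{-1}M_0$ diagonalizes in $\{\varphi_+,\varphi_-\}$ with eigenvalues $-2\sigma|\eta|/2^\sigma$ (in the $\varphi_+$ direction) and $2(1-|\eta|/\eta^\star)$ (in the $\varphi_-$ direction), $\eta^\star=2^\sigma/\sigma$. The $\mathcal{S}$-symmetry of $\phi^s$ forces $\langle\varphi_+,g|\phi^s|^{2\sigma}\varphi_-\rangle=0$ exactly, so this diagonalization is unaffected by the exponential corrections. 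The first eigenvalue is always negative and the second changes sign at $|\eta|=\eta^\star$, giving the asserted count. For part (ii), the stationary equations (\ref{Eq44}) on the asymmetric branch give $E=-q/p-|\eta|p^{2\sigma}=-p/q-|\eta|q^{2\sigma}$, simplifying the diagonal of $\omega^{-1}M_0$ to $q/p-2\sigma|\eta|p^{2\sigma}$ and $p/q-2\sigma|\eta|q^{2\sigma}$. Using the branch identity $|\eta|pq(p^{2\sigma}-q^{2\sigma})=p^2-q^2$, a short computation yields
\[
\det(\omega^{-1}M_0)=-\frac{2\sigma|\eta|}{pq(p^{2\sigma}-q^{2\sigma})}\bigl[(u^{2\sigma+1}-v^{2\sigma+1})-(2\sigma+1)(u-v)(uv)^\sigma\bigr],
\]
with $u=p^2$, $v=q^2$. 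Writing $(u^{2\sigma+1}-v^{2\sigma+1})/(u-v)=(2\sigma+1)\int_0^1(v+t(u-v))^{2\sigma}\,dt$ and invoking Jensen's inequality together with the AM--GM bound $((u+v)/2)^2\ge uv$, the bracket is strictly positive for $u\neq v$ and $\sigma>0$, so $\det M_0<0$ and $M_0$ has exactly one negative eigenvalue.

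The main technical obstacle is the Feshbach step: one must show the Schur-complement $\Pi L_+^{\lambda,\epsilon}\Pi_c(\Pi_c L_+^{\lambda,\epsilon}\Pi_c-\mu)^{-1}\Pi_c L_+^{\lambda,\epsilon}\Pi$ is exponentially small \emph{uniformly} for $\mu$ in a fixed neighbourhood of $0$, so that the count of negative eigenvalues cannot be perturbed; this follows because, although the nonlinear perturbation is of size $|\eta|\omega$, its off-diagonal coupling is suppressed by $\|\psi_c\|_{H^2}=\tilde{\asy}(e^{-\rho/\hbar})$. A secondary difficulty is the positivity of the bracketed quantity for the whole range $\sigma>0$: the regime $\sigma\ge 1/2$ is immediate by the strict convexity of $t\mapsto t^{2\sigma}$, but for $0<\sigma<1/2$ one must group the integrand symmetrically, e.g., via the substitution $u=a^2$, $v=b^2$ and the elementary identity $(a-b)^2\ge 0$.
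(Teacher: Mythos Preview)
Your architecture matches the paper's: both reduce the eigenvalue problem for $L_+^{\lambda,\epsilon}$ to a $2\times2$ problem on $\mathrm{span}\{\varphi_R,\varphi_L\}$ using the $\asy(\hbar)$ gap (the paper phrases this as a Lyapunov--Schmidt reduction and an implicit--function argument for the $\nu$--dependent characteristic equation, which is the same mechanism as your Feshbach--Schur step), and both then analyse the sign of the $2\times2$ determinant. Your $\omega^{-1}M_0$ is exactly $-N$ in the paper's notation, so the determinant you compute is the paper's $\alpha\beta-1$. For the asymmetric branch your formula
\[
\det(\omega^{-1}M_0)=-\frac{2\sigma|\eta|}{pq(p^{2\sigma}-q^{2\sigma})}\Bigl[(u^{2\sigma+1}-v^{2\sigma+1})-(2\sigma+1)(u-v)(uv)^\sigma\Bigr],\qquad u=p^2,\ v=q^2,
\]
is a genuinely cleaner route than the paper's change of variable $y=(1-z)/(1+z)$ leading to the polynomial $q(y)$; in fact the bracket is strictly positive for all $\sigma>0$ and $u\neq v$ by the one--line calculus argument $g(t):=t^{2\sigma+1}-1-(2\sigma+1)(t-1)t^\sigma$, $g(1)=0$, $g'(t)=(2\sigma+1)t^{\sigma-1}\bigl(t^{\sigma+1}-(\sigma+1)t+\sigma\bigr)\ge0$, so your proposed Jensen/AM--GM split at $\sigma=1/2$ is unnecessary.

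One point is misattributed. The Schur complement is \emph{not} small because of $\|\psi_c\|_{H^2}$; the off--diagonal block $\Pi L_+^{\lambda,\epsilon}\Pi_c=-(2\sigma+1)\Pi g|\phi_{\lambda,\epsilon}|^{2\sigma}\Pi_c$ has norm $\asy(|\eta|\omega)$ simply because $\|g|\phi_{\lambda,\epsilon}|^{2\sigma}\|_{L^\infty}=\asy(|\eta|\omega)$ (this is the estimate you already used to show $\Pi_cL_+\Pi_c>0$), and it does not vanish when $\psi_c=0$. Combined with $\|(\Pi_cL_+\Pi_c-\mu)^{-1}\|=\asy(\hbar^{-1})$ this gives a Schur correction of size $\asy(\omega^2/\hbar)=\omega\cdot\tilde\asy(e^{-\rho/\hbar})$, which is exponentially small relative to the $\asy(\omega)$ eigenvalues of $M_0$ --- this is exactly what is needed, and it is the same estimate the paper obtains for its correction matrix $\nu C$. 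So your Feshbach step is fine, but the justification should be rewritten accordingly.
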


\begin {proof} 
We set 
\bee
&&
\phi_{\lambda ,\epsilon } = 
a_R^{\lambda , \epsilon} \varphi_R + a_L^{\lambda , \epsilon} \varphi_L 
+ \phi_c^{\lambda , \epsilon}, \quad 
| a_R^{\lambda , \epsilon} |^2 + |a_L^{\lambda , \epsilon}|^2 
+ \| \phi_c^{\lambda , \epsilon} \|^2 = |\epsilon |^{1/\sigma }, \\
&& 
\| \phi_c^{\lambda , \epsilon} \| 
= |\epsilon |^{1/2\sigma } \| \psi_c \|, 
\quad 
\psi_c=\psi_E-(a_R^\lambda \varphi_R +a_L^\lambda \varphi_L),
\eee
where $\psi_E$ is a stationary solution obtained in Theorem \ref{Theorem1}. 

We consider the eigenvalue problem $L_+^{\lambda,\epsilon} u = (\omega \mu ) u$ with $u \in H^2(\R^d)$ and where
\be
|\mu \omega |\le C \hbar^2 \, . \label {Eq63}
\ee
By setting $u=a_R \varphi_R +a_L \varphi_L + u_c$ with $u_c \in \Pi_c L^2$, then the eigenvalue problem takes the following form
\be \label{Eq64}
\left \{
\begin {array}{lcl}  
\omega \mu a_R &=& a_R \Omega -a_L \omega - \lambda a_R - (2\sigma +1)  
\langle \varphi_R , g |\phi_{\lambda , \epsilon }|^{2\sigma} u \rangle \\ 
\omega \mu a_L &=& a_L \Omega - a_R \omega - \lambda a_L - (2\sigma +1) 
\langle \varphi_L , g |\phi_{\lambda , \epsilon }|^{2\sigma} u \rangle \\ 
\omega \mu u_c &=& (H_0 - \lambda ) u_c - \Pi_c (2\sigma +1) 
g |\phi_{\lambda , \epsilon }|^{2\sigma} u 
\end {array}
\right. \, . 
\ee
The last equation reads as 
\bee 
&&\left [I - \left [ H_0 - \lambda - \omega \mu \right ]^{-1} 
\Pi_c (2\sigma +1) g |\phi_{\lambda , \epsilon }|^{2\sigma} \right ] u_c \\
&& = \left(H_0 - \lambda - \omega \mu \right )^{-1}
\Pi_c (2\sigma +1) g |\phi_{\lambda , \epsilon }|^{2\sigma}(a_R \varphi_R +a_L \varphi_L) 
\eee
Since $H_0 - \lambda \ge C \hbar$, when restricted to $\Pi_c L^2$, and since (\ref {Eq63}), then we have  
\bee
\|\left( H_0 - \lambda - \omega \mu \right )^{-1}\Pi_c \|_{\mathcal{L}(L^2\to H^2)} \le C_1 \hbar^{-1} \, . 
\eee
Here, we recall that, from (\ref{Eq20}) and (\ref{Eq36}), 
\bee
\| g |\phi_{\lambda , \epsilon }|^{2\sigma} \| 
= |\epsilon | \| g |\psi_{E(\epsilon)}|^{2\sigma} \| 
\le C |\epsilon | \hbar^{1-\alpha_0} 
\eee
with $\alpha_0=1+\frac{d(2\sigma-1)}{4}$. \ Thus, if $\omega \mu =O(\hbar^2),$ we get from (\ref{Eq17}), for sufficiently small $\hbar,$ 
\bee
\| \left(H_0 - \lambda - \omega \mu \right)^{-1} 
\Pi_c (2\sigma +1) g |\phi_{\lambda , \epsilon }|^{2\sigma}\|_{\mathcal{L}(L^2\to H^2)} 
\le C_2 (2\sigma+1) |\epsilon| \hbar^{-\alpha_0} \le \frac{1}{2}. 
\eee
Namely, if $\mu $ satisfies the condition (\ref {Eq63}) then the inverse of the operator 
\bee
I - \left [ H_0 - \lambda - \omega \mu \right ]^{-1} \Pi_c (2\sigma +1) g |\phi_{\lambda , \epsilon }|^{2\sigma}
\eee
exists. \ Accordingly, the third equation in (\ref {Eq64}) has a solution  
\bee
u_c &:= & u_c ( \mu, \lambda ) \\ 
&=& Q[\mu, \phi_{\lambda,\epsilon}](a_R \varphi_R +a_L \varphi_L),
\eee
where 
\bee
Q[\mu, \phi_{\lambda,\epsilon}]
&=&
\left [I - \left( H_0 - \lambda - \omega \mu \right)^{-1} 
\Pi_c (2\sigma +1) g |\phi_{\lambda , \epsilon }|^{2\sigma} \right ]^{-1} \\
&& \times \left( H_0 - \lambda - \omega \mu \right)^{-1} 
\Pi_c (2\sigma +1) g |\phi_{\lambda , \epsilon }|^{2\sigma} : L^2(\R^d) \to H^2(\R^d), 
\eee
and 
\begin{equation*}
\|Q[\mu, \phi_{\lambda,\epsilon}]\|_{\mathcal{L}(L^2\to H^2)} \le C_{\sigma}|\epsilon| \hbar^{-\alpha_0}. 
\end{equation*}
The bound $C_{\sigma}$ is uniform in $\hbar$ on $D$, where $D$ is defined in Lemma \ref{Lemma3}, and for any $\mu$ such that $|\mu \omega | \le C \hbar^2$. \ In fact, by the same arguments the same estimate holds true also for the derivative of $Q$ with respect to $\mu$:
\begin{equation} \label{Eq65} 
\left \| \frac {\partial Q}{\partial \mu} \right \|_{\mathcal{L}(L^2\to H^2)} \le C |\epsilon | \hbar^{-\alpha_0'}
\end{equation}
for some $\alpha_0' >0$. \ We insert this expression of $u_c$ into the system (\ref{Eq64}), and we have Lyapunov-Schmidt reduction of (\ref{Eq64}) as follows. 
\bee
\left \{
\begin {array}{lcl}
\omega \mu a_R &=& a_R \Omega - a_L \omega - \lambda a_R \\
&& - (2\sigma +1)  
\langle \varphi_R , g |\phi_{\lambda , \epsilon }|^{2\sigma} 
(I+Q(\mu, \phi_{\lambda, \epsilon}))(a_R \varphi_R +a_L \varphi_L) \rangle, \\ 
\omega \mu a_L &=& a_L \Omega - a_R \omega - \lambda a_L \\
&& - (2\sigma +1) 
\langle \varphi_L , g |\phi_{\lambda , \epsilon }|^{2\sigma} 
(I+Q(\mu, \phi_{\lambda, \epsilon}))(a_R \varphi_R +a_L \varphi_L) \rangle.
\end {array}
\right. 
\eee
This system can be rewritten under the following form. 
\be \label{Eq66}  
(N+\mu I - \nu C) 
\begin{pmatrix}
a_R \\
a_L
\end{pmatrix}
=
\begin{pmatrix}
0\\
0 
\end{pmatrix}
,
\ee
where we recall that $\lambda = \Omega + \omega E$ and where 
\bee
&& N=
\begin{pmatrix}
\alpha, & 1 \\
1, & \beta 
\end{pmatrix}
, \quad I=
\begin{pmatrix}
1 & 0 \\
0 & 1 
\end{pmatrix}
, \quad C=
\begin{pmatrix}
C_1, & C_2 \\
C_3, & C_4 
\end{pmatrix}
,\\
&& \alpha = E + (2\sigma +1) |\eta | |a_R^\lambda |^{2\sigma}, \quad 
\beta = E + (2\sigma +1) |\eta | |a_L^\lambda |^{2\sigma},
\eee
and $\nu=e^{-\gamma \rho' /\hbar}$ for any $\rho' \in (0,\rho)$ as in (\ref{Eq43}). \ For $\nu \ne 0$, we have put
\bee
C_1 &=&C_1(a_R^{\lambda}, a_L^{\lambda}, E, \mu;\hbar) =-\frac{(2\sigma+1)|\epsilon|}{\nu \omega}
\Big\{\langle \varphi_R, g(|\psi_E|^{2\sigma}-|a_R^{\lambda} \varphi_R|^{2\sigma})
\varphi_R \rangle \\
&& \hspace{6cm}+ \langle \varphi_R, g|\psi_E|^{2\sigma}Q(\mu,\phi_{\lambda, \epsilon})\varphi_R \rangle\Big\},\\
C_2 &=&C_2(a_R^{\lambda}, a_L^{\lambda}, E, \mu;\hbar) = -\frac{(2\sigma+1)|\epsilon|}{\nu \omega}
\Big\{\langle \varphi_R, g|\psi_E|^{2\sigma} (I+Q(\mu,\phi_{\lambda, \epsilon}))\varphi_L \rangle \Big\}, \\
C_3 &=& \bar{C_2},\\ 
C_4 &=& C_4(a_R^{\lambda}, a_L^{\lambda}, E, \mu;\hbar) = -\frac{(2\sigma+1)|\epsilon|}{\nu \omega}
\Big\{\langle \varphi_L, g(|\psi_E|^{2\sigma}-|a_L^{\lambda} \varphi_L|^{2\sigma}) \varphi_L \rangle \\
&& \hspace{6cm}+ \langle \varphi_L, g|\psi_E|^{2\sigma}Q(\mu,\phi_{\lambda, \epsilon})\varphi_L \rangle\Big\}.
\eee
%

If $\nu=0,$ then $\mu$ are the eigenvalues of $N$ and they are the solutions of the equation 
\bee
P(\mu)=\mu^2 + (\alpha +\beta ) \mu + \alpha \beta -1 = 0,
\eee
which always has only two real different solutions $\mu_1$, $\mu_2$ since $(\alpha +\beta )^2 - 4 \alpha \beta + 4 = (\alpha -\beta )^2 + 4 >0$. \ In particular, these two real eigenvalues are both negative or both positive if $\alpha \beta >1$, or only one is negative in $\alpha \beta < 1$. 

To investigate the sign of $\alpha \beta -1$, we consider, at first, the case of the symmetric stationary solution corresponding to 
$z^\lambda =z^s = 0 $ (see Theorem \ref {Theorem1} and Remark \ref {Remark7}). \ Then (hereafter, for the sake of simplicity, 
we denote by $\sim$ that we have an exponentially small term)
\bee
a_R^\lambda = a_L^\lambda = \frac {1}{\sqrt {2}}\, , \ \ E \sim -1-|\eta| \frac {1}{2^\sigma}
\eee
and
\bee
\alpha = \beta = E + |\eta | \frac {2\sigma +1}{2^\sigma} \sim -1 + |\eta | \frac {2\sigma }{2^\sigma}
\eee
Hence, condition $\alpha \beta >1$ is equivalent to the condition $|\eta | > \eta^\star = \frac {2^\sigma }{\sigma}$ (and in such a case both solutions are negative), and condition $\alpha \beta <1$ is equivalent to the condition $|\eta | < \eta^\star = \frac {2^\sigma }{\sigma}$; provided $\hbar$ is small enough.

We consider next the case of the asymmetrical stationary solution corresponding to $z^\lambda  \not= 0$. \ In such a case we set $a= \frac 12 |\eta |(p^\lambda )^{2\sigma} $ and $b= \frac 12 |\eta | (q^\lambda)^{2\sigma}$, then 
\bee
\alpha \sim E + 2 (2\sigma +1 ) a \, , \ \ \beta \sim E + 2 (2\sigma +1 ) b 
\eee
and
\bee  
E \sim - \sqrt {1-(z^\lambda)^2} - 2 [a(p^\lambda)^2 + b (q^\lambda)^2 ]
\eee
Hence, condition $\alpha \beta < 1$ is equivalent to the condition
\bee
\ell (z^\lambda ,\sigma) -1 < 0
\eee
where
\bee
\ell (z,\sigma ) &:=& \left [ \sqrt {1-z^2} + 2 \left [ (p^2 - 2 \sigma -1 ) a + q^2 b \right ] \right ] \times \\
&& \hspace{30mm}
\left [ \sqrt {1-z^2} + 2 \left [ ap^2  + b (q^2 - 2 \sigma -1 ) \right ] \right ] \\ 
&& \hspace{-2cm}\sim \frac {\left [ (-1+z+4z\sigma ) (1+z)^\sigma + (1-z)^{\sigma +1} \right ] }
{(1-z^2) \left [ (1-z)^\sigma - (1+z)^\sigma \right ]^2} 
\times \left [ (1+z+4z\sigma ) (1-z)^\sigma - (1+z)^{\sigma +1} \right ]
\eee
since
\bee
p\sim \sqrt {\frac {1+z}{2}} \ \mbox { and } \ q\sim \sqrt {\frac {1-z}{2}} \, . 
\eee
Then, a straightforward calculation gives that 
\bee
\ell (z,\sigma ) -1 &\sim & \frac{4 z \sigma (1+z)^{2\sigma }}{(1-z^2) \left [ (1+z)^\sigma - (1-z)^\sigma \right ]^2 } \\
&& \hspace{20mm}\times \Big[ -z-1 - (z-1)\frac {(1-z)^{2\sigma}}{(1+z)^{2\sigma}} + 2z(1+2\sigma )
\frac{(1-z)^{\sigma}}{(1+z)^{\sigma}} \Big]  \\ 
&=& \frac {4 z \sigma (1+z)^{2\sigma }}{(1-z^2) \left[(1+z)^\sigma - (1-z)^\sigma \right]^2}\\ 
&& \hspace{20mm}\times\frac{2}{1+y} \left[ -1-y^{2\sigma+1} + (1+2\sigma ) y^\sigma - (1+2\sigma ) y^{\sigma +1} \right ] 
\eee
where we have set $\displaystyle{y=\frac {1-z}{1+z} \in [0,1]}$. \ We then consider the sign of the following polynomial in the right hand side above. 
\bee
q(y):=-1 + y^{2\sigma+1} + (1+2\sigma ) y^\sigma - (1+2\sigma ) y^{\sigma +1}.  
\eee
It is in fact easy to conclude that $q(y) \le 0$ for any $y \in [0,1]$. \ Indeed,  
\bee
q(y) \le -1 + (1+2\sigma ) y^\sigma - (1+2\sigma ) y^{\sigma +1} \le 0.
\eee

Now, we wish to investigate the sign of eigenvalues for the case $\nu \ne 0.$  \ Recall that the effective nonlinearity parameter $\eta$ 
satisfies $|\eta| \le C$ for some constant $C>0$. \ Also there exist $\hbar_0 \in (0,\hbar^\star)$, and a compact interval $K_{\hbar_0}$ such that the two eigenvalues of the matrix $N$,  
\bee 
\mu_1=\frac{1}{2}\{-(\alpha+\beta) -\sqrt{(\alpha-\beta)^2 +4} \}, \quad 
\mu_2=\frac{1}{2}\{-(\alpha+\beta) +\sqrt{(\alpha-\beta)^2 +4} \}
\eee
belong to $K_{\hbar_0}$ for any $\hbar \in (0, \hbar_0)$. \ Then we see that $C_j=C_j(a_R^{\lambda}, a_L^{\lambda}, E, \mu, \hbar)$ 
are bounded, together with their first derivatives, on $D \times K_{\hbar_0}$ uniformly for any $\hbar \in (0,\hbar_0)$: 
indeed, there exists a constant $C>0$ such that  
\begin{eqnarray*}
&& \nu^{-1}|\langle \varphi_L, g|\psi_E|^{2\sigma}
(I+ Q(\mu, \phi_{\lambda,\epsilon}))\varphi_R \rangle |\\ 
&&\le \nu^{-1} \Big[\|g\|_{L^{\infty}}
 \|\varphi_R \varphi_L\|_{L^{\infty}} \|\psi_E\|_{L^{2\sigma}}^{2\sigma}
+\|g\|_{L^{\infty}}\|\phi_L\| \|\phi_R\|_{L^4}^2 \|\psi_E\|_{L^{8\sigma}}^{4\sigma}\Big], 
\end{eqnarray*}
and this right hand side is bounded because of (\ref{Eq14}), (\ref{Eq20}) and (\ref{Eq36}). \ 
It also follows that if $1 \le 2\sigma$, 
\bee
\nu^{-1}|\langle \varphi_R, g(|\psi_E|^{2\sigma}-|a_R^{\lambda} \varphi_R |^{2\sigma}) \varphi_R \rangle | 
\le \nu^{-1} C(\|\varphi_R \varphi_L\| + \| \psi_c\|) (1+\|\psi_E\|_{L^{2(2\sigma-1)}}^{2\sigma-1}), 
\eee
whose right hand side is bounded, noting (\ref{Eq20}), (\ref{Eq21}), (\ref{Eq36}) and (\ref{Eq34}). \ If $0<2\sigma <1$, 
\bee
&& |\langle \varphi_R, g(|\psi_E|^{2\sigma}-|a_R^{\lambda} \varphi_R|^{2\sigma}) \varphi_R \rangle | 
\le   C \int \varphi_R^2 |g| \, | a_L^\lambda \varphi_L + \psi_{c} |^{2\sigma} dx \\ 
&& \ \ \ \ \le  C \| g \|_{L^\infty} \int \varphi_R^2  \, | \psi_{c} |^{2\sigma} dx +  \| g \|_{L^\infty} | a_L^\lambda |^{2\sigma} \int \varphi_R^2   \varphi_L^{2\sigma} dx.
\eee 
The first integral is estimated as follows
\be
\int \varphi_R^2  \, | \psi_{c} |^{2\sigma} dx \le \| \varphi_R^2 \|_{L^p} \cdot \| | \psi_{c} |^{2\sigma} \|_{L^q} 
= \| \psi_{c}  \|^{2\sigma} \| \varphi_R \|_{L^{2/(1-\sigma)}}^{2} \label {Eq67}
\ee
by means of the H\"older inequality, where $q= \frac {1}{\sigma} > 2$ and $p= \frac {1}{1-\sigma}$. Inequalities (\ref{Eq34}) and (\ref{Eq14}) yield that this right hand side is exponentially small. \ Similarly, the estimate of the second integral follows
\bee
\int \varphi_R^2   \varphi_L^{2\sigma} dx \le C \hbar^{-\alpha}
\eee
for some $\alpha >0$. \ As for the derivatives of $C_j$, the analyticity in $\mu$ of $(H_0- \lambda- \omega\mu )^{-1}$ ensures their regularity, and the uniform boundness follows from (\ref{Eq65}).

We come back to the problem (\ref{Eq66}). \ This problem is mapped to the problem to find the roots of the following characteristic equation. 
\bee 
D(a_R, a_L, E, \mu, \nu)= \mathrm{det}(N+\mu I-C)=0. 
\eee
Concretely,
\bee
\mathrm{det}(N+\mu I-C) 
&=& (\alpha+\mu-\nu C_1)(\beta+\mu-\nu C_4)-(1-\nu C_2)(1-\nu C_3)\\
&=& \mu^2+\{(\alpha+\beta) -\nu (C_1+C_4)\}\mu +\alpha\beta -1\\
&& \hspace{7mm}-\nu (C_2+C_3+\alpha C_4+\beta C_1) +\nu^2 (C_1C_4-C_2C_3).
\eee
Putting $S(\mu, \nu)=-(C_1+C_4)\mu-(C_2+C_3+\alpha C_4+\beta C_1) +\nu (C_1C_4-C_2C_3)$, we have 
\bee
D(\mu, \nu)=P(\mu)-\nu S(\mu, \nu)=0.
\eee
We note that by the above arguments, $S(\mu, \nu)$ and $\partial_{\mu} P(\mu)$ is uniformly bounded on $D \times K_{\hbar_0}$ for any 
$\hbar \in (0,\hbar_0)$. \ It is also seen that $D(\mu,\nu)$ is a $C^1$ function in $(\mu,\nu)$, 
\bee
&& D(\mu_1, 0)=D(\mu_2,0)=0, \quad \\
&&\frac{\partial D(\mu_1, 0)}{\partial \mu}=2 \mu_1+\alpha+\beta \ne 0, \quad 
\frac{\partial D(\mu_2, 0)}{\partial \mu}=2 \mu_2 +\alpha+\beta \ne 0.
\eee
By applying Implicit Function Theorem, there exist $\varepsilon_0>0$ such that  there exist two real solutions $\mu_1(\nu)$ and $\mu_2(\nu)$ of $D(\mu,\nu)=0$ for $|\nu| <\varepsilon_0$ and that  
\be \label{Eq68}
\mu_1(\nu) &=& \mu_1 -\nu \frac{S(\mu_1, 0)}{\partial_\mu P(\mu_1)}+O(\nu^2), \\ 
\label{Eq69}  
\mu_2(\nu) &=& \mu_2 -\nu \frac{S(\mu_2, 0)}{\partial_\mu P(\mu_2)}+O(\nu^2). \\ 
\ee
Therefore, for any $\varepsilon>0$ there exists $\hbar_1 \in (0, \hbar_0)$ such that $|\mu_1(\nu)-\mu_1|< \varepsilon$, and that $|\mu_2(\nu)-\mu_2|<\varepsilon$ and $\mu_1(\nu), \mu_2(\nu) \in K_{\hbar_0}$ for any $\hbar \in (0,\hbar_1)$. \ We remark here that $L_+^{\lambda,\epsilon}$ has at least one negative eigenvalue since $\langle L_+^{\lambda,\epsilon} \phi_{\epsilon, \lambda}, \phi_{\epsilon, \lambda} \rangle <0$. \ As a consequence, for the symmetric solutions, $L_+^{\lambda,\epsilon}$ has two negative eigenvalues if $|\eta| >\eta^\star$ and has only one negative eigenvalue if $|\eta| <\eta^\star$. \ For the asymmetric solution, $L_+^{\lambda,\epsilon}$ has only one negative eigenvalue. \ The proof of Lemma \ref {Lemma5} has been completed. 
\end {proof}

\subsection {$L^{\lambda , \epsilon}_-$ is a non-negative operator}

Next our aim is proving that $L_-^{\lambda,\epsilon}$ has no negative eigenvalues. Since the symmetric solution $\psi_E^s$, i.e. $\phi_{ \epsilon,\lambda}^s$, is positive by means of a suitable choice of the phase, $L_-^{\lambda,\epsilon} [\phi_{ \epsilon, \lambda}^s]$ is non-negative. \ However, we do not know the sign of the asymmetric solutions and we repeat here the same argument as in Lemma \ref{Lemma5} for $L_-^{\lambda,\epsilon}$.  

\begin {lemma} \label {Lemma6}
Let $\phi_{\lambda , \epsilon }$ be the symmetric and asymmetrical stationary solution associated to the level $\lambda$. \ Then there exists $\hbar_2 \in (0,\hbar_1)$, where $\hbar_1$ has been defined in Lemma \ref {Lemma5}, such that for any $\hbar \in (0,\hbar_2)$, $L_-^{\lambda,\epsilon}$ has no negative eigenvalues, more precisely, $L_-^{\lambda,\epsilon}$ has a zero eigenvalue and one positive eigenvalue $\omega \mu = O(\hbar^2)$.
\end {lemma}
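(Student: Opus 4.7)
The plan is to mimic the Lyapunov--Schmidt reduction of Lemma \ref{Lemma5}, the only change being the replacement of the coefficient $(2\sigma+1)$ by $1$ in front of $g|\phi_{\lambda,\epsilon}|^{2\sigma}$. Decomposing $u = a_R\varphi_R + a_L\varphi_L + u_c$ with $u_c \in \Pi_c L^2$, I would solve the third component of $L_-^{\lambda,\epsilon}u = \omega\mu u$ for $u_c$ by exactly the same contraction argument used in Lemma \ref{Lemma5}; this yields $u_c = Q_-[\mu,\phi_{\lambda,\epsilon}](a_R\varphi_R + a_L\varphi_L)$ with the analogous $H^2$-bound. Inserting $u_c$ into the projections onto $\varphi_R, \varphi_L$ reduces the eigenvalue problem to a $2\times 2$ matrix equation $(\mu I + N_- - \nu C_-)(a_R,a_L)^{\mathrm T} = 0$, where $N_-$ has off-diagonal entries equal to $1$ and diagonal entries $\alpha_- = E + |\eta||a_R^\lambda|^{2\sigma}$, $\beta_- = E + |\eta||a_L^\lambda|^{2\sigma}$; the entries of $C_-$ and of $\partial_\mu C_-$ are uniformly bounded on $D\times K_{\hbar_0}$ by the same estimates used in Lemma \ref{Lemma5}.

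The core calculation is then the analysis of the roots of $P_-(\mu) = \mu^2 + (\alpha_-+\beta_-)\mu + \alpha_-\beta_- - 1$ at $\nu = 0$. For the symmetric stationary solution one has $|a_R^\lambda| = |a_L^\lambda| = 1/\sqrt 2$ and $E \sim -1 - |\eta|/2^\sigma$, so $\alpha_- = \beta_- \sim -1$, $P_-(\mu) = \mu(\mu - 2)$, and the roots are $0$ and $2$. For the asymmetrical branches (with $\eta < 0$ by Hypothesis \ref{Hyp4}), I would exploit the bifurcation identity $f_+(z,\eta)=0$ from Theorem \ref{Theorem1}, which with $p^2 = (1+z)/2$, $q^2 = (1-z)/2$ reads $\tfrac{|\eta|}{2}(p^{2\sigma}-q^{2\sigma}) = z/\sqrt{1-z^2}$; substituting this into $E \sim -\sqrt{1-z^2} - |\eta|[p^{2(\sigma+1)} + q^{2(\sigma+1)}]$ collapses $\alpha_-$ and $\beta_-$ to the closed forms
\bee
\alpha_- \sim -\sqrt{\tfrac{1-z}{1+z}}, \qquad \beta_- \sim -\sqrt{\tfrac{1+z}{1-z}},
\eee
so that $\alpha_-\beta_- = 1$ and $\alpha_- + \beta_- = -2/\sqrt{1-z^2} < 0$, and the roots of $P_-(\mu)$ are $0$ and $2/\sqrt{1-z^2} > 0$. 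Hence in all cases the leading-order reduction produces a zero eigenvalue and a strictly positive one.

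For the perturbation $\nu \neq 0$, the root $\mu = 0$ remains exact: the identity $L_-^{\lambda,\epsilon}\phi_{\lambda,\epsilon} = 0$ together with the fact that $(a_R^\lambda, a_L^\lambda) \neq (0,0)$ (by Theorem \ref{Theorem1}, since $\psi_c$ is exponentially small) produces a non-trivial solution of the reduced system at $\mu = 0$ for every $\nu$, so $D_-(\mu,\nu) := \det(\mu I + N_- - \nu C_-)$ vanishes identically at $\mu = 0$. The second root is handled by the Implicit Function Theorem exactly as in (\ref{Eq68})--(\ref{Eq69}), giving $\mu_2(\nu) = \mu_2(0) + O(\nu)$, which stays strictly positive for $\hbar \in (0,\hbar_2)$ with $\hbar_2$ small enough. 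Since $\omega = \tilde O(e^{-\rho/\hbar})$ the positive eigenvalue $\omega\mu_2(\nu)$ is exponentially small, hence a fortiori $O(\hbar^2)$, matching the statement.

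The main obstacle is the algebraic step in the asymmetric case: one has to exploit the bifurcation equation $f_+(z,\eta)=0$ to collapse $\alpha_-$ and $\beta_-$ to closed form and, in particular, to recognize $\alpha_-\beta_- = 1$; this is what rules out an unexpected negative eigenvalue along any asymmetrical branch, whether $\sigma \le \sigma_{threshold}$ (branch $\phi^{as}$) or $\sigma > \sigma_{threshold}$ (both $\phi^{as1}$ and $\phi^{as2}$ must be treated). Once this is in place the rest is sign tracking parallel to Lemma \ref{Lemma5}, the uniform estimates on $C_-$ and $\partial_\mu C_-$ following from (\ref{Eq14}), (\ref{Eq20}), (\ref{Eq34}) and (\ref{Eq36}); any remaining eigenvalue of $L_-^{\lambda,\epsilon}$ outside the regime $|\omega\mu|\le C\hbar^2$ clusters near $\lambda_\pm^j - \lambda$ for some $j\ge 2$ and is therefore bounded below by a quantity of order $\hbar$, so it is automatically positive.
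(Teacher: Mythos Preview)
Your proposal is correct and follows essentially the same route as the paper: the same Lyapunov--Schmidt reduction as in Lemma~\ref{Lemma5} with $(2\sigma+1)$ replaced by $1$, the same $2\times 2$ matrix $N'$ with diagonal entries $\alpha' = E + |\eta||a_R^\lambda|^{2\sigma}$ and $\beta' = E + |\eta||a_L^\lambda|^{2\sigma}$, and the same key computation $\alpha'\beta' \sim 1$, $\alpha'+\beta' \sim -2/\sqrt{1-z^2}$ in the asymmetric case (your closed forms $-\sqrt{(1-z)/(1+z)}$ and $-\sqrt{(1+z)/(1-z)}$ are exactly the paper's $\frac{z-1}{\sqrt{1-z^2}}$ and $-\frac{z+1}{\sqrt{1-z^2}}$ rewritten). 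Your argument for the exact persistence of the zero root and your closing remark about eigenvalues outside the window $|\omega\mu|\le C\hbar^2$ are slightly more explicit than the paper's presentation, but the substance is the same.
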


\begin {proof}
The eigenvalue problem $L_-^{\lambda,\epsilon} u = (\omega \mu ) u$ with $u \in H^2(\R^d)$, where $|\omega \mu |\le C \hbar^2$, takes the form
\bee
\left \{
\begin {array}{lcl}
\omega \mu a_R &=& a_R \Omega - a_L \omega - \lambda a_R - a_R \langle \varphi_R , g |\phi_{\lambda , \epsilon }|^{2\sigma} u \rangle \\ 
\omega \mu a_L &=& a_L \Omega - a_R \omega - \lambda a_L - a_L \langle \varphi_L , g |\phi_{\lambda , \epsilon }|^{2\sigma} u \rangle \\ 
\omega \mu u_c &=& (H_0 - \lambda ) u_c 
- \Pi_c g |\phi_{\lambda , \epsilon }|^{2\sigma} u 
\end {array}
\right. 
\eee
where we put $u=a_R\varphi_R +a_L \varphi_L +u_c, \quad u_c \in \Pi_c L^2$. \ We remind that $\mu =0$ is a solution of the eigenvalue problem since $L_-^{\lambda , \epsilon} \phi_{\lambda , \epsilon } =0$, we then apply again the same Lyapunov-Schmidt reduction as in Lemma \ref {Lemma5} in order to compute the other eigenvalues of $L_-^{\lambda , \epsilon}$ such that $|\mu \omega | \le C \hbar^2$. \ This eigenvalue problem can be rewritten, assuming $|\omega \mu | \le C \hbar^2$, as follows.  
\bee
(N'+\mu I - \nu C') 
\begin{pmatrix}
a_R \\
a_L
\end{pmatrix}
=
\begin{pmatrix}
0\\
0 
\end{pmatrix}
,
\eee
where 
\bee
&& N'=
\begin{pmatrix}
\alpha ', & 1 \\
1, & \beta ' 
\end{pmatrix}
, \quad C'=
\begin{pmatrix}
C'_1, & C'_2 \\
C'_3, & C'_4 
\end{pmatrix}
, \quad C_3'=\bar{C_2'},\\
&& \alpha '= E + |\eta | |a_R^\lambda |^{2\sigma}, \quad 
\beta '= E + |\eta | |a_L^\lambda |^{2\sigma}.   
\eee
Remind that $\nu$ is defined in Lemma \ref{Lemma5}. \ As in the proof of Lemma \ref{Lemma5}, it suffices to know the sign of $\alpha '\beta '-1$. \ We compute the case of the asymmetric solutions corresponding to $z^{\lambda} \ne 0$ (in the case of the symmetric solution corresponding to $z^\lambda =0$ we follow the same arguments). \ In this case, 
\bee
\alpha '&\sim & -\sqrt{1-(z^{\lambda})^2}-|\eta|\{(p^{\lambda})^{2\sigma+2} + (q^{\lambda})^{2\sigma+2}\} + |\eta| (p^{\lambda})^{2\sigma}, \\
\beta '&\sim & -\sqrt{1-(z^{\lambda})^2}-|\eta|\{(p^{\lambda})^{2\sigma+2} + (q^{\lambda})^{2\sigma+2}\} + |\eta| (q^{\lambda})^{2\sigma}, \\
&& |\eta|=\Big[\Big(\frac{1+z^{\lambda}}{2}\Big)^{\sigma} -\Big(\frac{1-z^{\lambda}}{2}\Big)^{\sigma}\Big]^{-1} 
\times \frac{2z^{\lambda}}{\sqrt{1-(z^{\lambda})^2}}.
\eee
By direct computations it is not difficult to obtain that  
\bee
\alpha '\sim \frac{z^{\lambda}-1}{\sqrt{1-(z^{\lambda})^2}}, \quad \beta '\sim  -\frac{z^{\lambda}+1}{\sqrt{1-(z^{\lambda})^2}}.
\eee
Therefore, $\alpha ' \beta ' \sim 1$ and 
\bee
\alpha '+\beta '\sim -\frac{2}{\sqrt{1-(z^{\lambda})^2}},
\eee
which implies $\mu_1 \mu_2 \sim 0$ and $\mu_1 +\mu_2 >0$ for the eigenvalues of $N'$. \ We may assume without generality that 
$|\mu_1|$ is very small and $\mu_2$ is positive. \ It follows from the same arguments as in Lemma \ref{Lemma5} that the perturbed matrix $N'-\nu C'$ has two different eigenvalues $\mu_1(\nu)$ and $\mu_2(\nu)$ verifying (\ref{Eq68}) and  (\ref{Eq69}). \ Since we know that $L_-^{\lambda,\epsilon}$ has always zero eigenvalue, and perturbed eigenvalues are continuous with respect to $\nu$, we conclude that  $\mu_1(\nu)=0$ and $\mu_2(\nu)>0$. 
\end {proof}

\subsection {Slope condition}

In order to check the slope condition, we consider the following quantity.  
\bee
F(\lambda ) =  \| \phi_{\lambda ,\epsilon } \|^2 = |\epsilon |^{1/\sigma}\,  
\eee
and we remark that
\bee
\frac {d F (\lambda)}{d \lambda } &=& \left [ \frac {d \lambda }{d\epsilon } \right ]^{-1} \frac {d}{d\epsilon} (-\epsilon)^{1/\sigma} = - \frac {1}{\omega \sigma } |\epsilon |^{(1-\sigma)/\sigma} \left [ \frac {d E }{d\epsilon } \right ]^{-1} \\ 
&=& - \frac {1}{C_R \sigma } |\epsilon |^{(1-\sigma)/\sigma} \left [ \frac {d E }{d\eta } \right ]^{-1}\, . 
\eee
Thus, we only have to check the sign of $\frac {d E }{d\eta }$ for the symmetric and asymmetrical stationary solutions.

\subsubsection {Estimate of the stationary solutions as function of the non-linearity parameter.}

The stationary solution 
\bee
\psi = a_R \varphi_R + a_L \varphi_L + \psi_c \, , 
\eee
of equation (\ref {Eq25}) associated to the energy level $E$ depends on the value of the nonlinearity parameter $\eta = \epsilon c /\omega$, where $c=C_R=C_L$ is defined in equation (\ref {Eq30}). 

In particular, in Theorem 1 we have proved that, locally, there is a correspondence one-to-one from $\eta$ to the solution $p$, $q$, $\alpha$, $\beta$ and $E$ (up to the gauge choice of the phase, where we set $\theta = \alpha - \beta$) of equation (\ref {Eq45}) and $\psi_c$ of equation (\ref {Eq37}); provided that $\eta \not= \pm \eta^+$ and $\eta \not= \pm \eta^\star$.

In order to see the sign of $dE/d\eta,$ we wish to obtain the estimate of the first derivative of $p$, $q$, $\alpha$, $\beta$, $E$ and $\psi_c$ as function of $\eta$. \ To this end, let 
\bee
D'' = \{ ( p,q, \alpha, \beta, E ) \in [0,1]^2 \times [0, 2 \pi )^2 \times \R \ :\ p^2+q^2 \le 1 ,\ |\omega E | \le C \hbar^2 \}
\eee
for some $C>0$ fixed; and let 
\bee
\begin {array}{ccccc}
\Phi & : & \R \times H^2 \times D'' & \to & H^2 \times \R^4 \\
&& (\eta , \psi_c , p, q, \alpha , \beta , E ) & \mapsto & \left ( F(\psi_c ) , G \right )
\end {array}
\eee
where $F(\psi_c)$ is defined by (\ref {Eq38Ter}) and where $G$ is defined by (\ref {Eq45}) 
with $\epsilon$ replaced by $\omega \eta /c$; $F(\psi_c)=F(\eta, \psi_c,p,q,\alpha,\beta,E)$, 
and $G=G(\eta, p,q,\alpha,\beta, E)=(G_1, G_2, G_3, G_4).$ \ For simplicity, we set $y=(\psi_c , p,q, \alpha, \beta, E ) \in H^2 \times D'' $. \ Since the mapping $\displaystyle{\frac{\partial \Phi}{\partial y} (\eta, \cdot)} : H^2 \times D'' \to H^2 \times \R^2$ is one-to-one at a point $\eta \ne \pm\eta^{\star}, \pm\eta^{+}$, 
we obtain the unique solution $y=y(\eta )$ of equation $\Phi (\eta , y ) =0$ (up to the gauge choice of the phase). Furthermore, $\Phi(\eta, y)$ is $C^1$, so the solution $y(\eta)$ is $C^1$ except for $\eta \ne \pm\eta^{\star}, \pm\eta^{+}$, and we have  
\be
\frac {\partial \Phi}{\partial \eta} + \frac {\partial \Phi}{\partial y} y' =0 \, . \label {Eq76Ter}
\ee
Here, $'$ denotes the derivative with respect to $\eta$, and we use this notation hereafter, too. \ 
We will in fact see that $\displaystyle{\frac{\partial \Phi}{\partial y} (\eta, \cdot)}$ is one-to-one 
for any $\eta\ne \pm\eta^{\star}, \pm\eta^{+} $ in the proof of Lemma \ref{Lemma8} below. \ Therefore we do not 
mention the details about this fact here. 

The first equation of (\ref {Eq76Ter}) takes the form 
\be
\omega E \psi_c' + \omega E' \psi_c = [H_0 -\Omega  ] \psi_c' + \frac {\omega}{c}\Pi_c v + \frac{\omega \eta}{c}  \Pi_c g W  \left ( a_R' \varphi_R + a_L' \varphi_L + \psi_c' \right ) \, , \label {Eq76Bis}
\ee
where $v=g|\psi|^{2\sigma}\psi,$ and 
\bee
W=[(\sigma+1)|\psi|^{2\sigma}+\sigma\psi^2|\psi|^{2(\sigma-1)} \mathcal{T}], 
\quad \mathcal{T} u := \bar{u} \, ; 
\eee
actually, the stationary solution is a real valued function by means of a gauge choice (see Remark 7). 

In order to write the other equations of (\ref {Eq76Ter}) we make use of the first two equations of (\ref {Eq28}) and of the normalization condition:
\be
\left \{
\begin {array}{l}
E  a_R = - a_L +  \frac {\eta}{c} \langle \varphi_R , v \rangle \\ 
E  a_L = - a_R +  \frac {\eta}{c} \langle \varphi_L , v \rangle \\ 
|a_R|^2 + |a_L|^2 + \langle \psi_c , \psi_c \rangle = 1
\end {array}
\right. \, . \label {Eq75}
\ee

Now, we get the estimate of the derivative of $\psi_c$ in Lemma \ref {Lemma7} and then the estimate of the derivative of $p,q,\alpha, \beta$ and $E$ in Lemma \ref {Lemma8}.

\begin {lemma} \label {Lemma7}
Let $(a_R , a_L, E) \in D$ and let $\eta $ satisfying Hyp. \ref {Hyp3}, let $\psi_c$ be the solution of equation (\ref {Eq37}). \ Then 
\be
\left \| \frac {\partial \psi_c}{\partial \eta} \right \|_{H^2} = \left [ 1 + \max \left ( \left | \frac {\partial a_R}{\partial \eta} \right | , \ \left | \frac {\partial a_L}{\partial \eta} \right | , \ \left | \frac {\partial E}{\partial \eta} \right | \right ) \right ] \tilde \asy \left ( e^{-\rho /\hbar} \right ) \, \mbox { as } \ \hbar \to 0 . \label {Eq73}
\ee
\end {lemma}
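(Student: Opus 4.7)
The plan is to differentiate the defining equation (\ref{Eq37}) for $\psi_c$ with respect to $\eta$ and then treat the resulting linear equation for $\psi_c'$ by the same resolvent-plus-contraction scheme that gave Lemma \ref{Lemma3}. Writing $\epsilon = \omega\eta/c$ and recalling that $\omega$ and $c$ depend only on $\hbar$, differentiation produces exactly equation (\ref{Eq76Bis}). Rearranging this as a linear equation in $\psi_c'$, I obtain
\begin{eqnarray*}
\bigl[I - A(\eta)\bigr]\psi_c' = B(\eta),
\end{eqnarray*}
where $A(\eta) = \tfrac{\omega\eta}{c}[H_0 - \Omega - \omega E]^{-1}\Pi_c\, gW$ and $B(\eta) = [H_0 - \Omega - \omega E]^{-1}\Pi_c\bigl[\omega E'\psi_c + \tfrac{\omega}{c}\,v + \tfrac{\omega\eta}{c}gW(a_R'\varphi_R + a_L'\varphi_L)\bigr]$.

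First I would show that $I - A(\eta)$ is boundedly invertible on $H^2$ uniformly on $D$. The resolvent bound (\ref{Eq39}) gives $\|[H_0 - \Omega - \omega E]^{-1}\Pi_c\|_{\mathcal{L}(L^2, H^2)} \le C_1\hbar^{-1}$, and the H\"older/Gagliardo--Nirenberg chain used in the proof of Lemma \ref{Lemma3} (this is where Hypothesis 2 on $\sigma$ enters) controls $\|gW u\|$ in terms of $\|\psi\|_{H^1}^{2\sigma}\|u\|_{H^2}$ with polynomial-in-$\hbar^{-1}$ loss. Combined with $\omega\eta/c = \epsilon = \tilde O (e^{-\rho/\hbar})$ from (\ref{Eq17}), this forces $\|A(\eta)\|_{\mathcal{L}(H^2)} = \tilde O(e^{-\rho/\hbar})$, so $I - A(\eta)$ is invertible with norm bounded by $2$ for $\hbar$ small enough and the estimate reduces to $\|\psi_c'\|_{H^2} \le 2\|B(\eta)\|_{H^2}$.

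Then I would estimate each of the three terms of $B(\eta)$ in $H^2$. Every one carries an exponentially small prefactor: the first has $\omega = \tilde O(e^{-\rho/\hbar})$ and, additionally, $\|\psi_c\|_{H^2} = \tilde O(e^{-\rho/\hbar})$ by Lemma \ref{Lemma3}, contributing $|E'|\,\tilde O(e^{-\rho/\hbar})$; the second has $\omega/c = \tilde O(e^{-\rho/\hbar})$ and $v$ uniformly bounded in $L^2$ on $D$, contributing $\tilde O(e^{-\rho/\hbar})$; the third has $\omega\eta/c = \epsilon = \tilde O(e^{-\rho/\hbar})$ and the multiplier $gW(a_R'\varphi_R + a_L'\varphi_L)$ controlled as above, contributing $(|a_R'| + |a_L'|)\tilde O(e^{-\rho/\hbar})$. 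After multiplying by the $O(\hbar^{-1})$ resolvent bound (absorbed into the tilde-$O$ notation), these combine to $\|B(\eta)\|_{H^2} = [1 + \max(|a_R'|,|a_L'|,|E'|)]\,\tilde O(e^{-\rho/\hbar})$, which is exactly (\ref{Eq73}).

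The main technical nuisance is handling the multiplier $W = (\sigma+1)|\psi|^{2\sigma} + \sigma\psi^2|\psi|^{2(\sigma-1)}\mathcal{T}$ when $0 < \sigma < 1$, where the factor $|\psi|^{2(\sigma-1)}$ is formally singular; this is bypassed by the same H\"older split already used for the bound on $C_j$ in the proof of Lemma \ref{Lemma5} (inequality (\ref{Eq67})), which uses only that $\psi = a_R\varphi_R + a_L\varphi_L + \psi_c$ with $\psi_c$ exponentially small and $\varphi_R, \varphi_L \in L^p$ for every $p$, together with the subcriticality range (\ref{Eq21}). Apart from this, the proof is simply the parameter-derivative version of the contraction argument of Lemma \ref{Lemma3}, with $\epsilon$ contributing an additional exponentially small factor in each source term.
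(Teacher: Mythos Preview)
Your proposal is correct and follows essentially the same route as the paper: differentiate (\ref{Eq37}) to obtain (\ref{Eq76Bis}), rewrite this as a linear equation for $\psi_c'$ of the form $[I\pm A(\eta)]\psi_c'=B(\eta)$, invoke the resolvent bound (\ref{Eq39}) together with the H\"older/Gagliardo--Nirenberg estimates of Lemmata \ref{Lemma3} and \ref{Lemma5} to show $\|A(\eta)\|_{\mathcal L(H^2)}=\tilde\asy(e^{-\rho/\hbar})$, and then bound the three source terms in $B(\eta)$. The paper's own proof is very terse---it simply writes down the inverted equation and refers back to ``the same ideas applied in the proof of Lemma \ref{Lemma5}''---so your version actually supplies more of the bookkeeping (in particular the explicit discussion of the $0<\sigma<1$ case via (\ref{Eq67})); up to harmless sign discrepancies in your $A(\eta)$ and $B(\eta)$, the two arguments coincide.
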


\begin {proof}
Since equation (\ref {Eq76Bis}) can be written as
\bee
\left [ \left ( H_0 - \Omega - \omega E \right ) \Pi_c  + \epsilon \Pi_c W \right ] \psi_c' = \omega E' \psi_c - \epsilon   \Pi_c g W  \left ( a_R' \varphi_R + a_L' \varphi_L  \right ) + \frac {\omega}{C_R} \Pi_c v \, . 
\eee
then
\bee
\psi_c' &=& \left [ I + \left ( H_0 - \Omega - \omega E \right )^{-1} + \epsilon \Pi_c W \right ] \, \left [  H_0 - \Omega - \omega E \right ]^{-1} \, \times \\ 
&& \ \ \times \left [ \omega {E}' \psi_c - \epsilon   \Pi_c g W  \left ( a_R' \varphi_R + a_L' \varphi_L  \right ) + \frac {\omega}{C_R} \Pi_c v  \right ]
\eee
and, by making use of the same ideas applied in the proof of Lemma \ref {Lemma5}, it turn out that the inverse operator is bounded and (\ref {Eq73}) follows.
\end {proof}

\begin {lemma} \label{Lemma8}
Let $|\eta |\not = \eta^\star $ and $|\eta |\not= \eta^+$. \ Then 
\be
\max \left [ \left | \frac {\partial p}{\partial \eta} \right | , \ \left | \frac {\partial q}{\partial \eta} \right | , \ \left | \frac {\partial \alpha}{\partial \eta} \right | , \ \left | \frac {\partial \beta}{\partial \eta} \right |, \ \left | \frac {\partial E}{\partial \eta} \right | \right ] \le C \label {Eq74}
\ee
for some $C>0$.
\end {lemma}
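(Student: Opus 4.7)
The plan is to apply the implicit function theorem to $\Phi(\eta, y) = 0$ with $y = (\psi_c, p, q, \alpha, \beta, E)$. From (\ref{Eq76Ter}) we have $y'(\eta) = -(\partial \Phi/\partial y)^{-1} \partial \Phi/\partial \eta$, so the task is to show that $\partial \Phi/\partial y$ is uniformly invertible in $\hbar$ at any $\eta$ with $|\eta|\notin\{\eta^\star, \eta^+\}$, and that $\partial \Phi/\partial \eta$ is uniformly bounded. The latter and the off-diagonal blocks coupling $\psi_c$ to the scalar unknowns are all $\tilde\asy (e^{-\rho/\hbar})$ by Lemma \ref{Lemma2} and Lemma \ref{Lemma3}, while the $\psi_c$-block itself is of the form $\mathrm{Id} + \tilde\asy (e^{-\rho/\hbar})$ on $H^2$, since it is exactly the linearization of the contraction $F(\psi_c)$ whose iteration produced Lemma \ref{Lemma3}. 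By a Schur-complement reduction, the question collapses to invertibility of the reduced Jacobian of the four scalar equations $G=0$ in the unknowns $(p,q,\alpha,\beta,E)$, up to an exponentially small perturbation.

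Using $G_2=0$ to fix the phase (so that $\theta=\alpha-\beta$ is $\asy (\nu)$ or $\pi+\asy (\nu)$) and the normalization $G_4=0$ to eliminate one of $p,q$ via $z=p^2-q^2$, the reduced system further collapses to the pair $f_\pm(z,\eta)+\asy (\nu)=0$ and $E=E_\pm(z,\eta)+\asy (\nu)$ from (\ref{Eq48})--(\ref{Eq49}). The second equation is explicit in $E$, so the only obstruction to invertibility is the vanishing of $\partial_z f_\pm$ at the solution curve. Along that curve, implicit differentiation yields $\partial_z f_\pm(z,\eta)\cdot z'(\eta) = -\partial_\eta f_\pm(z,\eta)$; equivalently, $\partial_z f_\pm$ vanishes exactly where $\eta'(z)=0$ with $\eta(z)$ defined in (\ref{Eq33}). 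The bifurcation analysis carried out in the proof of Theorem \ref{Theorem1} showed that $\eta'(z)=0$ occurs precisely at $z=0$ (giving $|\eta|=\eta^\star$) and, for $\sigma>\sigma_{threshold}$, also at $z=\pm z^+$ (giving $|\eta|=\eta^+$). Hence $\partial_z f_\pm$ is bounded away from zero for every $|\eta|\notin\{\eta^\star,\eta^+\}$, which provides a uniform bound on $|z'(\eta)|$, and therefore on $|p'|,|q'|$ via $p^2+q^2=1+\asy (\nu)$, on $|E'|$ via differentiation of (\ref{Eq49}), and on $|\alpha'|,|\beta'|$ via $G_2=0$ together with the gauge fixing.

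The main obstacle is the circularity already visible in Lemma \ref{Lemma7}: the estimate on $\|\partial\psi_c/\partial\eta\|_{H^2}$ depends on the very scalar derivatives we are trying to bound. This is handled by regarding the coupled linear system (\ref{Eq76Ter}) as a single perturbation of the block-diagonal operator formed by $\mathrm{Id}_{H^2}$ on the $\psi_c$-component and the reduced Jacobian on the scalar component; by the first paragraph, the off-diagonal blocks are $\tilde\asy (e^{-\rho/\hbar})$ while the reduced block is uniformly invertible, so a Neumann-series argument returns a uniform bound on the inverse of the full Jacobian once $\hbar$ is sufficiently small. A secondary technical point is that the $\asy (\nu)$ remainders in (\ref{Eq48}) must themselves have uniformly bounded $\eta$-derivatives; this is obtained by running the fixed-point iteration of the proof of Lemma \ref{Lemma3} once more with an additional $\eta$-differentiation, in the spirit of Remark \ref{Remark11}, which closes the argument and yields (\ref{Eq74}).
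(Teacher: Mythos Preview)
Your proof is correct and reaches the same conclusion as the paper, but the route is genuinely different. The paper differentiates the five real equations coming from (\ref{Eq75}) and the normalization directly, assembles the resulting $5\times 5$ matrix $M$ in the unknowns $(E',p',q',\alpha',\beta')$, and then computes $\det M$ explicitly, separately for the symmetric branch (where $\det M$ is proportional to $1+\eta\sigma 2^{-\sigma}$, vanishing only at $|\eta|=\eta^\star$) and for the asymmetrical branch (where $\det M$ is proportional to $Q(z)\,h(z)$ with $Q$ never vanishing for $z\neq 0$ and $h(z)=g(z)-g(-z)$ as in (\ref{Eq53}), so $\det M=0$ exactly when $\eta'(z)=0$, i.e.\ $|\eta|=\eta^+$). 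You instead peel the problem apart structurally: first a Schur complement knocks out the $\psi_c$-block (since it is $\mathrm{Id}+\tilde\asy(e^{-\rho/\hbar})$ on $H^2$ with exponentially small off-diagonal coupling), then the gauge choice together with $G_2=0$ and $G_4=0$ collapses the scalar system to $f_\pm(z,\eta)+\asy(\nu)=0$ and the explicit relation (\ref{Eq49}) for $E$, so that invertibility reduces to $\partial_z f_\pm\neq 0$, which you then tie back to the zeros of $\eta'(z)$ already located in Theorem~\ref{Theorem1}.

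What each approach buys: the paper's explicit determinant produces the closed formulas of Remark~\ref{Remark17}, which are then quoted verbatim in Lemma~\ref{Lemma9} to read off the sign of $dE/d\eta$ on each branch; your reduction is lighter and makes the link to the bifurcation picture transparent, but you would then obtain $E'$ by differentiating (\ref{Eq49}) along $z(\eta)$ rather than from a matrix inverse. One small point worth tightening: on the symmetric branch $z\equiv 0$ and $\eta(z)$ is not the relevant object; the obstruction there is $\partial_z f_\pm(0,\eta)=1\pm\eta\sigma 2^{-\sigma}$ directly, which vanishes precisely at $|\eta|=\eta^\star$. Your conclusion is right, but the phrase ``$\eta'(z)=0$ at $z=0$'' describes the limit along the asymmetrical branch rather than the symmetric one, so it is cleaner to state the symmetric case separately (as the paper does).
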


\begin {proof} Now, in order to give an estimate of the derivative of $p$, $q$, $\alpha$, $\beta$ and $E$ we write down the corresponding equations of (\ref {Eq76Ter}), that is we have to consider the derivate of equations (\ref {Eq75}). \ We assume, for the sake of definiteness, that the stationary solution corresponds to $\theta =0$ (that is $\psi$ is a symmetric or asymmetrical stationary solution). \ In fact, we rewrite $a_R=p e^{i\alpha}$ and $a_L = q e^{i\beta}$ by means of $p$, $q$, $\alpha$ and $\beta$ (where we set $\theta = \alpha - \beta$); so that equation (\ref {Eq75}) takes the form
\bee
\left \{
\begin {array}{lcl}
E p + q \cos \theta - \frac {\eta}{C_R} \Re \left [  \langle \varphi_R , v \rangle e^{-i \alpha} \right ] &=& 0 \\ 
 && \\
q \sin \theta + \frac {\eta}{C_R} \Im \left [  \langle \varphi_R , v \rangle e^{-i \alpha} \right ] &=& 0 \\
 && \\
E q + p \cos \theta - \frac {\eta}{C_R} \Re \left [ \langle \varphi_L , v \rangle e^{-i \beta} \right ] &=& 0 \\
 && \\ 
- p \sin \theta + \frac {\eta}{C_R} \Im \left [ \langle \varphi_L , v \rangle e^{-i \beta} \right ] &=& 0 \\ 
&& \\ 
p^2 + q^2 + \| \psi_c \|^2 &=& 1 
\end {array}
\right.
\eee
We take now the derivative of both sides with respect to $\eta$, obtaining that
\bee
\left \{
\begin {array}{l}
E' p + E p' +q' \cos \theta - q \theta' \sin \theta  - \frac {\eta}{C_R} \Re \left [  \langle \varphi_R , v' \rangle e^{-i \alpha} -i \alpha' \langle \varphi_R , v \rangle e^{-i \alpha} \right ] =  \frac {1}{C_R} \Re \left [  \langle \varphi_R , v \rangle e^{-i \alpha} \right ] \\ 
 \\
q' \sin \theta + q \theta' \cos \theta + \frac {\eta}{C_R} \Im \left [  \langle \varphi_R , v' \rangle e^{-i \alpha} -i \alpha'  \langle \varphi_R , v \rangle e^{-i \alpha} \right ] =  - \frac {1}{C_R} \Im \left [  \langle \varphi_R , v \rangle e^{-i \alpha} \right ] \\ 
 \\
E' q + E q' +p' \cos \theta - p \theta' \sin \theta  - \frac {\eta}{C_R} \Re \left [ \langle \varphi_L , v' \rangle e^{-i \beta} - i \beta' \langle \varphi_L , v \rangle e^{-i \beta}\right ] = \frac {1}{C_R} \Re \left [ \langle \varphi_L , v \rangle e^{-i \beta} \right ] \\
 \\
- p' \sin \theta - p \theta' \cos \theta   + \frac {\eta}{C_R} \Im \left [ \langle \varphi_L , v' \rangle e^{-i \beta} - i \beta' \langle \varphi_L , v \rangle e^{-i \beta} \right ] = - \frac {1}{C_R} \Im \left [ \langle \varphi_L , v \rangle e^{-i \beta} \right ] \\ 
 \\ 
2p p' + 2 q q' = - 2\Re \langle \psi_c , \psi_c' \rangle 
\end {array}
\right.
\eee
We remark that
\bee
\begin {array}{c}
\langle \psi_c, \psi_c' \rangle  = \asy (\nu^2 ) \\ 
\langle \varphi_R, v \rangle = \langle \varphi_R , g |\varphi_R |^{2\sigma} \varphi_R \rangle |a_R|^{2\sigma } a_R + \tilde \asy (\nu) = C_R p^{2\sigma+1} e^{i\alpha}  + \tilde \asy (\nu) \\ 
\langle \varphi_L, v \rangle = \langle \varphi_L , g |\varphi_L |^{2\sigma} \varphi_L \rangle |a_L|^{2\sigma } a_L + \tilde \asy (\nu) = C_L q^{2\sigma+1} e^{i\beta} + \tilde \asy (\nu) \\
\langle \varphi_R , v' \rangle = \langle \varphi_R , g W \varphi_R \rangle (p' + p i \alpha ') e^{i\alpha } + q' \tilde \asy (\nu ) + \beta' \tilde \asy (\nu ) +  \tilde \asy (\nu ) \\ 
\langle \varphi_L , v' \rangle = \langle \varphi_L , g W \varphi_L \rangle (q' + q i \beta ') e^{i\beta }  + p' \tilde \asy (\nu ) + \alpha' \tilde \asy (\nu ) + \tilde \asy (\nu ) 
\end {array}
\eee
where
\bee
\begin {array}{c}
\nu = e^{-\gamma \rho /\hbar} \\
\langle \varphi_R, g W \varphi_R \rangle = C_R \left [ (\sigma +1)  + \sigma e^{i 2 \alpha} \right ]  p^{2\sigma}+ \tilde \asy (\nu ) \\ 
\langle \varphi_L, g W \varphi_L \rangle = C_L \left [ (\sigma +1)  + \sigma e^{i 2 \beta} \right ] q^{2\sigma}+ \tilde \asy (\nu )
\end {array}
\eee
Therefore, the above system takes the form (where the asymptotics $\sim $ means that the remainder term is of order $\tilde \asy (\nu )$)
\bee
\left \{
\begin {array}{l}
E' p + E p' +q' \cos \theta - q \theta' \sin \theta  + \\ 
 \ \ - \frac {\eta}{C_R} \Re \left [  C_R [(\sigma +1) + \sigma e^{2i\alpha} ] p^{2\sigma} (p'+ip \alpha') -i \alpha' C_R p^{2\sigma +1} \right ]  \sim  p^{2\sigma+1} \\ 
 \\
q' \sin \theta + q \theta' \cos \theta + \\ 
\ \ + \frac {\eta}{C_R} \Im \left [  C_R [(\sigma +1) + \sigma e^{2i\alpha} ] p^{2\sigma} (p'+ip \alpha') -i \alpha' C_R p^{2\sigma +1} \right ] \sim  0 \\ 
 \\
E' q + E q' +p' \cos \theta - p \theta' \sin \theta  + \\ 
\ \ - \frac {\eta}{C_R} \Re \left [  C_R [(\sigma +1) + \sigma e^{2i\beta} ] q^{2\sigma} (q'+iq \beta') -i \beta' C_R q^{2\sigma +1} \right ]  \sim  q^{2\sigma+1}  \\
 \\
- p' \sin \theta - p \theta' \cos \theta   + \\
\ \ +  \frac {\eta}{C_R} \Im \left [  C_R [(\sigma +1) + \sigma e^{2i\beta} ] q^{2\sigma} (q'+iq \beta') -i \beta' C_R q^{2\sigma +1} \right ] \sim  0\\ 
 \\ 
2p p' + 2 q q'  \sim  0 
\end {array}
\right.
\eee
that is
\bee
M (1 + \tilde \asy (\nu )) 
\left (
 \begin {array} {l}
 E' \\ p' \\ q' \\ \alpha' \\ \beta' 
 \end {array} 
 \right ) 
 = 
\left (
\begin {array}{l}
p^{2\sigma +1}  \\ 
0 \\
q^{2\sigma +1}  \\  
0 \\ 
0 
\end {array} 
\right ) 
\eee
where
\bee
M = 
\left ( 
\begin {array}{ccc}
p & E  - \eta [(\sigma +1) + \sigma \cos (2 \alpha )] p^{2\sigma }  & \cos \theta    \\
0 & \eta \sigma p^{2\sigma} \sin (2\alpha ) & \sin \theta \\ 
q & \cos \theta & E  - \eta [(\sigma +1) + \sigma \cos (2 \beta  )] q^{2\sigma }  \\
0 & -\sin \theta & \eta \sigma q^{2\sigma} \sin (2\beta ) \\ 
0 & 2p & 2q 
\end {array}
\right. \\
\left. \ \ \ \   
\begin {array}{cc}
 -q \sin \theta + \eta \sigma p^{2\sigma+1} \sin (2\alpha ) & + q \sin \theta  \\
 q \cos \theta + \eta p^{2\sigma +1} \sigma [1+ \cos (2\alpha ) ] & - q \cos \theta  \\ 
 - p \sin \theta & +p \sin \theta + \eta \sigma q^{2\sigma+1} \sin (2\beta ) \\
 - p \cos \theta  & p \cos \theta + \eta q^{2\sigma +1} \sigma [1+ \cos (2\beta ) ] \\ 
 0 & 0
\end {array}
\right )
\eee
We consider now, separately, the symmetric and asymmetrical solutions.

{\it Symmetric solution.} In the case of the symmetric solution where $\theta =0$ we can choose the common phase $\alpha = \beta =0$, by means of a gauge choice. \ Since $p=q= \frac {1}{\sqrt {2}}$, then a straightforward calculation gives that the matrix $M$ takes the form
\bee
\mbox {det} (M) = -8 \sigma \eta 2^{-\sigma} \left ( 1 + \eta \sigma 2^{-\sigma } \right )
\eee
Hence, for $\eta <0$ then det $(M)\not= 0$ provided that $|\eta | \not= \eta^\star.$ \ Hence, we have that (\ref {Eq74}) holds true. 

{\it Asymmetrical solution.} In the case of the asymmetrical solution corresponding to $\eta <0$ then $\theta =0$, we can still choose the common phase $\alpha = \beta =0$ by means of a gauge choice, and $p= \sqrt {\frac {1+z}{2}}$ and $q= \sqrt {\frac {1-z}{2}}$ satisfy equation $f_+ (z,\eta)=0$. \ Then we can set
\bee
\eta = - \frac {2z}{\sqrt{1-z^2}} \left [ \left ( \frac {1+z}{2} \right )^\sigma - \left ( \frac {1-z}{2} \right )^\sigma \right ]^{-1}
\eee
and
\bee
E \sim - \sqrt {1-z^2} + \eta \left [ \left ( \frac {1+z}{2} \right )^{\sigma +1} + \left ( \frac {1-z}{2} \right )^{\sigma +1} \right ]
\eee
By means of a straightforward computation it turns out that
\bee
\mbox {det M} = 8 \sigma z 2^{\sigma+1} \frac {[-(4\sigma +2)z (1-z^2)^\sigma +(1+z)^{2\sigma +1} - (1-z)^{2\sigma +1}] [h(z)]}{(1-z^2) [(1+z)^\sigma - (1-z)^\sigma]^3}
\eee
where $h(z) =g(z)-g(-z) $ enters in the definition of $\eta' $ (see equation (51)). \ If we remark that the function 
\be
Q(z):=[-(4\sigma +2)z (1-z^2)^\sigma +(1+z)^{2\sigma +1} - (1-z)^{2\sigma +1}] \label {Eq76}
\ee
is such that $Q(0)=0$ and that
\bee
\frac {dQ}{dz}= (2\sigma +1) \left [ 4 z^2 \sigma (1-z^2)^{\sigma -1} + \left ( (1-z)^\sigma - (1+z)^\sigma \right )^2 \right ] >0 \, , \ \forall z \in [-1,+1], \ z \not= 0 ,
\eee
then we can conclude that $\mbox {det M}=0$ if, and only if, $z=0$ and $z$ is a zero of the function $h(z)$. \ Then, as in the case of symmetric solution then (\ref {Eq74}) holds true. \ The Lemma is so proved. \end {proof}

\begin {remark} \label {Remark17}
In fact, for symmetric solution a straightforward calculation gives that
\be
 \left (
 \begin {array} {l}
 E' \\ p' \\ q' \\ \alpha'  \\ \beta'
 \end {array} 
 \right ) 
 \sim
M^{-1} 
\left (
\begin {array}{l}
p^{2\sigma +1} \\
0 \\
q^{2\sigma +1} \\ 
0  \\  
0 
\end {array} 
\right ) 
=
\left (
\begin {array}{l}
2^{-\sigma} \\ 
0  \\  
0  \\ 
0 \\ 
0 
\end {array} 
\right )  \label {Eq77}
\ee
On the other hand, for asymmetrical solution corresponding to $z=z^{as}$ a straightforward calculation gives also that
\be
 \left (
 \begin {array} {l}
 E' \\ p' \\ q' \\ \alpha'  \\ \beta'
 \end {array} 
 \right ) 
 \sim
M^{-1} 
\left (
\begin {array}{l}
p^{2\sigma +1} \\
0 \\
q^{2\sigma +1} \\ 
0  \\  
0 
\end {array} 
\right ) 
=
\left (
\begin {array}{l}
\frac {Q(z)}{2^{\sigma +1} h(z)}  \\ 
- \frac {\sqrt {2} \left [ (1+z)^\sigma - (1-z)^\sigma \right ]^2 (1-z^2) \sqrt {1-z}}{2^{\sigma +3}h(z) }  \\  
\frac {\sqrt {2} \left [ (1+z)^\sigma - (1-z)^\sigma \right ]^2 (1-z^2) \sqrt {1+z}}{2^{\sigma +3}h(z) }  \\  
0 \\ 
0 
\end {array} 
\right )  \label {Eq78}
\ee
where the function $Q(z)$, defined in equation (\ref {Eq76}), is such that $Q(-z)=-Q(z)$, $Q(0)=0$ and $\frac {dQ}{dz} > 0 $ for any $z \in (0,1]$.
\end {remark}

Now, we are ready to go back to the slope condition and to state the following.

\begin {lemma} \label{Lemma9} 
There exists $\hbar_3 \in (0, \hbar_2)$ such that 
for any $\hbar \in (0, \hbar_3)$ the following statements are satisfied. \ Let
\bee
F_s (\lambda ) = \| \phi^s_{\lambda , \epsilon} \|^2 
\eee
where $\phi^s_{\lambda , \epsilon }$ is the symmetric stationary solutions. \ Then 
\bee
\frac{d}{d\lambda} F_s (\lambda ) <0. 
\eee
Moreover, 

\begin {itemize}

\item [(i)] Let $\sigma \le \sigma_{threshold}$ and let 
\bee
F_{as} (\lambda ) = \| \phi^{as}_{\lambda , \epsilon} \|^2
\eee
where $\psi^{as}_{\lambda , \epsilon }$ is the asymmetrical stationary solutions. \ Then 
\bee
\frac{d}{d\lambda} F_{as} (\lambda )<0.
\eee

\item [(ii)] Let $\sigma > \sigma_{threshold}$ and let 
\bee
F_{as1} (\lambda ) = \| \phi^{as1}_{\lambda , \epsilon} \|^2 \ \mbox { and } \ F_{as2} (\lambda ) 
= \| \phi^{as2}_{\lambda , \epsilon} \|^2
\eee
where $\psi^{as1}_{\lambda , \epsilon }$ and $\psi^{as2}_{\lambda , \epsilon }$ are the asymmetric stationary solutions. \ Then 
\bee
\frac{d}{d\lambda} F_{as1} (\lambda ) <0 \ \mbox { and } \ \frac{d}{d\lambda} F_{as2} (\lambda ) >0.
\eee
\end {itemize}

\end {lemma}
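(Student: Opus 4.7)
The plan is to reduce the slope condition to the sign of $dE/d\eta$ on each branch, and then read off this sign from the explicit formulas already derived in Remark \ref{Remark17}. From the identity $F(\lambda) = \|\phi_{\lambda,\epsilon}\|^2 = |\epsilon|^{1/\sigma}$, the chain rule (together with $\lambda = \Omega + \omega E$ and $\eta = \epsilon C_R/\omega$) gives
\begin{eqnarray*}
\frac{dF}{d\lambda} = -\frac{1}{C_R \sigma}|\epsilon|^{(1-\sigma)/\sigma}\left[\frac{dE}{d\eta}\right]^{-1},
\end{eqnarray*}
so, since the prefactor is positive, the sign of $dF/d\lambda$ is opposite to the sign of $dE/d\eta$.

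For the symmetric stationary solution, formula (\ref{Eq77}) gives $E' \sim 2^{-\sigma} > 0$, so $\frac{dF_s}{d\lambda} < 0$ uniformly in $\eta$ (and for $\hbar$ small enough the exponentially small correction does not change the sign). For the asymmetrical branches, formula (\ref{Eq78}) yields
\begin{eqnarray*}
E' \sim \frac{Q(z)}{2^{\sigma+1}h(z)},
\end{eqnarray*}
where $Q(z)$ is odd with $Q(0)=0$ and, as noted in Remark \ref{Remark17}, $Q'(z) > 0$ on $(-1,1)\setminus\{0\}$, so $Q(z)$ has the same sign as $z$; and $h(z)=g(z)-g(-z)$ is the numerator of $\eta'(z)$ studied in Lemma \ref{Lemma4}.

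The remaining work is therefore to determine the sign of $h(z)$ on each asymmetrical branch, which is immediate from the bifurcation diagrams. Since $\eta'(z) = 2^{\sigma+1}\,h(z)\,/\,\{(1-z^2)^{3/2}[(1+z)^\sigma-(1-z)^\sigma]^2\}$, the sign of $h$ on a branch is that of $\eta'$. On the supercritical pitchfork branch ($\sigma \le \sigma_{threshold}$), $\eta(z)$ is strictly increasing in $|z|$ past $\eta^\star$, so $\eta'(z)$ and $z$ have the same sign on the branch, which forces $E' > 0$ and hence $dF_{as}/d\lambda < 0$. For $\sigma > \sigma_{threshold}$, the inner branch (as2) goes from $(z,\eta)=(0,\pm\eta^\star)$ down to $(\pm z^+,\pm\eta^+)$ with $\eta'$ of opposite sign to $z$, yielding $E' < 0$ and $dF_{as2}/d\lambda > 0$; the outer branch (as1) from $(\pm z^+,\pm\eta^+)$ to $|z|\to 1$ has $\eta'$ of the same sign as $z$, yielding $E' > 0$ and $dF_{as1}/d\lambda < 0$.

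The only real obstacle is bookkeeping: one must verify that the exponentially small $\tilde{\asy}(\nu)$ remainders in (\ref{Eq77})--(\ref{Eq78}) do not flip signs, which requires $|h(z)|$ to be bounded away from zero on each branch under consideration, i.e.~away from the bifurcation points $|\eta|=\eta^\star$ and $|\eta|=\eta^+$; this is precisely the nondegeneracy encoded in Lemma \ref{Lemma8}, and it allows one to choose $\hbar_3 \in (0,\hbar_2)$ small enough that the dominant terms above control the sign uniformly on compact subsets of each branch.
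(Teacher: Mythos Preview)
Your proof is correct and follows essentially the same approach as the paper: both reduce the slope condition to the sign of $dE/d\eta$ via the chain-rule identity, invoke (\ref{Eq77}) for the symmetric branch, invoke (\ref{Eq78}) for the asymmetrical branches, and then determine the sign of $h(z^\lambda)$ on each branch from the bifurcation analysis of Section~\ref{Sec3}. The only cosmetic difference is that you route the sign of $h$ through the identity $\mathrm{sign}\,h(z)=\mathrm{sign}\,\eta'(z)$ and the monotonicity of $\eta(z)$ on each branch, whereas the paper simply quotes the sign of $h(z^\lambda)$ directly as a corollary of Lemma~\ref{Lemma4}; both arguments are equivalent.
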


\begin {proof} We consider, at first, the case of the symmetric stationary solution corresponding to $z^\lambda = z^s = 0$. \ In such a case from (\ref {Eq77}) it follows that $\frac {d E }{d\eta } = 2^{-\sigma} >0$ and thus $\frac {d F_s (\lambda)}{d \lambda } <0 $ proving so the first statement.

Now, we consider the case of asymmetrical stationary solution corresponding to $z^\lambda \not= 0$. \ In such a case from (\ref {Eq78}) it follows that
\bee
\frac {d E }{d\eta } = \frac {Q(z^\lambda)}{2^{\sigma +1} h(z^\lambda)}
\eee
is an even function and where $Q(z^\lambda) \cdot z^\lambda >0$. \ Hence, the sign of $\frac {d E }{d\eta }$ only depends on the sign of $h (z^\lambda)$. \ We have then showed all the statements in Lemma \ref {Lemma9}, recalling that (see the results in Section \ref {Sec3})

\begin {itemize}

\item [] If $\sigma \le \sigma_{threshold}$, then the asymmetrical stationary solution $\phi_{\lambda , \epsilon}^{as}$ corresponding to $z^\lambda >0$ satisfies condition $h (z^\lambda )>0$;

\item [] If $\sigma > \sigma_{threshold}$, then the asymmetrical stationary solution $\phi_{\lambda , \epsilon}^{as1}$ corresponding to $z^\lambda >0$ satisfies condition $h (z^\lambda )>0$;

\item [] If $\sigma > \sigma_{threshold}$, then the asymmetrical stationary solution $\phi_{\lambda , \epsilon}^{as2}$ corresponding to $z^\lambda >0$ satisfies condition $h (z^\lambda )<0$.

\end {itemize}
\end {proof}

\begin {remark} 
In the same way, the monotone decreasing behavior of
\bee
F_a (\lambda ) = \| \phi_{\lambda ,\epsilon }^a \|^2 \eee
associated to the antisymmetric stationary solution follows.
\end {remark}

Finally, collecting the results of Proposition \ref {Proposition1} and of Lemmata \ref {Lemma5}, \ref {Lemma6} and \ref {Lemma9} then Theorem \ref {Theorem3} follows.

\begin{remark}
In Theorem \ref{Theorem3}, 
in case of $\sigma>\sigma_{threshold}$ and $|\eta|=\eta^+$, we did not obtain any conclusion about the orbital stability. \ Recall that $\eta^+ \in (0,\infty)$ is defined by $\eta^+=|\eta(z^+)|$ with $z^+ \in (0,1)$ 
such that $\eta'(z^+)=0$ (see Theorem \ref{Theorem1}).  \ Let $\phi_{\lambda^+, \epsilon^+}$ be the corresponding asymmetric  stationary solution to $\lambda^+=\Omega+\omega E^+$ where 
\begin{equation*}
E^+ \sim -\sqrt{1-(z^+)^2}+\eta^+ 
\Big[\Big(\frac{1+z^+}{2}\Big)^{\sigma+1}+\Big(\frac{1-z^+}{2}\Big)^{\sigma+1}\Big], 
\end{equation*}
and $\epsilon^+$ is given by $\omega \eta^+ /c$. \ According to Remark \ref{Remark17}, 
we see {\it formally} 
\begin{equation}
\frac{dF(\lambda)}{d\lambda} \Big|_{\lambda=\lambda^+} 
=- \frac{1}{C_R\sigma}|\epsilon|^{(1-\sigma)/\sigma}
\Big(\frac{dE}{d\eta}\Big)^{-1} \Big|_{|\eta|=\eta^+}=0,  
\end{equation}
since $\eta'(z^+)=0.$ 
Thus, we are required to prove the stability/instability 
for the case $dF/d\lambda=0$. 
In fact, this case would be included in (2) 
of Proposition \ref{Proposition1}, 
and we would conclude that, when $\sigma>\sigma_{threshold}$, 
at the transition point $|\eta|=\eta^{+}$ 
from $\phi_{\lambda,\epsilon}^{as1}$ 
to $\phi_{\lambda,\epsilon}^{as2}$, we should have the instability.  
To show this fact exactly, it suffices to compute $\frac{d^2 F}{d\lambda^2}$ 
and prove that it is not zero at $\lambda=\lambda^+,$ following the argument 
in Maeda \cite{Maeda} (see also some related conditions in \cite{CoPe, Ohta}). 
At least ``formally'' this may be seen as follows: 
we note that the use of Budan-Fourier 
Theorem ensures $d\lambda(z)/dz$ $\sim$ negative for $\hbar$ small. 
By formal calculations, 
\begin{eqnarray*}
\frac{d^2 F}{d \lambda^2}&=&\Big(\frac{\omega}{C_R}\Big)^{1/\sigma} 
\Big\{\frac{1}{\sigma}\Big(\frac{1}{\sigma}-1\Big)|\eta|^{\frac{1}{\sigma}-2}
\Big(\frac{d\eta}{d\lambda}\Big)^2 
-\frac{1}{\sigma}|\eta|^{\frac{1}{\sigma}-1}\frac{d^2 \eta}{d\lambda^2}\Big\}, \\ 
\frac{d\eta}{d\lambda}&=&\eta'(z)/\frac{d\lambda}{dz}, \quad 
\frac{d^2\eta}{d\lambda^2}=\Big\{\eta''(z) \frac{d\lambda}{dz}-\eta'(z) \frac{d^2 \lambda}{dz^2}\Big\}/\Big(\frac{d\lambda}{dz}\Big)^3. 
\end{eqnarray*}
We have seen in Section 3 that $\eta''(z^+) \ne 0$, which implies 
$\frac{d^2 F}{d \lambda^2} |_{\lambda=\lambda^+} \ne 0.$ 
However a rigorous justification seems more complex 
and we do not pursue in this direction in the present paper.  
\end{remark}

\appendix 

\section {Stationary states for a non-linear toy model}\label {App}

Here, we introduce, as a toy model, the semiclassical Schr\"odinger equation with two attractive symmetric Dirac's $\delta$ which is partially investigated in \cite{KoSa}.  
\be \label{Eq79}
i \hbar \frac {\partial \psi }{\partial t} 
= H_0 \psi + \epsilon g |\psi |^{2\sigma} \psi, 
\quad  \| \psi (\cdot ,t) \| =1, \quad x\in \R,~ t\in \R, 
\ee
where 
\bee
H_0=- \hbar^2 \frac {d^2}{d x^2} + \beta \delta_{-a} + \beta \delta_{+a}
\eee
for some $a \in \R$ and $\beta<0$. \ Hereafter, for the sake of definiteness, we assume that $g \equiv 1$.

Even though this operator $H_0$ with Dirac measures do not satisfy the assumptions for the potential $V(x)$ in the Introduction, the two-level approximation used in the previous sections is directly applicable to this example. \ In this section, we will give some remarks for the properties of $H_0$, and  the general theory we have used in the previous sections, for example, Cauchy problem and the orbital stability. 
We remark that a symmetric-breaking phenomenon for the cubic nonlinear Schr\"odinger equation with double Dirac potential is discussed in \cite{JaWe} too, but not in the semiclassical regime. 

\subsection {Spectrum of the linear operator}

The spectral problem
\bee
\left [ - \hbar^2 \frac {d^2}{d x^2} 
+ \beta \delta_{-a} + \beta \delta_{+a} \right ] \psi = {\mathcal E} \psi 
\eee
for $\beta < 0$ is equivalent to the spectral problem
\bee
H_\alpha \psi = E \psi 
\eee
where we set $E={\mathcal E}/\hbar^2$ and where the linear operator
\bee
H_{\alpha} = - \frac {d^2}{d x^2},\quad 
\mbox{with} \quad \alpha = \beta/\hbar^2, \,
\eee
is self-adjoint on the domain 
\bee
D(H_{\alpha}) = \left 
\{\psi \in H^2 (\R \setminus \{\pm a\}) \cap H^1(\R) :~  
\frac{d\psi}{dx}(\pm a +0)-\frac{d\psi}{dx}(\pm a -0) = \alpha \psi (\pm a) \right \}.
\eee

Let us recall some basic properties of the spectrum of $H_{\alpha}$ (see, e.g., \cite{Albeverio,KoSa} for details.) 

The essential spectrum of $H_{\alpha}$ is purely absolutely continuous and coincides with the positive real axis:
\bee
\sigma_{\mbox {\rm \small ess}} (H_{\alpha})
=\sigma_{\mbox {\rm \small ac}} (H_{\alpha}) =[0,+\infty ) \, .
\eee

The discrete spectrum consists of two eigenvalues, at least, given by means of the Lambert's special 
function $W(x)$ such that $W(x) e^{W(x)} = x $.

If $\alpha  < 0$ the discrete spectrum is not empty, in particular, 

\begin {itemize}

\item [-] if $ a  \le - \frac {1}{\alpha}$, then the discrete spectrum of $H_{\alpha}$ consists of only one eigenvalue $E_1 (a ,\alpha)$ defined as 
\bee
E_1 (a,\alpha ) = - \frac {1}{4a^2} \left [ W \left ( - a \alpha e^{a \alpha } \right ) - a  \alpha \right ]^2 \, ;
\eee

\item [-] if $ a  > -\frac {1}{\alpha}$, then the discrete spectrum of $H_{\alpha}$ consists of two eigenvalues $E_1 (a , \alpha )$ and $E_2 (a, \alpha )$ where 
\bee
E_2 (a,\alpha ) = - \frac {1}{4a^2} \left [ W \left ( + a \alpha e^{a \alpha } \right ) - a  \alpha \right ]^2 \, . 
\eee

\end {itemize}

The two associated eigenvectors take the form:

\begin {itemize}

\item [i)] Let 
\bee
k_1 = \sqrt {E_1} = \frac {i}{2a} \left [ W \left ( - a \alpha e^{a \alpha } \right ) - a \alpha \right ] 
\eee
then
\bee
\varphi_1 ( x) = C_1 
\left \{ 
\begin {array}{ll} 
e^{-i k_1 x} & x < -a \\ 
\frac {2k_1 + i \alpha}{2k_1} \left ( e^{-i k_1 x} + e^{i k_1 x } \right ) & 
-a \le x \le +a \\ 
e^{+i k_1 x } & x > + a 
\end {array}
\right.
\eee
where $C_1$ is the normalization constant given by
\bee
C_1 = \frac {|k_1|}{\sqrt {(2|k_1|+\alpha ) \, ( 2 |k_1| a + a \alpha + 1 )}} \, . 
\eee

\item [ii)] Let
\bee
k_2 = \sqrt {E_2} = \frac {i}{2a} \left [ W \left ( + a \alpha e^{a \alpha } \right ) - a \alpha \right ] 
\eee
then
\bee
\varphi_2 ( x) = C_2 
\left \{ 
\begin {array}{ll} 
e^{-i k_2 x} & x < -a \\ 
\frac {2k_2 + i \alpha}{2k_2} \left ( e^{-i k_2 x} - e^{i k_2 x } \right ) & 
-a \le x \le +a \\ 
- e^{+i k_2 x } & x > + a 
\end {array}
\right.
\eee
where $C_2$ is the normalization constant given by
\bee
C_2 = \frac {|k_2|}{\sqrt {-(2|k_2|+\alpha ) \, ( 2 |k_2| a + a \alpha + 1 )}} \, . 
\eee

\end {itemize}

\begin {remark}
Recalling that the Lambert's special function $W(x)$ has the following asymptotic behavior 
\bee
W(x) \sim x-x^2 + \frac 32 x^3 + \asy (x^4)
\eee
then it follows that the splitting is exponentially small, namely
\bee
\left | {\mathcal E}_1 - {\mathcal E}_2 \right | \sim \hbar^2 \alpha^2 e^{a \alpha } = \frac {\beta^2}{\hbar^2} e^{a\beta/\hbar^2} = \frac {\beta^2}{\hbar^2} e^{- a|\beta | /\hbar^2}.
\eee
\end {remark}
 
\begin{remark} \label{Remark20} 
The resolvent formula for $H_{\alpha}$ is known: let $h \in C_0^{\infty}(\R)$, $k^2 \in \rho(H_{\alpha}),$ and $\Im k >0$. The resolvent is expressed as follows. 
\bee
\left ( [H_\alpha - k^2 ]^{-1} h \right ) (x) = \int_{\R} K_\alpha (x,y;k) h (y) d y,  
\eee
with the kernel $K_\alpha$ having the following form 
\bee
K_\alpha (x,y ; k ) = K_0 (x,y;k) + \sum_{j=1}^4 K_\alpha^j (x,y;k)
\eee
where
\bee
K_0 (x,y;k) &=& \frac {i}{2k} e^{ik|x-y|} \\
K_\alpha^1 (x,y;k) &=& \frac {\alpha (2k +i\alpha )}{2k\left ( (2k+i\alpha )^2 + \alpha^2 e^{i4ka} \right )} e^{ik |x+a| + i k |y+a|} \\ 
K_\alpha^2 (x,y;k) &=& \frac {- i \alpha^2 e^{2i ka}}{2k\left ( (2k+i\alpha )^2 + \alpha^2 e^{i4ka} \right )} e^{ik |x+a| + i k |y+a|} \\ 
K_\alpha^3 (x,y;k) &=&  K_\alpha^2 (-x,-y;k) \\
K_\alpha^4 (x,y;k) &=&  K_\alpha^1 (-x,-y;k) 
\eee
\end {remark}
\vspace{3mm}

We consider here \emph {the case $a>-1/\alpha$ with $\alpha<0$}.  \ In such a case we have that the linear problem has two negative non degenerate eigenvalues:
\be
E_1 < E_2 < 0 \, . \label {Eq80}
\ee

\subsection {Nonlinear problem}

The local existence of solution in $H^1(\R)$, and conservation laws of energy and $L^2$ norm are verified in a similar way to \cite{FuOhOz}; the authors in \cite{FuOhOz} applied Theorem 3.7.1 of \cite{C} to the case of $a=0$. \ In our case, we take $-H_{\alpha}+E_1$ for the operator $A$ of Theorem 3.7.1 of \cite{C}. \ Then this operator A is a self adjoint operator on $X=L^2(\R)$ with the domain $D(A)=D(H_{\alpha})$, and also $A \le 0$. \ We take $X_A=H^1(\R)$ whose norm is equivalent to $H^1(\R)$ norm 
\bee
\|v\|_{X_A}^2=\|(d/dx)v\|^2+(1-E_1)\|v\|^2+\alpha (|v(a)|^2+|v(-a)|^2).
\eee
Condition (3.7.2) of Theorem 3.7.1 of \cite{C} is satisfied with $p=2$, and other conditions hold since we are in one dimensional case.  
\vspace{3mm}

For the existence of bifurcation of stationary solutions, it suffices to repeat the similar arguments in Section 3 (Theorem \ref{Theorem1}), but in $H^1(\R)$ instead of $H^2(\R)$.   
\vspace{3mm}

We can check the assumptions 
for the orbital stability/instability of stationary states in $H^1(\R)$, as in Section \ref{Sec5}, using the two level approximation. \ However, due to the singularity of Dirac potentials, we cannot consider  
the linearized problem with a more smooth domain than $H^1(\R)$, as, for ex., was considered in \cite{DiGa}. \ Remark also that $H^2$ regularity allows us simply to have the nonlinear instability assuming the existence of an unstable eigenvalue (e.g. \cite{CoCoOhIHP}). \ We thus give some explanations here.  

We consider as follows the linearized problem around the real valued rescaled stationary state $\phi_{\epsilon, \lambda}$ ($\epsilon$ and $\lambda$ are fixed here to discuss the general theory, so we denote it simply by $\phi$ from now on). 
\be \label{Eq81}
\frac{dv}{dt}=Av +F(v), \quad v=(v_1, v_2) \in D(A) \quad  
\mbox{with} \quad v_1=\Re v\, , \quad v_2=\Im v \, , 
\ee
where $A(v_1, v_2)=(L_-^{\lambda,\epsilon} v_2, -L_+^{\lambda,\epsilon} v_1)$. \ $A$ is a linear operator in $\mathbb{L}^2(\R)$ with domain 
\bee
D(A)=\Big\{v \in \mathbb{H}^2(\R \setminus \{\pm a\}) \cap \mathbb{H}^1(\R)~:
~ \frac{d v_j}{dx}(\pm a+0)- \frac{d v_j}{dx}(\pm a-0)=\alpha v_j(\pm a),~j=1,2\Big\}, 
\eee
where $\mathbb{H}^m (\R)=H^m (\R) \times H^m(\R)$ for $m\in \mathbb{Z}$. \ The nonlinear term is given by
\bee
F(v)=i\{|\phi+v|^{2\sigma}(\phi+v)-|\phi|^{2\sigma+1}-(\sigma+1)|\phi|^{2\sigma}v
-\sigma |\phi|^{2\sigma} \bar{v}\}.
\eee 

This operator $A$ generates its $C_0$-semigroup on $\mathbb{L}^2$ denoted by $e^{tA}$. \ Concerning the spectrum of $A$, we have the following Lemma. \ We note that we complexify the space when we consider the spectrum problem of $A$.

\begin{lemma} 
$\sigma_{\mathrm{ess}}(A) \subset i\R$. 
\end{lemma}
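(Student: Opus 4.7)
The plan is to split $A$ as $A = A_0 + V$, where $A_0$ captures the linear skew structure (with $L_\pm^{\lambda,\epsilon}$ replaced by $H_\alpha - \lambda$) and $V$ is the bounded multiplication operator encoding the nonlinear coefficient. Explicitly, I would set
\[
A_0 \begin{pmatrix} v_1 \\ v_2 \end{pmatrix} = \begin{pmatrix} (H_\alpha - \lambda) v_2 \\ -(H_\alpha - \lambda) v_1 \end{pmatrix},
\qquad
V \begin{pmatrix} v_1 \\ v_2 \end{pmatrix} = \begin{pmatrix} -|\phi|^{2\sigma} v_2 \\ (2\sigma+1)|\phi|^{2\sigma} v_1 \end{pmatrix},
\]
on $D(A_0) = D(A)$. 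In one dimension, $\phi \in H^1(\R) \hookrightarrow L^\infty(\R)$, so $V$ is a bounded operator on $\mathbb{L}^2(\R)$, and moreover $|\phi|^{2\sigma}$ is continuous and tends to zero at infinity.

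Next I would compute $\sigma(A_0)$ directly. Setting $B = H_\alpha - \lambda$ (self-adjoint with $\sigma(B) \subset \R$), the resolvent system $(A_0 - \zeta) v = f$ eliminates to $(B^2 + \zeta^2) v_j = (\text{explicit r.h.s.\ in } f_1, f_2, B f_1, B f_2)$ for $j=1,2$. By the spectral theorem, $B^2 + \zeta^2$ is boundedly invertible on $L^2$ exactly when $\zeta \notin \pm i\, \sigma(B)$, so $\sigma(A_0) = \pm i\, \sigma(B) \subset i\R$, and in particular
\[
\sigma_{\mathrm{ess}}(A_0) = \pm i\, \sigma_{\mathrm{ess}}(H_\alpha - \lambda) = \pm i [-\lambda, +\infty) \subset i\R .
\]

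The core step is then to show that $V$ is a relatively compact perturbation of $A_0$, i.e.\ that $V (A_0 - \zeta)^{-1}$ is compact on $\mathbb{L}^2$ for some $\zeta \in \rho(A_0)$. Given the block form above, this reduces to proving that multiplication by $|\phi|^{2\sigma}$ is $H_\alpha$-compact on $L^2(\R)$. Using the explicit resolvent kernel recalled in Remark \ref{Remark20}, one checks that $(H_\alpha + M)^{-1}: L^2(\R) \to H^1(\R)$ is bounded for $M$ large enough (despite the $\delta$-singularities at $\pm a$). Then, for any $R>0$, writing $|\phi|^{2\sigma} = \chi_R\, |\phi|^{2\sigma} + (1-\chi_R)\, |\phi|^{2\sigma}$ with $\chi_R$ the indicator of $[-R,R]$, the first piece gives a compact operator via the local Rellich embedding $H^1([-R,R]) \hookrightarrow L^2([-R,R])$, while the $L^\infty$ norm of the second piece tends to zero as $R\to\infty$ since $\phi$ decays at infinity. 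Hence $|\phi|^{2\sigma}(H_\alpha + M)^{-1}$ is a norm limit of compact operators and therefore compact.

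Finally, I would invoke the Fredholm-theoretic version of Weyl's theorem for closed operators: a relatively compact perturbation preserves the essential spectrum, so $\sigma_{\mathrm{ess}}(A) = \sigma_{\mathrm{ess}}(A_0) \subset i\R$, which is the claim. The main obstacle is the relative-compactness step: because the domain of $H_\alpha$ is only piecewise $H^2$ with jump conditions at $\pm a$, standard elliptic-regularity shortcuts do not apply directly and one has to rely on the explicit form of the resolvent kernel to obtain the required $L^2 \to H^1$ mapping property before the Rellich argument can be used. Once this point is settled, the conclusion is routine.
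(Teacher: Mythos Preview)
Your approach is essentially the same as the paper's: both split off the bounded potential term, identify the essential spectrum of the unperturbed part on $i\R$, verify relative compactness, and conclude by Weyl's theorem. The only differences are cosmetic---the paper works with the scalar operator $iA = (H_\alpha - \lambda) + C$ (where $C$ contains the conjugation $\mathcal{T}$) rather than your $2\times2$ block, and it establishes compactness of $C[H_\alpha + 1]^{-1}$ by checking directly from the explicit resolvent kernel in Remark~\ref{Remark20} that the integral operator is Hilbert--Schmidt, whereas you use the $L^2\to H^1$ mapping property of the resolvent together with a cutoff/Rellich argument and the decay of $|\phi|^{2\sigma}$ at infinity. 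Both routes are valid and lead to the same conclusion.
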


\begin{proof}
The operator $A$ can be rewritten in the following form (still denoted by $A$ with abuse of notation)
\bee
A v = - i \left \{ H_\alpha - \lambda - (\sigma +1)  |\phi |^{2\sigma } 
- \sigma  |\phi |^{2\sigma} {\mathcal T} \right \} v 
\eee
where ${\mathcal T} v = \bar v$ is a non-symmetric bounded linear operator. \ We consider the operator $iA$ as the operator $A_0$ perturbed by the operator $C$, i.e. 
\begin{equation*}
iA=A_0+C, 
\end{equation*}
where $A_0 = H_\alpha - \lambda,$ and $C= - (\sigma +1)  |\phi |^{2\sigma } - \sigma  |\phi |^{2\sigma} {\mathcal T}$. \ It suffices to prove that $\sigma_{ess} (iA) \subset \R$. \ To this end, we remark the following facts.

\begin {itemize}

\item [-] Since $\phi\in H^1(\R) \subset L^{\infty}(\R)$, $C$ is a bounded operator.

\item [-] It is known that $\sigma_{ess}(A_0) = [-\lambda , + \infty ) \subset \R$.

\item [-] $C \left [A_0 +\lambda + 1  \right ]^{-1}$ is a compact operator; indeed, $\left [A_0 +\lambda + 1  \right ]^{-1}$ is an integral operator with kernel given by $K^0_\alpha (x,y;i) + \sum_{j=1}^4 K^j_\alpha (x,y;i)$. \ One can see, for e.g., that $K^j_\alpha (x,y;i)$ ($j=1,2,3,4$) and $ |\phi |^{2\sigma} K^0_\alpha (x,y;i)$ are bounded on $L^2 (\R^2 , dx \, dy)$. \ This implies that $C \left [A_0 +\lambda + 1  \right ]^{-1}$ is Hilbert-Schmidt. 

\end {itemize}

Then $\sigma_{ess} (A_0) = \sigma_{ess} (iA)$ by means of the Weyl criterion. 
\end{proof}

As for eigenvalues of $A$, there are finitely many eigenvalues at the exterior of the essential spectrum for $\hbar$ small. Indeed, $\lambda<0$ for $\hbar$ small. \ Our aim is now to conclude the following Proposition. 

\begin {proposition} \label{Proposition2}
Assume that $A$ has an eigenvalue $\lambda_m$ with $\Re \lambda_m >0$, and that for any $\varepsilon>0$, there exists $M>0$ such that 
\begin{equation} \label{Eq82}
\|e^{tA} v\|_{\mathbb{L}^2} 
\le M e^{(1+\varepsilon) (\Re\lambda_m) t} \|v\|_{\mathbb{L}^2},
\end{equation}
for any $v \in \mathbb{L}^2(\R)$ and for any $t \ge 0$. \ Then, there exists $\varepsilon_0>0$, such that for any $\delta>0$ there exist a time $T$ and an initial data $u_0 \in D(H_{\alpha})$ satisfying $\|u_0 -\phi \|_{H^1} <\delta,$ and $\inf_{\theta\in\R} \|u(T)-e^{i\theta} \phi \| \ge \varepsilon_0.$ 
\end {proposition}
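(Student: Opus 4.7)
The plan is to carry out a classical Shatah--Strauss/Henry-style nonlinear instability argument that promotes the linear spectral instability of $\phi$ into nonlinear orbital instability. Let $u_m\in D(A)$ be an eigenvector of $A$ associated with $\lambda_m$, and let $w$ be a unit vector (in $\mathbb{H}^1$) in the one- or two-dimensional real $A$-invariant subspace attached to $\lambda_m$ (i.e.\ $\R\,u_m$ if $\lambda_m\in\R$, or $\mathrm{span}_\R\{\Re u_m,\Im u_m\}$ otherwise). The explicit action of $e^{tA}$ on this invariant subspace gives a lower bound $\|e^{tA}w\|_{\mathbb{L}^2}\ge c_0 e^{(\Re\lambda_m)t}$ for some $c_0>0$ and all $t\ge 0$. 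For $\delta\in(0,1)$, set $u_0:=\phi+\delta w$; this lies in $D(H_\alpha)\subset H^1$ with $\|u_0-\phi\|_{H^1}=\delta$. Writing the global-in-time $H^1$-solution of \eqref{Eq79} as $u(t)=\phi+v(t)$, the perturbation $v$ satisfies \eqref{Eq81} with $v(0)=\delta w$, and Duhamel yields
\begin{equation*}
v(t)=\delta\,e^{tA}w+\int_0^t e^{(t-s)A}F(v(s))\,ds.
\end{equation*}

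The key nonlinear estimate, obtained by Taylor-expanding $|\phi+v|^{2\sigma}(\phi+v)$ at $\phi$ and invoking the 1D embedding $H^1\hookrightarrow L^\infty$, reads
\begin{equation*}
\|F(v)\|_{\mathbb{L}^2}\le C\,\|v\|_{\mathbb{H}^1}^{\beta}\,\|v\|_{\mathbb{L}^2},\qquad \beta:=\min(1,2\sigma)>0,
\end{equation*}
valid whenever $\|v\|_{\mathbb{H}^1}\lesssim 1$. Fix a small $\varepsilon_0>0$ and define the exit time $T_\delta:=\inf\{t\ge 0:\|v(t)\|_{\mathbb{L}^2}\ge\varepsilon_0\}$. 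On $[0,T_\delta]$ a continuity/bootstrap argument based on the hypothesis $\|e^{tA}\|_{\mathbb{L}^2\to\mathbb{L}^2}\le Me^{(1+\varepsilon)(\Re\lambda_m)t}$ (with $\varepsilon$ chosen small compared to $\beta$) together with an analogous $\mathbb{H}^1\to\mathbb{H}^1$ growth estimate for $e^{tA}$ (available since $A$ differs from $-iH_\alpha$ by a multiplication operator that is bounded on $H^1$) closes the simultaneous upper bounds
\begin{equation*}
\|v(t)\|_{\mathbb{L}^2}\le 2M\delta\,e^{(1+\varepsilon)(\Re\lambda_m)t},\qquad \|v(t)\|_{\mathbb{H}^1}\le 2M'\delta\,e^{(1+\varepsilon')(\Re\lambda_m)t},
\end{equation*}
and in parallel delivers the opposite linear lower bound $\|v(t)\|_{\mathbb{L}^2}\ge\tfrac{c_0}{2}\delta\,e^{(\Re\lambda_m)t}$ on $[0,T_\delta]$. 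Consequently $T_\delta\le (\Re\lambda_m)^{-1}\log(2\varepsilon_0/(c_0\delta))<+\infty$ and $\|v(T_\delta)\|_{\mathbb{L}^2}=\varepsilon_0$; we set $T:=T_\delta$.

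It remains to convert the $\mathbb{L}^2$-bound $\|v(T)\|_{\mathbb{L}^2}=\varepsilon_0$ into the claimed lower bound on $\inf_\theta\|u(T)-e^{i\theta}\phi\|$. Because the $U(1)$-invariance of \eqref{Eq79} forces $i\phi\in\ker A$ and because $\Re\lambda_m\neq 0$, the vector $w$ may be chosen transverse to $\mathrm{span}_\R\{\phi,i\phi\}$; decomposing $u(T)-e^{i\theta^\ast}\phi=(1-e^{i\theta^\ast})\phi+v(T)$ for the minimising $\theta^\ast$ and projecting $L^2$-orthogonally to $\mathrm{span}_\R\{\phi,i\phi\}$ retains at least a fixed fraction of $\|v(T)\|_{\mathbb{L}^2}$, which yields the desired $\inf_\theta\|u(T)-e^{i\theta}\phi\|\ge \varepsilon_0/2$ after relabelling $\varepsilon_0$. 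The main obstacle in this scheme is the bootstrap of the middle paragraph: the hypothesis supplies only an $\mathbb{L}^2$ growth estimate, whereas the super-linear nonlinear estimate crucially requires $\mathbb{H}^1$-smallness; one must therefore establish a companion $\mathbb{H}^1$-growth estimate for $e^{tA}$ (by commuting with $\partial_x$ on each of the three intervals separated by $\pm a$ and handling the jump conditions inherited from $D(H_\alpha)$) and calibrate the parameter $\varepsilon$ against $\beta=\min(1,2\sigma)$ so that the nonlinear Duhamel correction stays strictly subleading until the exit time.
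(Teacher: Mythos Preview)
Your overall framework—a Shatah--Strauss/Duhamel bootstrap that promotes the unstable eigenvalue to nonlinear orbital instability—is the same as the paper's. The genuine gap is exactly the point you yourself flag as ``the main obstacle'': the $\mathbb{H}^1$ control of $v(t)$.

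Your bootstrap needs an $\mathbb{H}^1\to\mathbb{H}^1$ semigroup bound with exponential rate close to $\Re\lambda_m$, not merely \emph{some} exponential rate. Your first justification (``$A$ differs from $-iH_\alpha$ by a multiplication operator bounded on $H^1$'') gives, via Gronwall, only $\|e^{tA}\|_{\mathbb{H}^1\to\mathbb{H}^1}\le Ce^{\gamma t}$ with $\gamma$ of the size of that multiplication operator, in general much larger than $\Re\lambda_m$. With such a bound the nonlinear Duhamel correction at the exit time $T_\delta\sim(\Re\lambda_m)^{-1}\log(1/\delta)$ behaves like $\delta^{\beta(1-\gamma/\Re\lambda_m)}$, which blows up as $\delta\to 0$ whenever $\gamma>\Re\lambda_m$; the loop does not close. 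Your second justification (commute with $\partial_x$ on the three intervals and carry the jump conditions) does not preserve $D(H_\alpha)$ and is not developed; the paper explicitly remarks that the Dirac singularities prevent one from relying on spatial regularity here.

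The paper's remedy is to replace the spatial $\mathbb{H}^1$ control by control of $\partial_t v_\delta$ in $\mathbb{L}^2$, following Colin--Colin--Ohta. One writes a second Duhamel formula for $\partial_t v_\delta$; since $\partial_t v_\delta$ satisfies the same linear evolution (with source $\partial_t F(v_\delta)$), the \emph{hypothesised} $\mathbb{L}^2$ bound \eqref{Eq82} applies to it directly, with the sharp rate. The needed $\|v_\delta\|_{H^1}$ is then recovered from the equation itself (via $\|v\|_{H^1}^2\sim\langle(H_\alpha+c)v,v\rangle$ and $H_\alpha v$ expressed through $\partial_t v$ and lower-order terms), and the bootstrap closes on the combined quantity $\|v_\delta(t)\|_{H^1}+\|\partial_t v_\delta(t)\|_{L^2}\le 2C_0\delta e^{(\Re\lambda_m)t}$. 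This time-derivative device is precisely the substitute for the $\mathbb{H}^1$ semigroup estimate that your argument is missing.
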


Proposition \ref{Proposition2} means that the linearized instability implies the nonlinear instability. 

We may prove Proposition \ref{Proposition2} as in the proof of Theorem 6.1 of Part II of \cite{GSS} or in \cite{G}. \ Note that we have the Dirac measures in the equation and we do not expect that the solution is smooth as we have mentioned before, thus we make use rather of the time derivative, mimicking the proof of \cite{CoCoOh}, than of the way of \cite{GSS}. \ Here, for the sake of completeness, we give an outline of proof. 

\begin{proof}(Sketch of proof) 
Let $z_m$ be the associated eigenfunction to $\lambda_m$. \ Let $u_\delta(t)$ be the solution of (\ref{Eq79}) with initial data $u_\delta(0)=\phi+\delta z_m$. \ Since $\phi, z_m \in D(H_{\alpha}),$ $u_{\delta}(\cdot) \in C([0,T], D(H_{\alpha})) \cap C^1([0,T], L^2)$ for some $T>0$ (see Theorem 3.1 of \cite{AdNo}). \ Remark that $u_{\delta}(t)=e^{-i\lambda t}(\phi+v_{\delta}(t))$ with $v_{\delta}(t)$ satisfying (\ref{Eq81}) 
with $v_{\delta}(0)=\delta z_m$. \ $v_{\delta}(t)$ satisfies the following integral equations for any $t\in [0,T]$,
\begin{eqnarray*}
v_{\delta}(t) &=& \delta e^{\lambda_m} z_m 
+ \int_0^t e^{(\tau-t)A} F(v_{\delta}(\tau)) d\tau, \\
\partial_t v_{\delta}(t) &=& \lambda_m \delta e^{\lambda_m} z_m 
+e^{tA}F(\delta z_m)
+\int_0^t e^{(\tau-t)A} \partial_{\tau} F(v_{\delta}(\tau)) d\tau.
\end{eqnarray*}
Since we are in the one dimensional case, it is easy to estimate the nonlinear term $F(v)$ as follows. 
\begin{eqnarray*}
\|\partial_t F(v_{\delta}(t))\| &\le & C(\|v_{\delta}(t)\|_{H^1}+\|v_{\delta}(t)\|_{H^1}^{2\sigma}) \|\partial_t v_{\delta}(t)\|,\\ 
\|F(v_{\delta}(t))\| &\le & C_2 
(\|v_{\delta}(t)\|_{H^1}+\|v_{\delta}(t)\|_{H^1}^{2\sigma+1}). 
\end{eqnarray*}
Then, for some $C_0>0$ and for some $T_{\delta}$ when $\delta$ is sufficiently small, we may estimate   
\bee
\|v_{\delta}(t)\|_{H^1} +\|\partial_t v_{\delta}(t)\| \le  2 C_0 \delta e^{\lambda_m t},
\eee
for any $t\in [0,T_{\delta}]$. \ We apply this quantity $\|v_{\delta}(t)\|_{H^1} +\|\partial_t v_{\delta}(t)\|$ as $V_{\delta}(t)$ in Theorem 2 of \cite{CoCoOh}. \ We then repeat their arguments in \cite{CoCoOh} to get $\|v_{\delta}(T_{\delta})\| \ge (\delta/2) \|z_m\|$.
\end{proof}

We complete our whole arguments with a verification of the existence of an eigenvalue $\lambda_m$ satisfying (\ref{Eq82}). \ 
It follows from \cite{G} or Part I of \cite{GSS} that there exists a non-zero real eigenvalue of the linearized operator $A$, if (2) or (3) 
of Proposition \ref{Proposition1} in Section \ref{Sec5} hold. \ Let $\lambda_0$ be the maximal positive eigenvalue. \ Once we have proved the spectral mapping theorem $\sigma(e^{At})=e^{\sigma(A)t}$, the spectral radius of $e^{At}$ is $e^{\lambda_0 t}$. \ Thus we have (\ref{Eq82}) using Lemma 3 of \cite{SS}. \ This implies that we can take $\lambda_0$ as $\lambda_m$ in Proposition \ref{Proposition2}.

The spectral mapping theorem in fact follows from a resolvent estimate in Lemma \ref{Lemma11} below, combined with the arguments in \cite{GJLS}. 

\begin {lemma} \label{Lemma11} 
Let $z=a+i\tau$ with $a,\tau \in \R$ and $a \ne 0$. \ For $|\tau|$ sufficiently large, there exists a constant $C_a>0$, such that
\bee
\|(z-A)^{-1}\|_{\mathcal{L}(\mathbb{L}^2)} \le C_a.
\eee
\end {lemma}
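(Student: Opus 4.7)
The plan is to exploit the fact that in the semiclassical regime the nonlinear part of the linearized operator has exponentially small operator norm; after isolating the skew-adjoint part of $A$, a Neumann series immediately gives the resolvent bound for $z$ at positive distance from the imaginary axis.

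Setting $L_0 = H_\alpha - \lambda$ (self-adjoint on $L^2(\R)$), I would decompose
\begin{equation*}
A = A_0 + V,\qquad
A_0 := \begin{pmatrix} 0 & L_0 \\ -L_0 & 0 \end{pmatrix},\qquad
V := \begin{pmatrix} 0 & -|\phi_{\lambda,\epsilon}|^{2\sigma} \\ (2\sigma+1)|\phi_{\lambda,\epsilon}|^{2\sigma} & 0 \end{pmatrix}.
\end{equation*}
A direct computation shows $A_0^{*} = -A_0$ on $\mathbb{L}^2$, so $A_0$ is skew-adjoint, $\sigma(A_0) \subset i\R$, and hence
\begin{equation*}
\|(z - A_0)^{-1}\|_{\mathcal{L}(\mathbb{L}^2)} \le \frac{1}{|a|}, \qquad z = a + i\tau,\ a \ne 0.
\end{equation*}

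Next I would show that $\|V\|$ is exponentially small. Using the rescaling $\phi_{\lambda,\epsilon} = |\epsilon|^{1/2\sigma}\psi_E$ and the definition (\ref{Eq22}) of $\eta$, together with $c = C_R = \asy(\hbar^{-d\sigma/2})$ from (\ref{Eq30}), one obtains $|\epsilon| = |\eta|\omega/c = \asy(\omega\hbar^{d\sigma/2})$. Combining with the $L^\infty$ bound $\|\psi_E\|_\infty = \asy(\hbar^{-d/4})$ that follows from the semiclassical behavior of $\varphi_{R,L}$ (Remark after Lemma \ref{Lemma1}) and the exponential smallness $\|\psi_c\|_{H^2} = \tilde\asy(e^{-\rho/\hbar})$ from Theorem \ref{Theorem1}, this yields
\begin{equation*}
\||\phi_{\lambda,\epsilon}|^{2\sigma}\|_\infty = |\epsilon|\,\|\psi_E\|_\infty^{2\sigma} = \asy(\omega),
\end{equation*}
and therefore $\|V\|_{\mathcal{L}(\mathbb{L}^2)} \le (2\sigma+1)\||\phi_{\lambda,\epsilon}|^{2\sigma}\|_\infty = \tilde\asy(e^{-\rho/\hbar})$ by the exponential estimate (\ref{Eq17}).

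Finally, fix $a \ne 0$. For $\hbar$ sufficiently small one has $\|V\| \le |a|/2$, hence $\|(z-A_0)^{-1} V\|_{\mathcal{L}(\mathbb{L}^2)} \le 1/2$ uniformly in $\tau \in \R$, and the Neumann series
\begin{equation*}
(z-A)^{-1} = \sum_{n \ge 0} \bigl[(z - A_0)^{-1} V \bigr]^{n}(z - A_0)^{-1}
\end{equation*}
converges in operator norm, yielding
\begin{equation*}
\|(z-A)^{-1}\|_{\mathcal{L}(\mathbb{L}^2)} \le \frac{\|(z-A_0)^{-1}\|}{1 - \|(z-A_0)^{-1}\|\,\|V\|} \le \frac{2}{|a|} =: C_a,
\end{equation*}
which is in fact uniform in all $\tau \in \R$ and thus holds a fortiori for $|\tau|$ arbitrarily large. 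The main obstacle is really the exponential smallness of $V$: justifying $\|V\| = \tilde\asy(e^{-\rho/\hbar})$ relies crucially on the semiclassical scaling of $|\epsilon|$, together with the two-level decomposition of $\psi_E$, the semiclassical $L^\infty$-control of $\varphi_{R,L}$, and the smallness of the correction $\psi_c$ provided by Theorem \ref{Theorem1}. Once this input is available, the remaining argument is just the classical skew-adjoint resolvent bound plus a Neumann series.
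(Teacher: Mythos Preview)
Your argument takes a genuinely different and more elementary route than the paper. The paper writes $z-A=M_z\bigl(Id-M_z^{-1}B_{\lambda,\epsilon}\bigr)$ with $M_z$ built from $H_\alpha$ alone (no $\lambda$ subtracted), and then uses the explicit integral kernel of $(H_\alpha-k^2)^{-1}$ from Remark~\ref{Remark20} to prove that $\|M_z^{-1}\|_{\mathcal L(\mathbb L^2)}\le C_a/(1+|\tau|)$ for large $|\tau|$ (this is a separate Lemma~\ref{Lemma12}). Since $B_{\lambda,\epsilon}$ is only shown to be \emph{bounded} (via $\phi\in H^1(\R)\subset L^\infty(\R)$, with no appeal to semiclassical smallness), it is this $|\tau|$-decay of $M_z^{-1}$ that forces $\|M_z^{-1}B_{\lambda,\epsilon}\|\le 1/2$ and closes the Neumann series. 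Your decomposition instead absorbs $\lambda$ into the unperturbed part so that $A_0$ is skew-adjoint and $\|(z-A_0)^{-1}\|\le 1/|a|$ comes for free, with no resolvent computation for the Dirac Hamiltonian; the smallness that drives the Neumann series is shifted entirely onto $V$, using the semiclassical scaling.

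There is, however, a quantifier issue that leaves a real gap. In the lemma the operator $A$ is fixed: $\hbar$ has already been chosen (small) in the overarching Theorem~\ref{Theorem3}, and the claim is made for \emph{every} $a\ne 0$. Your Neumann series requires $|a|\ge 2\|V\|$, and you are not allowed to shrink $\hbar$ after the fact to enforce this for a given $a$. Hence your proof covers only $|a|\gtrsim\omega$, not the strip $0<|a|\lesssim\omega$. This strip is exactly where the application lives: Lemma~\ref{Lemma11} is fed into the spectral mapping argument of \cite{GJLS} to obtain the growth bound (\ref{Eq82}), and the unstable eigenvalue $\lambda_0$ of $A$ is itself of order $O(\omega)$ (the discrete eigenvalues of $L_\pm$ produced by the two-level reduction are $\omega\mu$ with $\mu=O(1)$). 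Since $\|V\|\sim\omega$ as well, there is no a priori reason why the vertical lines $\Re z=a$ one needs lie in the region $|a|>2\|V\|$ where your bound applies. The paper's argument avoids this completely because it never uses smallness of the potential term, only its boundedness; the smallness in the Neumann series comes from the large-$|\tau|$ decay of the free resolvent, which is available for every fixed $a\ne 0$.
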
 

\begin {proof} [Proof of Lemma \ref {Lemma11}] We begin with some preparations. \ For fixed $z=a+i\tau$ with $a\in \R \setminus \{0\}$ 
and $\tau \in \R$, we write the operator $z-A$ as follows. 
\bee
z-A&=&M_z-B_{\lambda,\epsilon}=
\begin{pmatrix}
z & -H_{\alpha} \\
H_{\alpha} & z
\end{pmatrix}
+
\begin{pmatrix}
0 & \phi^{2\sigma}+\lambda \\ 
-\lambda -(2\sigma+1)\phi^{2\sigma} & 0
\end{pmatrix}
\\
&=&M_z[Id-M_z^{-1} B_{\lambda, \epsilon}]
\eee
Indeed, we see that $z \notin i\R$, therefore, by Remark \ref{Remark20}, the inverse of $H_{\alpha}^2 +z^2 = (H_{\alpha}-iz)(H_{\alpha} +iz)$ exists, thus the inverse of $M_z$ exists too. \ We can express $M_z^{-1}$ as follows. 
\bee
M_z^{-1}=
\begin{pmatrix}
z\{(H_{\alpha})^2+z^2\}^{-1} & H_{\alpha} \{(H_{\alpha})^2 + z^2\}^{-1} \\
-H_{\alpha} \{(H_{\alpha})^2 +z^2\}^{-1} & z \{(H_{\alpha})^2 +z^2\}^{-1} 
\end{pmatrix} 
\eee
We estimate now the inverse $M_z^{-1}$ by means of the following Lemma.

\begin {lemma} \label {Lemma12}
Let $a \ne 0.$ There exist $C_a$, $\tau_0 >0$ such that for any $z=a+i\tau$ with $|\tau| \ge \tau_0$, we have 
\bee
\|M_z^{-1}\|_{\mathcal{L}(\mathbb{L}^2)} \le \frac{C_a}{1+|\tau|}.
\eee
\end {lemma}

\begin {proof}[Proof of Lemma \ref {Lemma12}] We benefit from the explicit resolvent formula of $H_{\alpha}$  in Remark \ref{Remark20}. \ Let
\bee 
f_{\alpha}(x)=([H_{\alpha}-k^2]^{-1} h)(x)
\eee
and consider $k^2=iz=-\tau+i a$. \ First, we remark that  
\bee
f_0(x) = \int_{\R} 
K_0 (x,y;k) h (y) d y 
= \frac {i}{2k} \left ( e^{i k |\cdot |} \star h (\cdot ) \right ) (x)
\eee
may be estimated, by Young inequality, as follows.   
\bee
\|f_0 \| 
= \frac {1}{2 |k|} 
\left \| e^{ik|\cdot |} \star h (\cdot ) \right \|  
\le  \frac {1}{2 |k|} \left \| e^{ik|\cdot |}  \right \|_{L^1}  \| h (\cdot ) \|  
\le  \frac {C}{|k| |\Im k | } \|h \| 
\le \frac {C}{\sqrt {\tau } } \|h \|,  
\eee
since 
\bee
\sqrt {\tau + i a } = \sqrt {\tau } \sqrt {1+ \frac {ia}{\tau}} = \sqrt {\tau} + \frac {ia}{2\sqrt {\tau}} + \asy {\tau^{-3/2}}\, , \mbox { as } |\tau | \to \infty \, , 
\eee
and $\Im \sqrt {\tau + i a } \sim \frac 12 \frac {a}{\sqrt {\tau}}$ for $\tau \gg 1$. 

Next, we set  
\bee
f_\alpha^j (x) = \int_{\R} K_\alpha^j (x,y;k) h (y) d y \, ,\ \ j=1,2,3,4.
\eee
By this definition, $f_{\alpha}= f_0 + \sum_{j=1}^{4} f_{\alpha}^j$. \ Thus we estimate each term $f_{\alpha}^j$. For example,  
\bee
f_{\alpha}^1 (x) 
= \frac {\alpha (2k+i\alpha )}{2k \left ( (2k+i\alpha )^2 + \alpha^2 e^{i4ka} \right ) } 
e^{ik |x+a|} \int_{\R} e^{ik|y+a| } h (y) d y 
\eee
and then, for sufficiently large $|\tau|$,  
\bee
\|f_{\alpha}^1\| 
&\le & \frac {C}{|k|^2 |\Im k|} \left | \int_{\R} e^{ik|y+a| } h (y) d y \right | 
\le  \frac {C}{|k|^2 |\Im k|} \left \| e^{ik|\cdot +a| } h (\cdot ) \right \|_{L^1}   \\
&\le & \frac {C}{|k|^2 |\Im k|} \left \| e^{ik|\cdot +a| } \right \| 
\left \| h \right \| 
\le  \frac {C}{|k|^2 |\Im k|^2}  \left \| h \right \| 
 \le \frac {C}{|\tau|} \| \left \| h \right \|.
\eee
Similarly, the other terms $f_{\alpha}^j$, $j=2,3,4$, are estimated. \ Thus, it follows that for $\Im z = a$ fixed and $\Re z =-\tau $ large enough, then 
\bee
\left \| [H_\alpha - iz ]^{-1} h \right  \| \le \frac {1}{|\tau|} 
\| h \|.
\eee
since $H_\alpha$ is a self-adjoint operator. \ Therefore, decomposing $H_{\alpha}\{(H_{\alpha})^2 + z^2\}^{-1}$ as 
\bee
H_{\alpha}\{(H_{\alpha})^2 + z^2\}^{-1}=(H+iz)^{-1}+iz(H-iz)^{-1}(H+iz)^{-1},
\eee
we also obtain, for large $|\tau| \gg 1$, 
\bee
\|H_{\alpha}\{(H_{\alpha})^2 + z^2\}^{-1}\|_{\mathcal{L}(\mathbb{L}^2)} 
\le \frac{C_a}{1+|\tau|}.
\eee
Similarly, for large $|\tau|$,
\bee
\|z \{(H_{\alpha})^2 + z^2\}^{-1}\|_{\mathcal{L}(\mathbb{L}^2)} 
\le \frac{C_a}{1+|\tau|}. 
\eee
\end {proof}

We go back to the proof of Lemma \ref{Lemma11}. \ We put $T_z= M_z^{-1} B_{\varepsilon, \lambda}$, and we write entries of this operator $T_z$: 
\bee
T_z=
\begin{pmatrix}
H_{\alpha}\{(H_{\alpha})^2+z^2\}^{-1}(-\lambda-(2\sigma+1)\phi^{2\sigma}), 
& z \{(H_{\alpha})^2+z^2\}^{-1} (\phi^{2\sigma}+\lambda) \\
z \{(H_{\alpha})^2+z^2\}^{-1} (-\lambda-(2\sigma+1)\phi^{2\sigma}), 
& -H_{\alpha} \{(H_{\alpha})^2+z^2\}^{-1}(\phi^{2\sigma}+\lambda)
\end{pmatrix}
\eee
Since we are in one dimension, it follows that $\phi \in H^1(\R) \subset L^{\infty}(\R)$, thus we can estimate, for example, as 
\bee
\|H_{\alpha} \{(H_{\alpha})^2+z^2\}^{-1}(\phi^{2\sigma}+\lambda)
\|_{\mathcal{L}(\mathbb{L}^2)} 
\le C \|H_{\alpha}\{(H_{\alpha})^2+z^2\}^{-1}
\|_{\mathcal{L}(\mathbb{L}^2)}.
\eee
Therefore, combining with the above proof for Lemma \ref{Lemma11}, we have that for any $\tau$ with $|\tau|\ge \tau_0$, $\|T_z\|_{\mathcal{L}({\mathbb{L}^2})} \le 1/2$. \ This implies immediately for any $u\in \mathbb{L}^2$ 
\bee
\|(Id-T_z)u\|_{\mathbb{L}^2} \ge \|u\|_{\mathbb{L}^2}-\|T_z u \|_{\mathbb{L}^2} \ge (1/2) \|u\|_{\mathbb{L}^2},
\eee
that is, $Id-T_z$ is invertible for $|\tau| \ge \tau_0$. \ Then, finally, we get that for any $z=a+i\tau$ with $|\tau| \ge \tau_0$, $a \ne 0$, 
\begin{eqnarray*}
\|(z-A)^{-1}\|_{\mathcal{L}(\mathbb{L}^2)}
&=&\|(Id-T_z)^{-1} M_z^{-1}\|_{\mathcal{L}(\mathbb{L}^2)} \\
&\le& \|(Id-T_z)^{-1}\|_{\mathcal{L}(\mathbb{L}^2)} 
\|M_z^{-1}\|_{\mathcal{L}(\mathbb{L}^2)} \le 2C_a.
\end{eqnarray*}
The proof of Lemma \ref {Lemma11} is then completed. 
\end {proof}

Lastly, recall that the assumptions (2) or (3) of Proposition \ref{Proposition1} in Section \ref{Sec5} ensure the existence of a positive real eigenvalue of $A$. \ As we checked in Section \ref{Sec5}, the assumptions (2) or (3) of Proposition \ref{Proposition1} in Section \ref{Sec5} may be verified, for small $\hbar >0$, depending on $\sigma,$ $\eta,$ and the sort of stationary solution. \ Namely, Theorem \ref{Theorem3} in Section \ref{Sec5} is valid for Eq.(\ref{Eq79}).

\end {document}